\newcommand{\SI}{\mathcal{SI}}
\newcommand{\highdeg}{\mathsf{highdeg}}
\newcommand{\cascade}{\cI}
\newcommand{\HDestimate}{\widehat{\mathrm{HD}}}
\newcommand{\hatderivative}{\widehat{dI_\beta}}
\newcommand{\derivative}{dI}
\title{Finding Super-spreaders in Network Cascades}
\author{
Elchanan Mossel\thanks{Department of Mathematics, MIT;
\url{elmos@mit.edu}}
\and 
Anirudh Sridhar\thanks{Department of Mathematics, MIT;
\url{anisri@mit.edu}}
}
\date{\today}
\begin{document}

\maketitle

\begin{abstract}%
  Suppose that a cascade (e.g., an epidemic) spreads on an unknown graph, and only the infection times of vertices are observed. What can be learned about the graph from the infection times caused by multiple distinct cascades? Most of the literature on this topic focuses on the task of recovering the \emph{entire} graph, which requires $\Omega ( \log n)$ cascades for an $n$-vertex bounded degree graph. Here we ask a different question: can the important parts of the graph be estimated from just a few (i.e., constant number) of cascades, even as $n$ grows large?

In this work, we focus on identifying super-spreaders (i.e., high-degree vertices) from infection times caused by a Susceptible-Infected process on a graph. Our first main result shows that vertices of degree greater than $n^{3/4}$ can indeed be estimated from a constant number of cascades. Our algorithm for doing so leverages a novel connection between vertex degrees and the second derivative of the cumulative infection curve. Conversely, we show that estimating vertices of degree smaller than $n^{1/2}$ requires at least $\log(n) / \log \log (n)$ cascades. Surprisingly, this matches (up to $\log \log n$ factors) the number of cascades needed to learn the \emph{entire} graph if it is a tree.%
\end{abstract}

\section{Introduction}

Cascading behaviors in networks are ubiquitous. In human interaction networks, the spread of diseases through local interactions can quickly escalate, causing population-level pandemics \cite{brauer2012mathematical}. Similarly, the structure of social networks plays a fundamental role in the mass dissemination of ideas, such as product adoption or misinformation \cite{leskovec2007dynamics, strang1998diffusion, rogers2003diffusion, Friggeri_Adamic_Eckles_Cheng_2014}. 
In all of these examples, understanding the key factors which facilitate cascades is crucial for the analysis and control of these processes.

The structure of the underlying network plays a fundamental role in the evolution of a cascading process. Consider, for instance, the impact of a high-degree vertex. Once such a vertex is affected by the cascade, they may, in turn, disseminate the cascade's effects on a massive scale to their neighbors. Due to their potential to accelerate the course of a cascade in this manner, high-degree vertices are of special interest in the analysis of spreading processes. In the context of epidemics, quarantining or immunizing high-degree individuals, also known as \emph{super-spreaders}, can greatly mitigate the damage caused by a pandemic \cite{pastor2002immunization, dezso2002halting}. In social networks, firms may want to {leverage} the impact of high-degree vertices, also known as \emph{influencers}, to accelerate the speed of product adoption \cite{hinz2011seeding}. 

A fundamental hurdle in carrying out these ideas in practice is that the underlying network may be partially or fully unknown. In epidemiology, disease-spreading contacts between individuals are often not measured.
In marketing, even if retailers have access to certain social networks, only some of the observed links may be effective in facilitating a cascade, and many links may also be unobserved (e.g., if two individuals communicate through other means).
While it is possible to learn aspects of the network through contact tracing or interviewing, such approaches are quite intensive and time-consuming \cite{spencer2020covid}. To mitigate this issue, we aim to understand the extent to which more efficient \emph{data-driven} approaches can be used to learn about the network. Specifically, we study how network structure can be estimated from the set of ``infection times'' caused by a cascade, which we call the \emph{cascade trace}. Typically, a single cascade trace may not be enough to learn meaningful information, but much more can be said by combining information from multiple different cascades on the same network.

The task of estimating a network from multiple cascade traces, known as structure learning, has received considerable attention in the past decade \cite{gomez2012inferring, du2012learning, NS12_cascades, ACFKP13_trace_complexity, daneshmand2014estimating, khim2018theory, hoffman2019learning, he2020network, hoffman2020learning, wilinski2021prediction}. Almost all the literature on this topic focuses on exactly recovering the \emph{entire} network under sparsity constraints (i.e., bounds on the maximum degree). However, the number of cascade traces required to do so grows as a function of the network size \cite{NS12_cascades, ACFKP13_trace_complexity, daneshmand2014estimating}. This can be prohibitive in applications such as epidemiology where it is costly to allow many cascades to spread. Motivated by this limitation, we ask a different question:
\begin{center}
\label{q:high_degree}
\emph{Can the \underline{important} parts of the network, such as the location of high-degree vertices, \\
be learned from just a few cascade traces? }
\end{center}
We find that the answer is quite subtle. In an $n$-vertex graph, we show that vertices of degree at least $n^{3/4}$ can indeed be correctly estimated from a constant number of cascade traces. Conversely, estimating vertices of degree smaller than $\sqrt{n}$ can be almost as hard as learning the \emph{entire} network, in terms of the number of cascade traces required.

\subsection{Summary of contributions}

In a bit more detail, we assume that each cascade spreads on an $n$-vertex graph $G$ with the following structure: a constant number of (high-degree) vertices have degree at least $n^\alpha$ for some $\alpha \in (0,1)$, and all other vertices have degree at most $n^{o(1)}$. The goal is to understand how many cascade traces are needed to estimate the set of high-degree vertices with probability $1 - o(1)$.

\begin{figure}
    \centering
    \includegraphics[width=0.8\textwidth]{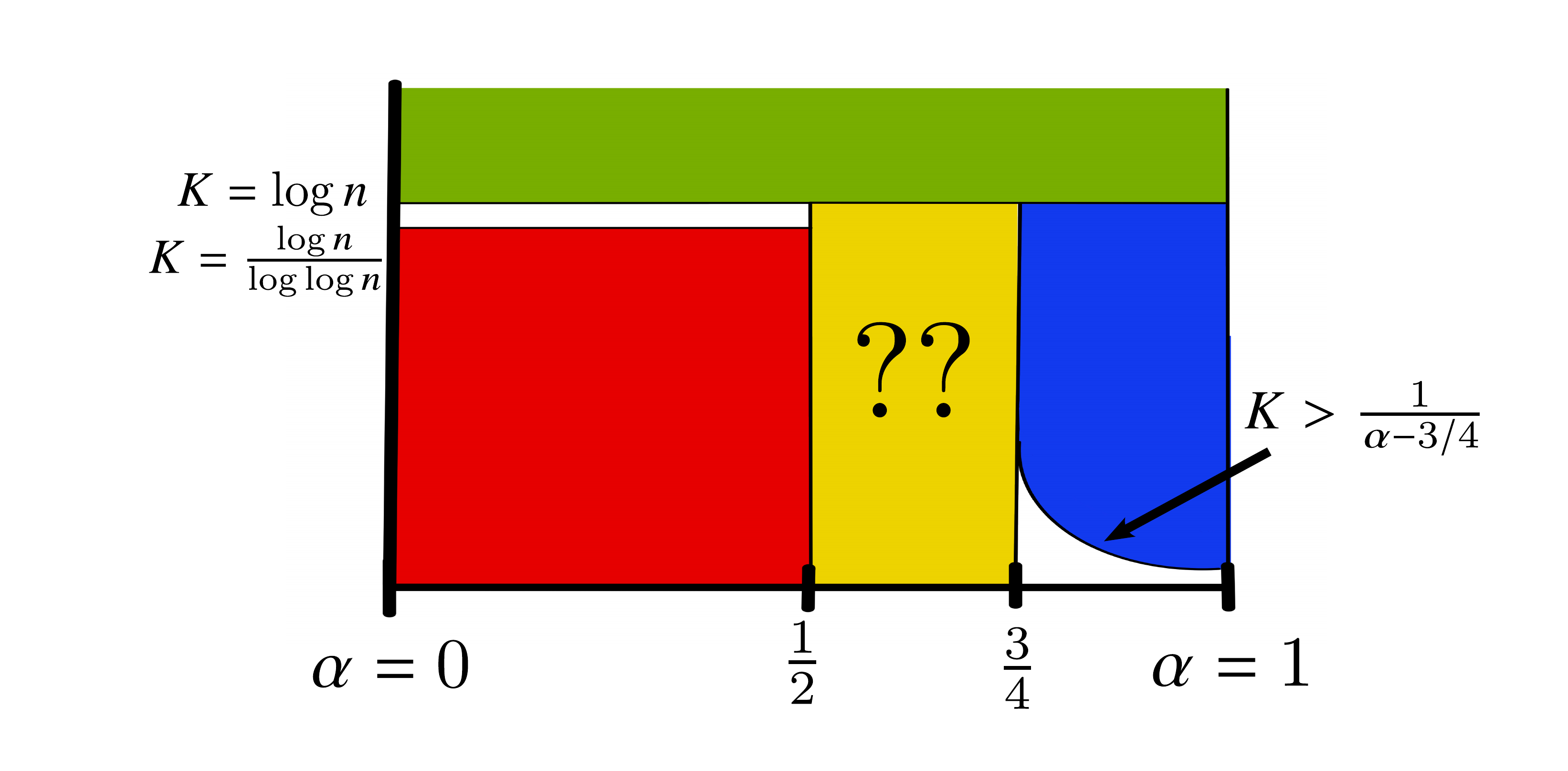}
    \caption{Phase diagram for the possibility and impossibility of estimating high-degree vertices in a graph $G$.
    \emph{Blue region:} High-degree vertices can be estimated from more than $1/(\alpha - 3/4)$ traces.
    \emph{Red region:} Estimating high-degree vertices is impossible, even when $G$ is known to be a tree.
    \emph{Green region:} Full recovery of $G$ is possible if $G$ is a tree, hence estimation of high-degree vertices is also possible.
    \emph{White region:} Regimes with small gap between the bounds provided by our analysis. 
    \emph{Yellow region:} The main open problem left given our work is to determine the sample complexity in this region. 
    }
    \label{fig:phase_diagram}
\end{figure}

Our first main contribution is an estimator for the set of high-degree vertices which requires only a constant number of cascade traces. Our estimator is quite simple, and relies only on properties of the \emph{infection curve} $I(t)$, which counts the number of infections that occur before time $t$. The key idea behind our method is a novel connection between vertex degrees and the second derivative of the infection curve. Specifically, we show that if $v$ becomes infected at time $T(v)$, then a discretized version of the second derivative of $I(t)$ evaluated at time $T(v)$ is a (nearly) unbiased estimator for the degree of $v$. Motivated by this insight, our estimator computes the (appropriately discretized) second derivative of $I(t)$ at $T(v)$ for each $v$, and compares it to a threshold. The set of estimated high-degree vertices is then given by all vertices which pass the threshold in all of the observed cascade traces (see Algorithm \ref{alg:second_derivative} for more details). Our analysis shows that this estimator succeeds provided $\alpha > 3/4$. Notably, our algorithm is a significant departure from existing methods in the literature on learning in graphical models. Indeed, prior algorithms typically focus on learning the precise edges and neighborhoods in the graph \cite{NS12_cascades, ACFKP13_trace_complexity, daneshmand2014estimating, gomez2012inferring, du2012learning, he2020network, wilinski2021prediction}. In contrast, we demonstrate how degree information can be \emph{directly} estimated from just a few traces without needing to infer neighborhoods. 

A natural follow-up question is whether one can identify high-degree vertices of degree much smaller than $n^{3/4}$, while still using only a constant number of traces. We provide a negative answer, showing that when $\alpha \in (0,1/2)$, there exist ``hard'' instances for which $\log (n) / \log \log (n)$ traces are needed to estimate high-degree vertices, no matter the estimator. In other words, it is \emph{information-theoretically impossible} to correctly estimate high-degree vertices in this case. In our proof, we show that estimating high-degree vertices in a particular ensemble of graphs is equivalent to a variant of the \emph{sparse mixture detection problem} \cite{dobrushin2958statistical, burnashev1991problem, ingster1996some, donoho2004higher, cai2014optimal}. Following the approach of Cai and Wu \cite{cai2014optimal}, we solve our version of the sparse mixture detection problem, which leads to a lower bound of $\log (n) / \log \log (n)$ traces for the estimation of high-degree vertices. 
A surprising implication of our impossibility result is that for $\alpha \in (0,1/2)$, estimating high-degree vertices can be nearly as hard as learning the entire network. Indeed, using $\log(n)$ traces, the Tree Reconstruction algorithm of Abrahao, Chierichetti, Kleinberg and Panconesi \cite{ACFKP13_trace_complexity} is able to exactly recover $G$, provided it is a tree. On the other hand, the graphs in our ensemble of hard instances are all trees, hence $\log(n) / \log \log (n)$ traces are required to estimate high-degree vertices even if $G$ is known to be a tree. In summary, if $G$ is known to be a tree, the sample complexity (with respect to the number of cascade traces) of estimating high-degree vertices matches that of learning $G$ \emph{exactly}, up to at most a $\log \log (n)$ factor.  

Together, our two results highlight a curious phase transition in $\alpha$ (see Figure \ref{fig:phase_diagram}). When $\alpha \in (3/4, 1)$, high-degree vertices can be identified using essentially minimal data about the cascade. When $\alpha \in (0,1/2)$, we have the other extreme: learning high-degree vertices is nearly as challenging as learning the entire network, when $G$ is a tree. Our work leaves open the intriguing regime of $\alpha \in [1/2, 3/4]$.

\subsection{Related Work}

The problem of inferring a network from cascade traces was first studied in the context of epidemiology \cite{WT04_epidemic} and information flow in blogs \cite{AA05_blog}. Since then, there has been a large body of theoretical and empirical work on the subject. On the empirical side, Gomez-Rodriguez, Leskovec and Krause \cite{gomez2012inferring} initiated a study of scalable and mathematically principled methods with their NetInf algorithm. Since then, a number of follow-up works have tackled the exact network inference problem using a combination of likelihood-based approaches, convex optimization, message passing and machine learning \cite{daneshmand2014estimating, gomez2012inferring, du2012learning, he2020network, wilinski2021prediction}. In a related vein, there has been significant interest from the machine learning community in the task of learning Hawkes processes from vertex activation times; see \cite{lima2023hawkes} and references therein. We expect that our methods may be useful in this setting as well.

The first results on the sample complexity (i.e., number of traces needed) of exactly recovering the underlying graph were by Netrapalli and Sanghavi \cite{NS12_cascades}, which applied to discrete-time cascades (i.e., the independent cascade model) spreading on bounded-degree graphs. This work was soon followed by Abrahao, Chierichetti, Kleinberg and Panconesi \cite{ACFKP13_trace_complexity}, who studied the sample complexity of exact network inference using a variant of the SI process to model the cascade. Since their work, a number of extensions have been explored, including inference from noisy infection times \cite{hoffman2019learning}, correlated cascades \cite{khim2018theory} and learning mixtures of graphs \cite{hoffman2020learning}. A common technique underlying these is an analysis of maximum likelihood estimator for the underlying network. We note that our algorithm is a significant departure from these methods, as it is able to directly estimate degree information \emph{without} knowledge of the precise topology.

Our work falls under the broader umbrella of literature studying the sample complexity of learning structures in graphical models. Our work deals with observations from \emph{transient} models, though learning from \emph{stationary} models such as Gaussian Graphical Models and Ising models have also received considerable attention 
\cite{ravikumar2011high, meinshausen2006high, misra2020information,wang2010information,cai2016estimating,kelner2020learning, klivans2017learning, wu2019sparse, vuffray2020efficient, santhanam2012information, bresler2015efficiently, bresler2013reconstruction}. While such frameworks are naturally quite different from ours, there are also many qualitative similarities. 
For the task of recovering the precision matrix of a $n$-dimensional Gaussian graphical model, it is known that $\Theta(\log n)$ independent samples are necessary and sufficient under some sparsity conditions \cite{misra2020information,wang2010information,cai2016estimating,kelner2020learning}. Similarly, $\Theta( \log n)$ independent samples are necessary and sufficient for learning bounded-width Ising models \cite{BrMoSl:08, wu2019sparse, vuffray2020efficient, santhanam2012information, bresler2015efficiently} as well as general classes of Markov Random Fields \cite{bresler2013reconstruction}. 
These results parallel the sample complexity of exactly recovering the graph from cascade traces, and many algorithmic approaches are similar in flavor. 

While the work above largely focuses on recovering the underlying network, we mention a few recent papers which also aim to learn the ``important'' parts of the relevant graphical model, though with significantly different interpretations of importance.
Boix-Adser\`a, Bresler and Koehler \cite{boixadsera2021chow} as well as Bresler and Karzand \cite{bresler2020learning} focus on the task of learning tree-structured Ising models with a good prediction accuracy with respect to the ground-truth model.
Eckles, Esfandiari, Mossel and Rahimian \cite{eckles2022seeding} leverage cascade traces from a discrete-time cascade model to perform influence maximization. These works, in conjunction with ours, demonstrate the potential to learn key information about distributions and systems \emph{without} fully learning the underlying model.

\subsection{Definitions and Main Results}
\label{sec:results}

\paragraph{Notation.}
For a set $S$, we let $|S|$ denote its cardinality.
For a graph $G = (V,E)$, we denote the number of vertices by $n : = |V|$ and let $\cN(v)$ denote the neighborhood of a vertex $v$. For a subset of vertices $S \subset V$, we denote $\cut(S) : = |\{ (u,v) \in E : u \in S, v \notin S \} |$. Throughout the paper, we use standard asymptotic notation and all limits are as $n \to \infty$ unless otherwise specified.

\paragraph{The Susceptible-Infected process.}
We model the cascade using the well-known Susceptible-Infected (SI) process, which is a continuous-time Markov process in which susceptible vertices in a graph become infected upon interacting with infected neighbors. 
In more detail, we let $G$ be the graph upon which the cascade spreads.
We denote by $\cascade(t)$ the set of vertices which are infected at a time index $t \ge 0$, and we let $\lambda > 0$ represent the \emph{interaction rate} between individuals. Initially, we assume that only a single vertex $v_0$ is infected; that is, $\cascade(0) = \{ v_0 \}$. 
Given $\cascade(t)$, the cascade evolves as follows: for $v \in V \setminus \cascade(t)$, it holds that as $\epsilon \to 0$,
\begin{equation}
\label{eq:SI}
\p \left( \left. v \in \cascade(t + \epsilon) \right \vert \cF_t \right) = \epsilon \lambda | \cN(v) \cap \cascade(t) | + o(\epsilon),
\end{equation}
where $\{ \cF_t \}_{t \ge 0}$ is the natural filtration corresponding to the stochastic evolution of the cascade and $o(\epsilon) \to 0$ faster than $\epsilon \to 0$.
In words, the rate at which a susceptible vertex becomes infected is proportional to the number of its infected neighbors and the interaction rate $\lambda$. Given the evolution of the cascade $\cascade(t)$, a few important quantities include the \emph{infection curve} $I(t) : = | \cascade (t) |$, the set $\cascade[a,b]$ for $0 \le a \le b$, which denotes the set of infections occurring in the interval $[a,b]$, and $I[a,b] : = | \cascade[a,b]|$.

We remark that the SI process \eqref{eq:SI} is closely related to a number of other well-studied models. It is known to be equivalent to First Passage Percolation with $\mathrm{Exp}(\lambda)$ edge weights, a popular model in mathematical physics \cite{fpp, fpp_si_equivalence}. It is also equivalent to the Contact Process without recovery \cite{contact_process, markovian_contact_processes}, and has been used as a rigorous foundation for standard population-based epidemiological models \cite[Chapter 9]{brauer2012mathematical}. 
In recent years, a large body of work has also investigated generalizations of \eqref{eq:SI} with edge-dependent interaction rates (e.g., due to mask-wearing tendencies, multiple viral strains); see, e.g., \cite{allard2009heterogenous, alexander2010risk, eletreby2020effects}.

For a given vertex $v \in V$, we formally define its \emph{infection time} in the SI process to be
\[
T(v) : = \inf \{ t \ge 0 : v \in \cascade(t) \}.
\]
We call the collection of infection times and associated vertices $\mathbf{T} := \{(v, T(v) )\}_{v \in V}$ the \emph{cascade trace}.

\paragraph{Learning from cascade traces.} 
Suppose that the underlying graph $G$ is unknown, but we observe multiple independent cascade traces on $G$, given formally by
\begin{equation}
\label{eq:trace_distributions}
\left( \mathbf{T}_1, \ldots, \mathbf{T}_K \right) \sim \SI(G, v_{0,1}) \otimes \ldots \otimes \SI(G, v_{0,K}),
\end{equation}
where $v_{0,i}$ is the source vertex for the $i$th cascade. For brevity, for a graph $G$ and collection of source vertices $v_0 : = ( v_{0,1}, \ldots, v_{0,K})$, we denote $\p_{G, v_0}$ to be the probability measure corresponding to \eqref{eq:trace_distributions}.
Broadly, we are interested in the following question of structural inference: \emph{What can be learned about $G$ from $K$ cascade traces?}

While this has been a question of significant interest in the past decade, the existing literature on the topic almost exclusively focuses on \emph{exactly} recovering $G$. In the context of cascade traces, this task was first studied by Netrapalli and Sanghavi \cite{NS12_cascades} for a discrete-time cascade model. In particular, they proved that for $n$-vertex graphs $G$ with maximum degree at most $\Delta$, $K = \poly(\Delta) \log n$ is necessary and sufficient to exactly recover $G$. Abrahao, Chierichetti, Kleinberg and Panconesi \cite{ACFKP13_trace_complexity} studied the exact recovery problem for a slightly more general version of the SI process \eqref{eq:SI}, showing that $K = \poly(\Delta) \log(n)$ suffices for general graphs (provided $\Delta$ does not scale as a function of $n$), and that $K = \Theta ( \log n)$ traces suffices if $G$ is a tree. 

A key takeaway from this literature on exact recovery of $G$ is that the number of cascade traces $K$ must grow as a function of the network size $n$. However, this requirement may be prohibitive in settings such as epidemiology where one must make decisions based on a relatively small amount of data. In light of this issue, we study whether it is possible to learn the \emph{important} parts of a network -- such as the location of high-degree vertices -- from just a few cascade traces. To the best of our knowledge, this angle has not been previously investigated.

\paragraph{Estimating high-degree vertices.} To formalize the task of estimating high-degree vertices, we assume that most of the vertices in $G$ have a relatively small degree, except for a constant number of vertices with substantially larger degree. The latter type is the set of high-degree vertices we wish to estimate. We mathematically capture this structural property with the class of graphs $\cG(n,m,d,D)$, described below. 

\begin{definition}
\label{def:G}
We say that $G \in \cG(n,m,d,D)$ if and only if (1) $G$ is a connected graph on $n$ vertices, (2) there are at most $m$ vertices of degree at least $D$, and (3) all other vertices have degree at most $d$. We denote the set of high-degree vertices in $G \in \cG(n,m,d,D)$ to be $\highdeg(G) : = \{ v \in V : \deg(v) \ge D \}$.
\end{definition}

We remark that the assumption of connectivity ensures that all vertices are eventually infected, which is in general necessary for the estimation of high-degree vertices. Indeed, if a high-degree vertex never becomes infected, it is impossible to estimate it.
Next, we assume the following about the scaling of the parameters $m, d,D$ with respect to $n$.

\begin{assumption}
\label{as:G_conditions}
We assume that $m$ is constant, $d = n^{o(1)}$ and $D = n^\alpha$ for some $\alpha \in (0,1)$.
\end{assumption}

We remark that this assumption is quite different from the usual assumption made in the literature on structure learning. Indeed, in much of the work on exact recovery of $G$, it is assumed that its maximum degree is bounded as $n \to \infty$ \cite{NS12_cascades, ACFKP13_trace_complexity, daneshmand2014estimating}. A notable exception is the Tree Reconstruction algorithm of Abrahao, Chierichetti, Kleinberg, and Panconesi \cite{ACFKP13_trace_complexity}, which has provable guarantees for \emph{any} tree with a sufficiently large number of vertices. However, as we are only interested in estimating high-degree vertices rather than specific connections in the network, we may take significantly more relaxed assumptions compared to the bulk of the structure learning literature. In the remainder of the paper, when referencing any graph $G$ we will assume that $G \in \cG(n,m,d,D)$ and that Assumption \ref{as:G_conditions} holds.

Our first main result shows that we can correctly identify all vertices in $G$ of degree larger than $n^{3/4}$, using a \emph{constant} number of cascade traces -- far less than what is needed to learn edges in the graph.

\begin{theorem}
\label{thm:3/4}
Suppose that $\alpha \in (3/4, 1)$ and that $D = n^\alpha$. If $K$ satisfies
\[
K > \frac{1}{\alpha - 3/4},
\]
then there is an estimator $\HDestimate$ such that for any $G \in \cG(n,m,d,D)$ and any collection of source vertices $v_0 = (v_{0,1}, \ldots, v_{0,K})$,
\[
 \p_{G, v_0} \left( \HDestimate \left( \mathbf{T}_1, \ldots, \mathbf{T}_K \right) = \highdeg(G) \right ) = 1 - o(1),
\]
where $o(1) \to 0$ as $n \to \infty$.
\end{theorem}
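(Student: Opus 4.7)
The plan is to analyze the estimator $\HDestimate$ sketched in the introduction, which for each trace $i \in [K]$ computes a discretized second-derivative score $s_i(v)$ of the infection curve $I_i(t)$ at $T_i(v)$ and accepts $v$ iff $s_i(v) \ge \tau$ for every $i$. Concretely, set $\delta := n^{-\beta}$ with $\beta := \alpha - 3/4$, define $D_i(t) := I_i(t+\delta) - I_i(t)$, and take
\[
s_i(v) := D_i(T_i(v)) - D_i\bigl(T_i(v) - \delta\bigr), \qquad \tau := \tfrac{1}{2}\lambda n^{3/4}.
\]
The entire analysis hinges on the identity that the right- and left-limits of $\derivative_i/dt$ at $T_i(v)$ differ by exactly $\lambda(\deg(v) - 2k_v^{(i)})$, where $k_v^{(i)} := |\cN(v) \cap \cascade(T_i(v)^-)|$ counts $v$'s neighbors infected just before $v$. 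This follows from a one-line bookkeeping argument on infected-susceptible edges.

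For the true-positive analysis, fix a high-degree $v$ with $\deg(v) \ge n^\alpha$. I would first show that $k_v^{(i)} = o(n^\alpha)$ with high probability: once $k$ of $v$'s neighbors are infected, $v$'s own infection rate is at least $\lambda k$, so coupling with a rate-$\lambda k$ exponential clock and using $d = n^{o(1)}$ forces $v$ to be infected before too many of its neighbors can be. Thus the conditional mean of $s_i(v)$ is $(1+o(1))\lambda \delta n^\alpha = (1+o(1))\lambda n^{3/4}$. Bernstein concentration applied to the counting process of infections in each window of width $\delta$ bounds the fluctuations of $s_i(v)$ by $O(\sqrt{\lambda \delta \cdot E_{\max}})$, where $E_{\max} \le n^{1+o(1)}$ bounds the total number of infected-susceptible edges; substituting $\delta = n^{3/4 - \alpha}$ gives fluctuations $O(\sqrt{\lambda}\, n^{7/8 - \alpha/2 + o(1)})$, which is $o(\lambda n^{3/4})$ when $\alpha > 3/4$. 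Hence $s_i(v) \ge \tau$ with probability $1 - o(1)$, and a union bound over the $m = O(1)$ high-degree vertices and $K$ traces yields $o(1)$ probability of missing any of them.

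The principal obstacle is the false-positive analysis for a fixed low-degree vertex $v$. The event $\{s_i(v) \ge \tau\}$ splits, up to Bernstein-negligible terms, into the event that some high-degree $\hdv$ activates within time $\delta$ of $T_i(v)$, in which case the $\approx n^\alpha$ post-$\hdv$ infection burst contaminates one window but not the other; outside this event both windows only see the $n^{o(1)}$-rate background process and concentration forces $s_i(v) \ll \tau$. To control the overlap event, observe that conditional on the cascade's history, $T_i(v)$ is the first arrival of a time-inhomogeneous Poisson process with rate $\lambda \cdot |\cN(v) \cap \cascade(t)| \le \lambda d = \lambda n^{o(1)}$. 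Its density is therefore deterministically bounded by $\lambda d$, so the probability that $T_i(v)$ falls within $\delta$ of $T_i(\hdv)$ is at most $2\lambda d \delta = O(n^{-(\alpha - 3/4) + o(1)})$ per high-degree vertex. Summing over the $m = O(1)$ high-degree vertices, the single-trace false-positive rate per low-degree vertex is $O(n^{-(\alpha - 3/4) + o(1)})$.

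By independence of the $K$ traces, the probability that a fixed low-degree $v$ is accepted in every trace is $O(n^{-K(\alpha - 3/4) + o(1)})$, so a union bound over the $\le n$ low-degree vertices gives total false-positive probability $O(n^{1 - K(\alpha - 3/4) + o(1)}) = o(1)$ precisely when $K > 1/(\alpha - 3/4)$. Combined with the true-positive bound, this proves the theorem. The delicate step is the per-trace false-positive bound: formally justifying the ``density of $T_i(v)$'' argument requires conditioning on a suitable stopping-time $\sigma$-algebra capturing the cascade up to $\hdv$'s activation, or equivalently coupling with an independent $\mathrm{Exp}(\lambda d)$ clock for $v$, and some care is needed to control the drift in the two discretization windows contributed by infections other than $\hdv$'s.
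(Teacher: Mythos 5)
Your overall architecture matches the paper's: threshold the discretized second derivative of the infection curve at each vertex's infection time, require the threshold to be met in every trace, and bound false positives via a union bound over $n$ vertices raised to the $K$-th power. Your true-positive sketch (control $k_v^{(i)}$, concentration via a Poisson/Bernstein bound on window counts) is also in the right spirit. However, there is a genuine gap in the false-positive analysis, and it propagates into an incorrect parameter choice.

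The problematic step is the claim that the per-trace probability that $T_i(v)$ falls within $\delta$ of $T_i(\hdv)$ is at most $2\lambda d\delta$, justified by saying the ``density'' of $T_i(v)$ is deterministically bounded by $\lambda d$. This only handles the direction in which $\hdv$ is infected first and then the low-degree $v$ catches up: conditioned on $\cF_{T(\hdv)}$, the hazard rate of $v$ is $\lambda|\cN(v)\cap\cascade(t)|\le\lambda d$, so $\p(T(v)-T(\hdv)\in[0,\delta]\mid\cF_{T(\hdv)})\le\lambda d\delta$. But the other direction is the dominant one and is missed: conditioned on $\cF_{T(v)}$ with $T(v)<T(\hdv)$, the rate at which $\hdv$ becomes infected is $\lambda|\cN(\hdv)\cap\cascade(t)|$, which is \emph{not} bounded by $\lambda d$ since $\hdv$ has $n^\alpha$ neighbors and many of them may already be infected. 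The paper controls this via the event $\cA$ (Lemma \ref{lemma:A}, Definition \ref{def:A}), which guarantees $|\cN(\hdv)\cap\cascade(T(\hdv))|\le 2d\sqrt{n}\log^2 n$ with high probability; this $\sqrt{n}$ factor is essentially unavoidable (a high-degree vertex typically accumulates $\Theta(\sqrt{n})$ infected neighbors before its own infection). The resulting temporal-separation bound (Lemma \ref{lemma:temporal_separation}) is $O(d\lambda\delta\sqrt{n}\log^2 n)$, a full $\sqrt{n}$ factor larger than what you wrote, and additionally requires accounting for paths of length two from $\hdv$ into the current cascade, not just direct edges.

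Because of the missing $\sqrt{n}$ factor, your choice $\delta=n^{-(\alpha-3/4)}$ is too large. Under the correct bound, the per-trace overlap probability becomes $O(n^{1/2-(\alpha-3/4)+o(1)})=O(n^{5/4-\alpha+o(1)})$, which is $\omega(1)$ for every admissible $\alpha<1$; the union-bound calculation then never closes. The right trade-off is to take $\delta=n^{-(\alpha-1/4)}$ (i.e., $\beta=\alpha-1/4>1/2$ precisely when $\alpha>3/4$), giving per-trace overlap probability $O(n^{3/4-\alpha+o(1)})$, which after raising to the $K$-th power and union-bounding over $\le n$ low-degree vertices yields $o(1)$ exactly when $K>1/(\alpha-3/4)$. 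One also needs to re-check the concentration of the windowed counts under the smaller $\delta$; this still closes because $2\gamma-(1+\beta)>0$ is still achievable (see Lemma \ref{lemma:second_derivative_approximation}), but it is not automatic from your Bernstein estimate as written. A secondary issue is that your threshold $\tau=\tfrac12\lambda n^{3/4}$ uses $\lambda$; the paper deliberately uses $\tau=n^\alpha/\log n$ so that the estimator does not require knowledge of $\lambda$, though if $\lambda$ is treated as known this is not a gap in the proof.
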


Though Theorem \ref{thm:3/4} shows that a constant number of traces suffices for any $\alpha > 3/4$, $K$ grows as $\alpha$ approaches $3/4$. While this condition is needed in our analysis to handle worst-case examples of $G$, we remark that in practice $K$ could be smaller and may not explode as $\alpha$ approaches $3/4$.
To prove the theorem, we develop a novel algorithm for estimating high-degree vertices from a constant number of cascade traces (Algorithm \ref{alg:second_derivative}). A key supporting result underlying the theorem and Algorithm \ref{alg:second_derivative} is that the time at which a high-degree vertex is infected can be accurately estimated from just a \emph{single} cascade. See Section \ref{sec:possibility} for more details.

Our second main contribution is a negative result, showing that \emph{any} estimator requires at least $\log(n) / \log \log (n)$ traces to successfully identify high-degree vertices of degree smaller than $\sqrt{n}$.

\begin{theorem}
\label{thm:impossibility}
Fix $\epsilon > 0$, let $\alpha \in (0,1/2)$. Suppose that $D = n^\alpha$, $d \ge \log^2 n$ and $m = 1$. Additionally, assume that 
\begin{equation}
\label{eq:K_impossible}
K \le \left( \frac{1 - 2 \alpha - \epsilon}{5} \right) \frac{\log n}{\log \log n}.
\end{equation}
Then there exists an ensemble $\cH \subset \cG(n,m,d,D)$ and a probability distribution $\mu$ over graphs in $\cH$ and source vertices $v_0 = ( v_{0,1}, \ldots, v_{0,K})$ such that, for any estimator $\mathrm{HD}$,
\[
\p_{G,v_0 \sim \mu} \left( {\mathrm{HD}}(\mathbf{T}_1, \ldots, \mathbf{T}_K) = \highdeg(G) \right) = o(1).
\]
\end{theorem}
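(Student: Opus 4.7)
The plan is to construct an ensemble $\mathcal{H} \subset \cG(n,1,d,D)$ of trees in which the single high-degree vertex is hidden at one of $N = n^{1-o(1)}$ statistically symmetric candidate positions, and then apply an information-theoretic argument in the style of Cai and Wu \cite{cai2014optimal} to show that the bound \eqref{eq:K_impossible} on $K$ is incompatible with identifying that position. Concretely, I would take a balanced backbone tree $T_0$ of maximum degree $d = \log^2 n$ rooted at a fixed source $v_0$, select a collection $\mathcal{V} = \{v_1, \ldots, v_N\}$ of vertices at a common depth from $v_0$, and for each $v \in \mathcal{V}$ let $G_v$ be obtained by attaching $n^\alpha$ fresh leaves to $v$. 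The prior $\mu$ is uniform over $\{G_v : v \in \mathcal{V}\}$ paired with $K$ copies of $v_0$.

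Once the ensemble is fixed, identifying the high-degree vertex is essentially a sparse mixture detection problem. Letting $Q_v$ be the law of a single cascade trace on $G_v$ and $Q_0$ a reference null law (say, cascades on a tree where the $n^\alpha$-leaf star is attached to a fixed auxiliary vertex outside $\mathcal{V}$, so that $Q_0$ matches every $Q_v$ in its global summary statistics), a Le Cam / Fano argument shows that no estimator can locate $\highdeg(G)$ with probability $\Omega(1)$ under $\mu$ as long as the mixture $\chi^2$-divergence is $o(1)$. By the i.i.d.\ structure of the $K$ traces,
\[
\chi^2\Bigl( \tfrac{1}{N} \sum_{v \in \mathcal{V}} Q_v^{\otimes K} \;\Big\|\; Q_0^{\otimes K} \Bigr) = \frac{1}{N^2} \sum_{u,v \in \mathcal{V}} \left( E_{Q_0}\!\left[ \frac{dQ_u}{dQ_0} \cdot \frac{dQ_v}{dQ_0} \right] \right)^{\!K} \!- 1,
\]
so it suffices to control the per-trace inner products on the right-hand side.

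For the diagonal terms ($u = v$), I would restrict attention to a sufficient statistic supported in a short time window $W_v$ around the expected infection time of $v$; outside of $W_v$ the two laws agree up to negligible error since a single high-degree vertex only influences its immediate spatial-temporal neighborhood. Within $W_v$, the infection point process can be compared directly: under $Q_v$ there is a Poisson-like burst of $n^\alpha$ infections, while under $Q_0$ the local intensity is governed by the degree $d = \log^2 n$ of the backbone. A direct calculation should yield a per-trace bound of the form $\log\bigl(1 + \chi^2(Q_v \| Q_0)\bigr) = O(\log \log n)$. For off-diagonal terms with $u$ and $v$ far apart in $T_0$, locality of the SI process makes the windows $W_u$ and $W_v$ essentially disjoint, so the cross term factors as a product of two expectations each equal to $1$; the remaining nearby pairs are few enough to be absorbed into the diagonal bound. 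Together these estimates give $\chi^2 \le N^{-1}\exp\bigl(O(K \log \log n)\bigr) + o(1)$, which is $o(1)$ under \eqref{eq:K_impossible} and $N = n^{1-o(1)}$.

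The hardest part is the per-trace $\chi^2$ calculation, since the SI process is a correlated continuous-time distribution rather than a product measure, and the likelihood ratio $dQ_v/dQ_0$ depends on both the infection time of $v$ and the fine temporal structure of its burst. Handling this cleanly requires approximating both $Q_v$ and $Q_0$ on $W_v$ by tractable compound-Poisson laws and carefully tracking their normalization. The constraint $\alpha < 1/2$ enters at the final balancing: it is precisely the regime in which $n^{1-o(1)}$ symmetric candidate positions can coexist with an $n^\alpha$-sized signal while keeping the sparse mixture subcritical, and the constant $1/5$ in \eqref{eq:K_impossible} reflects the resulting trade-off between window width, burst size, and number of candidates.
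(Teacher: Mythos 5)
Your construction differs fundamentally from the paper's, and the step that breaks is the per-trace $\chi^2$ bound: for your deterministic planting, $\chi^2(Q_v\|Q_0)$ is not $\mathrm{polylog}(n)$ --- it is in fact infinite. Under $Q_0$ the $n^\alpha$ fresh star leaves are i.i.d.\ exponential offsets from $T(a)$, while under $Q_v$ they are offsets from $T(v)$; conditioning on $T(a)$ and $T(v)$, the likelihood ratio on the leaves' times contains the factor $\exp\{n^\alpha\lambda(T(v)-T(a))\}\prod_i\mathbf{1}(t_i\ge T(v))$. The second moment $E_{Q_0}[(dQ_v/dQ_0)^2]$ therefore picks up a term of the form $E\bigl[\exp\{n^\alpha\lambda(T(v)-T(a))\}\,\mathbf{1}(T(v)>T(a))\bigr]$, and since the upper tail of $T(v)$ is exponential with rate $\lambda$ while the exponential tilt is by $n^\alpha\lambda\gg\lambda$, this moment diverges. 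The flaw is visible from afar: your claimed bound $\chi^2\le N^{-1}\exp(O(K\log\log n))+o(1)$ has no dependence on $\alpha$, so if correct it would rule out constant-$K$ estimation for \emph{all} $\alpha\in(0,1)$, contradicting Theorem~\ref{thm:3/4}. Restricting to a ``sufficient statistic'' in a short window does not rescue this: a truly sufficient statistic has the same $\chi^2$, while an insufficient one only lower-bounds the full $\chi^2$, which is the wrong direction for impossibility. The deeper issue is that a deterministic $n^\alpha$-leaf star makes $Q_v$ and $Q_0$ essentially mutually singular already from a single trace --- one can simply read off where the burst is --- so no $\chi^2$ argument against a fixed reference can make them look close.

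The paper's construction sidesteps this in two essential ways. First, the planting is \emph{randomized and sparse}: $N\approx n$ auxiliary vertices are added, each independently attaching to a uniformly random leaf of a base tree $H$; under the alternative, each instead attaches to $v$ only with probability $D/N$. The observed data is thus a genuine sparse mixture over the $N$ coordinates, with per-coordinate densities $f_\emptyset$ versus $(1-D/N)f_\emptyset+(D/N)f_v$. Crucially $f_\emptyset=\tfrac{1}{N}\sum_{w\in L}f_w$ already includes $\tfrac{1}{N}f_v$, so $f_v/f_\emptyset\le N$ always, and after more work $f_v/f_\emptyset\le\mathrm{polylog}(n)$ with high probability (Lemma~\ref{lemma:likelihood_ratio}) --- the key bounded-likelihood-ratio step your construction cannot provide. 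Second, the $\chi^2$ is tensorized over the $N$ conditionally i.i.d.\ coordinates rather than over the $K$ cascades, yielding a per-coordinate $\chi^2$ of order $(D/N)^2\cdot\mathrm{polylog}(n)^K$; demanding this be $o(1/N)$ is precisely where the factor $1-2\alpha$ enters and produces~\eqref{eq:K_impossible}. Finally, the near-independence of the attachment leaves is engineered by hanging disjoint length-$\log^5 n$ paths off a small-diameter backbone, so their infection times are approximately i.i.d.\ Gamma; your balanced tree with candidates at a common depth lacks this decoupling, so even a repaired version of your cross-term estimates would face substantial correlations.
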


An important implication of the theorem is that if $K$ is smaller than $\log(n) / \log \log (n)$, no estimator can successfully identify high-degree vertices for every graph in the class $\cG(n,m,d,D)$. 
The ensemble $\cH$ used in the theorem is a collection of trees with a particular structure; see Sections \ref{sec:impossibility} and \ref{sec:impossibility_proof} for more details. This observation, in the context of prior work on the sample complexity of exact recovery of $G$ \cite{ACFKP13_trace_complexity}, shows that learning high-degree vertices when $\alpha \in (0,1/2)$ is \emph{nearly} as hard as learning the full graph in the case of trees. To emphasize this point, we rephrase the result of \cite{ACFKP13_trace_complexity} below.

\begin{theorem}
\label{thm:exact_recovery}
Suppose that $K \ge C \log n$, for some universal constant $C > 0$. Then there exists an estimator $\widehat{G}$ such that for any collection of source vertices $v_0 = (v_{0,1}, \ldots, v_{0,K})$ and any tree $G$, it holds that $\p_{G, v_0} ( \widehat{G} = G) = 1 - o(1)$.
\end{theorem}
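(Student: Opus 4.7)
Since this theorem restates the tree reconstruction result of Abrahao, Chierichetti, Kleinberg and Panconesi \cite{ACFKP13_trace_complexity}, my plan is to reconstruct their approach. The estimator $\widehat{G}$ will, for each pair $(u,v)$ of vertices, perform a hypothesis test deciding whether $(u,v)$ is an edge of $G$, and then return the graph formed by the accepted pairs. Since $G$ is a tree on $n$ vertices, there are at most $\binom{n}{2}$ pairwise decisions to make, so it suffices to make each one with error probability at most $1/n^3$, after which a union bound delivers the claimed overall success probability $1 - o(1)$.

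The structural fact powering each pairwise test is that on a tree, the infection in any cascade must travel along the unique tree path from source to each vertex. Consequently, for a true edge $(u,v)$ with $u$ the earlier infected vertex in cascade $i$, the gap $T_i(v) - T_i(u)$ is exactly $\mathrm{Exp}(\lambda)$ and independent of all other waiting times in the cascade; for a non-edge pair at tree distance $\ell \geq 2$ with $u$ on the path from source to $v$, the analogous gap is $\mathrm{Gamma}(\ell, \lambda)$. Aggregating over $K$ independent cascades, one designs a likelihood-ratio or distance-like statistic whose mean separates the edge from the non-edge hypothesis by a constant and whose fluctuations are sub-exponential. Standard concentration (Chernoff bounds for sums of exponentials) then yields the $1/n^3$ per-pair error bound provided $K \geq C \log n$ for a sufficiently large constant $C$.

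The main obstacle is that some non-edge configurations are locally indistinguishable from edges at the level of pairwise marginals. The sharpest example is a ``sibling'' pair $(u,v)$ that shares a common infector $w$: one can verify that $T(u) - T(w)$ and $T(v) - T(w)$ are i.i.d.\ $\mathrm{Exp}(\lambda)$, hence $|T(u) - T(v)|$ is itself distributed as $\mathrm{Exp}(\lambda)$, exactly matching the marginal distribution for a true edge. Breaking this degeneracy requires the test to use information beyond the pair itself: one natural route is to search, among all candidate intermediate vertices $w$, for one whose infection time is consistently both below and close to $\min(T_i(u), T_i(v))$ across cascades, which is evidence that $w$ is a common ancestor; equivalently, a likelihood-ratio statistic over triples $(u,v,w)$ discriminates the ``chain'' configuration $u - w - v$ (or its reverse) from the ``fork'' configuration $u \leftarrow w \rightarrow v$. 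Carrying out this separation uniformly over all $\binom{n}{2}$ pairs while keeping the sample complexity at $O(\log n)$ is the technical core of the argument and the step I expect to be the main grind.
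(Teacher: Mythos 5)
The paper does not prove this theorem. Theorem \ref{thm:exact_recovery} is explicitly presented as a restatement of the tree-reconstruction result of Abrahao, Chierichetti, Kleinberg and Panconesi \cite{ACFKP13_trace_complexity}, included only to contextualize the $\log\log n$ gap between Theorems \ref{thm:impossibility} and \ref{thm:exact_recovery}; the ``proof'' in the paper is a citation. You have instead set out to reconstruct the cited argument from scratch, which is a strictly harder task than what is required here.

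Your sketch does correctly identify the main structural facts (exponential inter-infection times along tree paths, $\mathrm{Gamma}(\ell,\lambda)$ gaps at distance $\ell$) and, crucially, the central obstruction: for siblings $u,v$ with common parent $w$, the gap $|T(u)-T(v)|$ has the same $\mathrm{Exp}(\lambda)$ marginal as for a true edge, so no pairwise marginal test can work. Your proposed remedy via a statistic over triples $(u,v,w)$ is the right kind of idea, but it is left as a sketch, and it is precisely the load-bearing step. Two concrete gaps remain. First, the theorem quantifies over arbitrary (adversarial) source vertices $v_{0,1},\ldots,v_{0,K}$, so the most natural triple test --- check whether the \emph{sign} of $T_i(v)-T_i(u)$ is consistent across cascades, which does distinguish the chain from the fork when the source is fixed --- breaks if the adversary places sources on opposite sides of the edge $(u,v)$ in different cascades; your sketch does not address worst-case sources. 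Second, the union bound over $\binom{n}{2}$ pairs (or $\Theta(n^3)$ triples) requires per-test failure probability $n^{-\Theta(1)}$ from $K=O(\log n)$ samples, and establishing this for whichever triple statistic resolves the degeneracy is where the constant $C$ comes from; this is asserted, not proved. None of this is a defect of the paper --- it simply cites the result --- but your reconstruction stops short of a complete argument at exactly the step you flag as the grind.
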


Together, Theorems \ref{thm:impossibility} and \ref{thm:exact_recovery} show that there is at most a $\log \log n$ multiplicative gap between the sample complexity of estimating high-degree vertices for $\alpha \in (0,1/2)$ and that of fully recovering $G$, in the case where $G$ is a tree.

\subsection{Discussion and future work}
\label{sec:conclusion}

In this work, we study the sample complexity of estimating super-spreaders (i.e., high-degree vertices) in network cascades, using only the infection times of vertices across multiple cascades. This is the first work to consider this task, to the best of our knowledge.
Our results show that estimating vertices with degree at least $n^{3/4}$ can be done using a constant number of cascades, while estimating vertices with degree at most $n^{1/2}$ requires almost as many cascades as learning the entire graph (if the graph is a tree). We discuss several important avenues for future work below.

\begin{itemize}
    \item {\bf The regime $\alpha \in [1/2, 3/4]$.} In terms of sample complexity, our results show that the regime $\alpha \in (3/4, 1)$ is ``easy'', i.e., the number of traces needed is minimal. On the other hand, $\alpha \in (0,1/2)$ captures a ``hard'' regime, where the sample complexity nearly matches that of learning the entire graph exactly. What is the right sample complexity for $\alpha \in [1/2, 3/4]$? Is it in the easy regime, the hard regime, or something in between?

    \item {\bf Estimation in general graphs when $\alpha \in (0,1/2)$.} The Tree Reconstruction algorithm of \cite{ACFKP13_trace_complexity} provides a means of estimating high-degree vertices when the graph is a tree. What about estimating high-degree vertices for general graphs in $\cG(n,m,d,D)$?

    \item {\bf Noisy observations.} In practice, the infection times may be noisy due to delays in reporting or due to quantization (e.g., positive cases being consolidated on a day-by-day basis). To what extent can our methods be adapted to handle these more realistic scenarios?

    \item {\bf Other motifs.} A high-degree vertex can be viewed as a particular type of motif. What is the sample complexity of detecting and estimating other types of motifs (e.g., cliques)?

    \item {\bf Other types of graphical models.} Our methods fundamentally leverage the temporal nature of the observed data. How can high-degree vertices be estimated in stationary graphical models, such as Gaussian graphical models or Ising models?
    
\end{itemize}

 \section{Technical overview}
 \label{sec:overview}

 In this section, we provide a detailed overview of our methods and proof techniques. Section \ref{sec:possibility} motivates our algorithm for estimating high-degree vertices in the regime $\alpha \in (3/4, 1)$ (Algorithm \ref{alg:second_derivative}) and provides a proof sketch of Theorem \ref{thm:3/4}. In Section \ref{sec:impossibility}, we discuss the key ideas behind our impossibility result (Theorem \ref{thm:impossibility}). In particular, we elaborate on the connection between the sparse mixture detection problem and the impossibility of estimating high-degree vertices when $\alpha \in (0, 1/2)$.

 \subsection{Estimating high-degree vertices when $\alpha > 3/4$}
\label{sec:possibility}

The starting point for the design of our estimator is the following simple observation: when a high-degree vertex becomes infected, many of its neighbors will become infected shortly after. In a bit more detail, we could assess whether a vertex $v$ is high-degree by examining the number of infections occurring in a $\delta$-size interval after $v$ is infected, given formally by $I [ T(v), T(v) + \delta ]$. Since we can write 
$I [ T(v) , T(v) + \delta ] = I ( T(v) + \delta) - I (T(v))$,
the quantity $I [ T(v), T(v) + \delta]$ can be thought of as the discrete first derivative of the infection curve $I(t)$. This suggests that high-degree vertices could be estimated by examining the points at which the first derivative of $I(t)$ is large.
Unfortunately, such an approach fails in general. 
Indeed, if we observe many infections in the interval $[T(v), T(v) + \delta]$, then it is challenging to discern whether the infections were mainly caused by a \emph{single}, common infection (namely, $v$), or by many other infected vertices which just happened to be infected around the same time as $v$. 
However, it turns out that if one considers \emph{higher-order} information -- specifically, the \emph{second} derivative of $I(t)$ -- then high-degree vertices can be clearly identified.

\begin{figure}[t]
    \centering
    \begin{subfigure}{0.3 \textwidth}
        \centering
        \includegraphics[width=\textwidth]{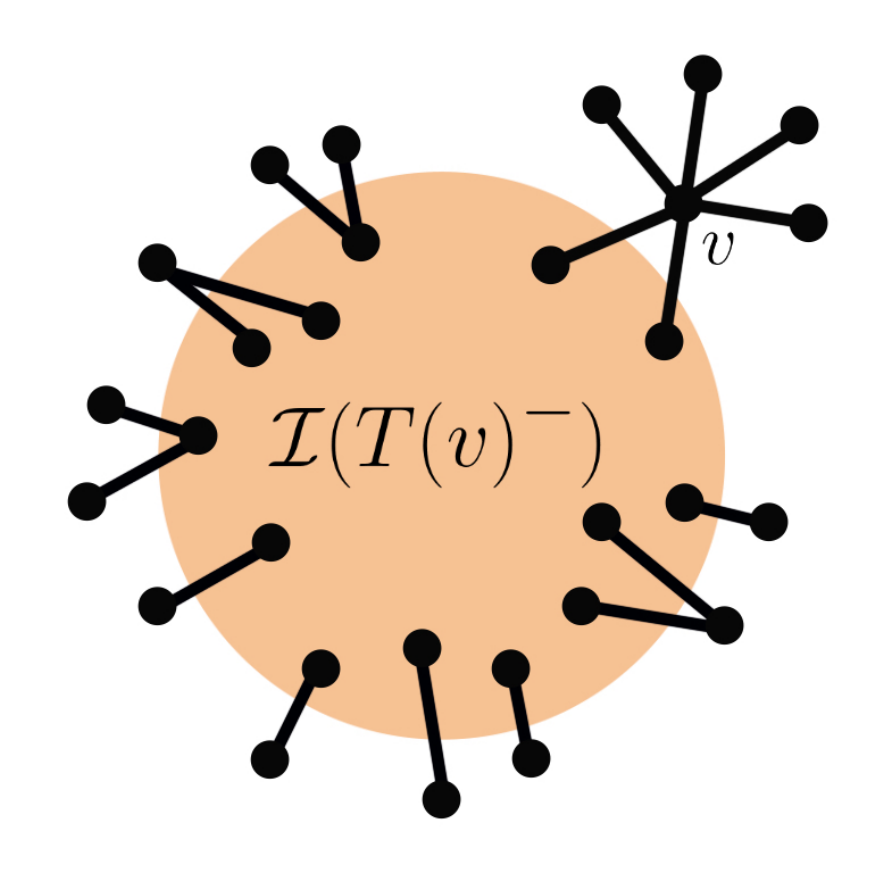}
        \caption{ }
        \label{fig:cut_before}
    \end{subfigure}%
    \begin{subfigure}{0.3 \textwidth}
        \centering
        \includegraphics[width=\textwidth]{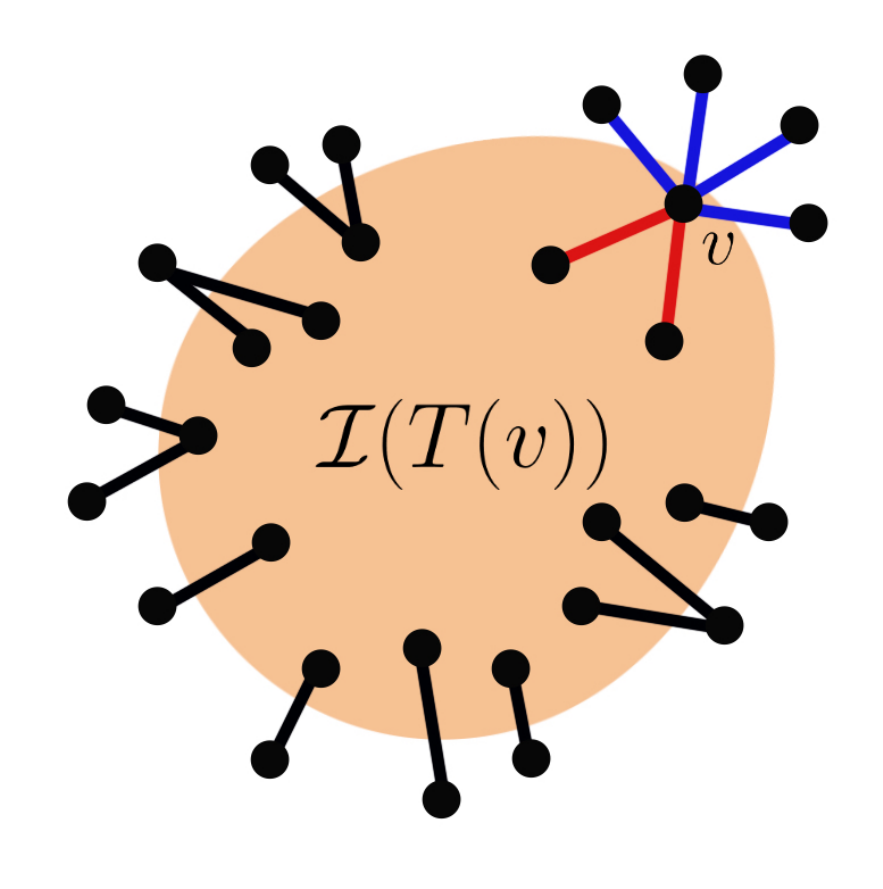}
        \caption{ }
        \label{fig:cut_after}
    \end{subfigure}
    \caption{Visualization of the edges contributing to $\cut( \cascade(t))$ before $(a)$ and after $(b)$ a vertex $v$ becomes infected. In Figure \ref{fig:cut_after}, the number of blue and red edges denote the positive and negative change to $\cut(\cascade(t))$, respectively, upon $v$ being infected.}
    \label{fig:cut}
\end{figure}

 To see this concretely, we begin by examining the first derivative of $I(t)$ in more detail. By \eqref{eq:SI},
\begin{equation}
\label{eq:J_definition}
J(t) : = \E \left[ \left. \frac{d}{dt} I(t) \right \vert \cF_t \right] = \lambda \sum_{v \in V \setminus \cascade(t)} | \cN(v) \cap \cascade(t) | = \lambda \cut ( \cascade(t)).
\end{equation}
Now suppose that a vertex $v$ joins the cascade at time $t$ (that is, $T(v) = t$). Since no other vertices can be infected at the exact same time due to the continuous-time nature of the cascade, we have
\begin{equation}
\label{eq:J}
J(t) - J(t^-)  = \lambda \left( |\cN(v) \setminus \cascade(t) | - | \cN(v) \cap \cascade(t) | \right) = \lambda \deg(v) - 2 \lambda | \cN(v) \cap \cascade(t) |.
\end{equation}
The first equality in \eqref{eq:J} can be explained as follows. When $v$ joins the cascade, all of its susceptible neighbors, i.e., those in $\cN(v) \setminus \cascade(t)$, become exposed. On the other hand, the edges which connect $v$ to $\cascade(t)$ -- namely, edges which connect $v$ to $\cN(v) \cap \cascade(t)$ -- can no longer lead to infection events, hence the contribution for these edges is subtracted. See Figure \ref{fig:cut} for a visualization of this idea.

Crucially, \eqref{eq:J} shows that as long as a vertex is infected before most of its neighborhood (i.e., $|\cN(v) \cap \cascade(t) |$ is much smaller than $\deg(v)$), the second-order changes in the infection curve, captured by $J(t) - J(t^-)$, can reveal the degree of a vertex. However, since $J(t)$ is not a directly observable quantity, we must approximate it from the cascade traces. A natural way to do so is by choosing $\delta > 0$ and using the discrete derivative $I[t, t + \delta ]/\delta$ as a proxy for $J(t)$.
Similarly, we can use $I[t- \delta, t] / \delta$ as a proxy for $J(t^-)$. We can therefore approximate the degree of $v$ using
\begin{equation*}
\widehat{\deg}_\delta (v) : = \frac{ I[ T(v), T(v) + \delta] - I [ T(v) - \delta, T(v) ]}{\delta}.
\end{equation*}
In words, we estimate the degree of $v$ by computing the discrete second derivative of the infection curve when $v$ gets infected. 
As shown through the empirical example in Figure \ref{fig:simulations}, this statistic is quite effective in identifying the infection time of a high-degree vertex from just a single cascade (this is also shown formally in Section \ref{sec:alg_proof}).

\begin{figure}[t]
    \centering
    \begin{subfigure}{0.5 \textwidth}
        \centering
        \includegraphics[width=\textwidth]{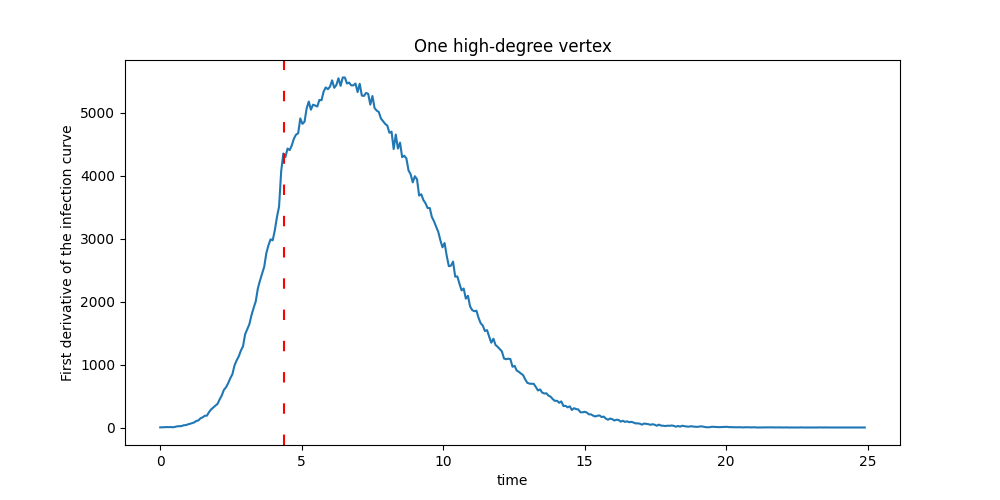}
        \caption{ }
        \label{fig:far1}
    \end{subfigure}%
    \begin{subfigure}{0.5 \textwidth}
        \centering
        \includegraphics[width=\textwidth]{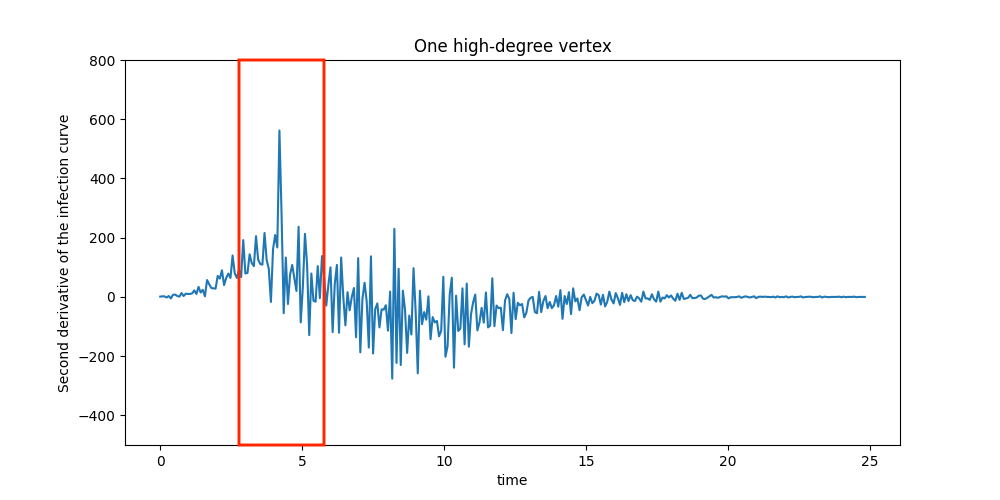}
        \caption{ }
        \label{fig:far2}
    \end{subfigure}
    \caption{Plots of the discrete first $(a)$ and second derivative $(b)$ of the infection curve with $\delta = 0.075$ generated from a graph $G$ with approximately 500,000 vertices and one high-degree vertex. The infection time of the high-degree vertex can be identified by the red dotted line in $(a)$ and the large peak in the second derivative plot $(b)$, highlighted by the red rectangle. $G$ was chosen to be a balanced, 5-regular tree of height 8, where one vertex in the 6th layer of the tree has degree 7500.}
    \label{fig:simulations}
\end{figure}

In light of \eqref{eq:J}, our estimator for $\highdeg(G)$ returns the set of all vertices $v$ for which $\widehat{\deg}_\delta(v)$ is large in all cascade traces; see Algorithm \ref{alg:second_derivative}. As Theorem \ref{thm:alg_3/4} shows, this method succeeds with high probability provided $\alpha > 3/4$.

\begin{breakablealgorithm}
\caption{Finding high-degree vertices via second derivative thresholding}
\label{alg:second_derivative}
\begin{algorithmic}[1]
\Require{Cascade traces $\mathbf{T}_1, \ldots, \mathbf{T}_K$ and parameters $\delta, \tau > 0$}
\Ensure{A set $\HDestimate \subset V$}
\State For each $i \in [K]$ and $v \in V$ compute
\[
\widehat{\deg}_{\delta, i}(v) : = \frac{ I_i [ T_i(v), T_i(v) + \delta ] - I_i [ T_i(v) - \delta, T_i(v) ]}{\delta}.
\]
\State Return $\HDestimate : = \left \{ v \in V : \widehat{\deg}_{\delta, i}(v) \ge \tau, \text{ for all $i \in [K]$} \right \}$.
\end{algorithmic}
\end{breakablealgorithm}

\begin{theorem}
\label{thm:alg_3/4}
Suppose that $\alpha > 3/4$ and that
\[
\delta = \frac{1}{n^{\alpha - 1/4}}, \hspace{1cm} \tau = \frac{n^\alpha}{\log n}, \hspace{1cm} K > \frac{1}{\alpha - 3/4}.
\]
Additionally assume that $d = n^{o(1)}$ and $D = n^\alpha$. Then for any $G \in \cG(n,m,d,D)$, the output of Algorithm \ref{alg:second_derivative} is equal to $\highdeg(G)$, with probability $1 - o(1)$.
\end{theorem}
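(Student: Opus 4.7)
The plan is to analyze each $\widehat{\deg}_{\delta,i}(v)$ as a point estimator for $\lambda \deg(v)$ and to combine a single-cascade concentration bound with a union bound over the $n$ vertices and the $K$ independent cascades. Throughout I use the deterministic bound $\cut(\cascade(t)) \le (nd + mD)/2 = O(n^{1+o(1)})$, which follows from the hypotheses on $\cG(n, m, d, D)$ and bounds the instantaneous infection rate $f(t) := \lambda \cut(\cascade(t))$ uniformly.

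First I establish moment control. Expanding the integrals defining $\widehat{\deg}_{\delta,i}(v)$ and using the jump identity in equation~(3.2) yields a decomposition
\[
\widehat{\deg}_{\delta,i}(v) = \lambda\bigl(\deg(v) - 2|\cN(v) \cap \cascade_i(T_i(v))|\bigr) + \mathrm{Drift}_i(v) + \mathrm{Noise}_i(v),
\]
where $\mathrm{Drift}_i(v)$ collects weighted contributions from jumps of $f$ at infection times other than $T_i(v)$ inside the $2\delta$-window around $T_i(v)$, and $\mathrm{Noise}_i(v)$ is the conditionally mean-zero Poisson fluctuation. Bounding each non-$v$ jump by $\lambda d = n^{o(1)}$ (after a union bound showing that no \emph{other} high-degree vertex is infected in the window, which is a low-probability event since there are only $m=O(1)$ of them) and using that the expected number of infections in a $2\delta$-window is $O(f_{\max} \delta) = O(\lambda n^{5/4 - \alpha + o(1)})$, the drift is $O(n^{5/4 - \alpha + o(1)}) = o(\tau)$ when $\alpha > 3/4$. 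For the variance, the conditional Poisson structure gives $\mathrm{Var}(I_i[a, a+\delta] \mid \cF_a) \le \int_a^{a+\delta} f\,dt = O(\lambda n^{1+o(1)} \delta)$, so after conditioning on $\cF_{T_i(v)-\delta}$ and invoking the strong Markov property, $\mathrm{Var}(\widehat{\deg}_{\delta,i}(v) \mid \cF_{T_i(v) - \delta}) = O(\lambda n^{1+o(1)}/\delta) = O(n^{3/4 + \alpha + o(1)})$. The key observation is that the resulting standard deviation, $O(n^{3/8 + \alpha/2 + o(1)})$, is $o(\tau)$ precisely when $\alpha > 3/4$.

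The analysis then splits by vertex type. For $v^* \in \highdeg(G)$, a supporting lemma shows that $|\cN(v^*) \cap \cascade_i(T_i(v^*))| = o(\deg(v^*))$ with high probability; this yields $\E[\widehat{\deg}_{\delta,i}(v^*) \mid \cF_{T_i(v^*) - \delta}] \ge (1 - o(1)) \lambda n^\alpha$, and a (conditional) Chebyshev bound gives $\p(\widehat{\deg}_{\delta,i}(v^*) < \tau) = O(n^{3/4 - \alpha + o(1)}) = o(1)$, which together with a union bound over the $mK = O(1)$ (high-degree vertex, cascade) pairs handles false negatives. For $v \notin \highdeg(G)$, the bound $\deg(v) \le d = n^{o(1)}$ together with the drift estimate yields $\E[\widehat{\deg}_{\delta,i}(v)] = o(\tau)$, so Chebyshev gives $\p(\widehat{\deg}_{\delta,i}(v) \ge \tau) \le \mathrm{Var}/(\tau - \E)^2 = O(n^{3/4 - \alpha + o(1)})$. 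Using the independence of the $K$ cascades given $G$ and the sources, the probability that a fixed low-degree $v$ is flagged in every cascade is at most $O(n^{K(3/4 - \alpha) + o(1)})$; a union bound over the $\le n$ low-degree vertices yields total false-positive probability $O(n^{1 - K(\alpha - 3/4) + o(1)})$, which is $o(1)$ exactly when $K > 1/(\alpha - 3/4)$.

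The main obstacle is the supporting lemma controlling $|\cN(v^*) \cap \cascade_i(T_i(v^*))|$ for high-degree $v^*$. The intuition is that once any neighbor of $v^*$ is infected, $v^*$ is itself infected via an independent $\mathrm{Exp}(\lambda)$ clock within an $O(1)$-scale gap, during which few other neighbors -- each requiring an independent first-passage path through $G \setminus \{v^*\}$ -- have time to be reached. Making this rigorous seems to require analyzing the order statistics of the first-passage times $\tilde{T}(u)$ to $v^*$'s neighbors in $G \setminus \{v^*\}$ and showing that at most $o(\deg(v^*))$ of them fall below $\tilde{T}(u^*) + E_{u^*, v^*}$, where $u^*$ is the earliest-reached neighbor and $E_{u^*, v^*} \sim \mathrm{Exp}(\lambda)$ is the independent edge clock. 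A secondary technicality is handling the random time $T_i(v)$ in the moment calculations; conditioning on $\cF_{T_i(v) - \delta}$ and using the strong Markov property of the SI process cleanly reduces everything to the conditional Poisson estimates above.
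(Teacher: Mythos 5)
Your proposal gets the exponents and the overall union-bound structure right, but it contains two genuine gaps. \textbf{First, a measurability problem.} Conditioning on $\cF_{T_i(v) - \delta}$ is not well-posed: $T_i(v) - \delta$ is not a stopping time (one cannot verify that time $T_i(v)-\delta$ has been reached without already knowing $T_i(v)$, which is determined by the \emph{future} evolution). Invoking the strong Markov property at this random time is therefore incorrect, and the issue is fatal for the backward increment $I_i[T_i(v)-\delta, T_i(v)]$, which cannot be analyzed by conditioning forward from any stopping time. The paper circumvents this by proving a \emph{uniform-in-$t$} concentration bound: $\hatderivative(t)$ is close to $\derivative(t)$ simultaneously for all $t$ in a fine grid $\mathbb{T}_n$ of size $\sim n^7$ (Lemmas \ref{lemma:second_derivative_approximation} and \ref{lemma:discretization}). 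Crucially, the concentration inequality used there is Freedman's, which gives a tail of the form $\exp(-n^{c})$ for some $c>0$; Chebyshev's polynomial tail would not survive the union bound over the grid, so your plan would need to be promoted to a sub-exponential concentration estimate or replaced by a different argument for simultaneity over the $n$ random times $T_i(v)$.

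\textbf{Second, the supporting lemma's target is too weak.} You aim for $|\cN(v^*) \cap \cascade_i(T_i(v^*))| = o(\deg(v^*))$, which suffices for the lower bound on $\E[\widehat{\deg}_{\delta,i}(v^*)]$ for $v^* \in \highdeg(G)$. But the same quantity also controls the rate at which $v^*$ becomes infected conditional on $\cF_t$ (essentially $\lambda |\cN(v^*) \cap \cascade(t)|$, plus two-step-path corrections), and this rate governs the probability that $v^*$ lands within $\delta$ of a low-degree $u$ in a given cascade. That probability must be shown to be $O(n^{3/4-\alpha+o(1)})$ for your $K$-fold product and $n$-vertex union bound to close. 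To get this, you need $|\cN(v^*) \cap \cascade(t)| = O(n^{1/2+o(1)})$, so that the nearby-infection probability is $O(\delta \cdot n^{1/2+o(1)}) = O(n^{3/4-\alpha+o(1)})$. With only the target $o(\deg(v^*)) = o(n^\alpha)$, the bound would be $O(\delta \cdot o(n^\alpha)) = o(n^{1/4})$, which does not go to zero. The $n^{1/2+o(1)}$ scale is exactly the content of the paper's Lemma \ref{lemma:A} (the event $\cA$), and its proof is a nontrivial stopping-time argument; the order-statistics sketch you give targets the wrong scale and would need to be substantially reworked. Relatedly, your claim that the ``no other high-degree vertex in the window'' event is low-probability ``since there are only $m=O(1)$ of them'' is not a valid justification: $m$ being constant does not bound the conditional rate at which a single high-degree $v^*$ becomes infected, which is what actually drives this probability.
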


\begin{remark}
A useful feature of Algorithm \ref{alg:second_derivative} is that it is adaptive to the unknown parameters $m$ (the number of high-degree vertices) and $\lambda$ (the spreading rate of the cascade). In contrast, the prior work on structure learning leverages likelihood-based methods which require precise knowledge of the cascade model \cite{NS12_cascades, ACFKP13_trace_complexity}.
\end{remark}

\begin{remark}
Though our main result concerns the number of cascades $K$ needed to learn high degree vertices, our techniques show that the infection times of high-degree vertices can be estimated from a single cascade; see Lemma \ref{lemma:single_cascade} in Section \ref{sec:alg_proof}.
\end{remark}

We note that Theorem \ref{thm:3/4} follows immediately from Theorem \ref{thm:alg_3/4} as a corollary. We provide a sketch of the proof of Theorem \ref{thm:alg_3/4} below, and defer the full details to Sections \ref{sec:alg_proof} and \ref{sec:second_derivative_proof}.

\begin{proof}[Proof sketch of Theorem \ref{thm:alg_3/4}]
Let $\gamma < \alpha$ be sufficiently close to $\alpha$. The strategy of our proof is to show, for all $v \in V$, that
\begin{equation}
\label{eq:degree_approximation}
\left| \widehat{\deg}_{\delta, i}(v) - \lambda \deg(v) \right| \le n^{\gamma }, 
\end{equation}
in all cascades. If \eqref{eq:degree_approximation} is true for a particular $i \in [K]$ and $v$ is a low-degree vertex, then $\widehat{\deg}_{\delta, i} (v) \le \lambda \deg(v) + n^{\gamma} \le 2n^\gamma$. Else if $v$ is high-degree, then $\widehat{\deg}_{\delta, i}(v) \ge \lambda \deg(v) - n^\gamma \ge \frac{\lambda}{2} n^\alpha$. If $\lambda$ is not known a priori, any choice of threshold satisfying $\tau = n^{\alpha - o(1)}$ works in distinguishing the two cases. In particular, $\tau = n^\alpha / \log(n)$ suffices.

Due to the way $\widehat{\deg}_{\delta, i}(v)$ is computed, establishing \eqref{eq:degree_approximation} reduces to showing that $I[t, t + \delta] / \delta$ concentrates around $J(t)$ and $I[t, t - \delta] / \delta$ concentrates around $J(t^-)$ for the values of $t$ corresponding to vertices' infection times.
For this task, choosing the right value of $\delta$ is crucial. If $\delta$ is too small, then $I [ t, t + \delta]$ may also be quite small, making meaningful concentration around its mean impossible. On the other hand, if $\delta$ is too large, the mean of $I[t, t + \delta ] / \delta$ may deviate significantly from $J(t)$ due to the potentially large changes to the set of infections in the interval $(t, t + \delta]$.
An additional complication arising if $\delta$ is too large is that
there may be a large number of low-degree vertices consistently infected around the same time as a high-degree vertex. If $u$ is one such low-degree vertex, then $\widehat{\deg}_\delta (u)$ can be quite large across multiple traces, leading to false positives. 

Examining these conditions in more detail exposes why we must assume $\alpha > 3/4$ for Algorithm \ref{alg:second_derivative} to succeed. Let us focus on the behavior of $I[t, t + \delta ] / \delta$ (the analysis of $I[t- \delta, t] / \delta$ is similar). We show that
\begin{equation}
\label{eq:first_derivative_concentration}
\frac{I[t, t + \delta]}{\delta} =  J(t) \pm O \left( n \delta  \right) \pm O \left( \sqrt{\frac{n}{\delta}} \right).
\end{equation}
Above, the $O(n \delta)$ fluctuation represents the deviation between $J(t)$ and the (conditional) mean of $I[t, t + \delta]/\delta$. This is essentially caused by the additional $O(n \delta)$ additional infections which occur in the interval $[t, t + \delta]$.
The $O( \sqrt{ n/\delta} )$ fluctuation comes from the deviation of $I[t, t + \delta]$ from its mean, which can be derived through subgaussian-type concentration inequalities.
To correctly estimate high-degree vertices through \eqref{eq:J}, both fluctuations need to be smaller than $D = n^\alpha$. We therefore require that $\max \{ n \delta, \sqrt{n / \delta} \} = o(n^\alpha)$, which is equivalent to
\begin{equation}
\label{eq:delta_lower_bound}
\delta = o \left( n^{- (1 - \alpha)} \right) \text{ and } \delta = \omega \left( n^{ - (2 \alpha - 1)} \right).
\end{equation}

\noindent We remark that \eqref{eq:delta_lower_bound} already requires $\alpha > 2/3$. Next, we need to ensure that there is no low-degree vertex $u$ that is infected around the same time as a high-degree vertex in all cascade traces. Otherwise, as we mentioned earlier, $\widehat{\deg}_{\delta,i}(u)$ could be quite large, which would imply that the output of Algorithm \ref{alg:second_derivative} contains false positives. With this scenario in mind, suppose that a low-degree $u$ becomes infected first at time $t$, and at that point in time, a high-degree $v$ has $M$ infected neighbors. By \eqref{eq:SI}, it holds for small $\delta$ that
\[
\p ( |T(u) - T(v) | \le \delta \vert \cF_t ) = \p ( v \in \cascade(t + \delta) \vert \cF_t ) = \delta M + o ( \delta).
\]
The probability that $u$ and $v$ are close in all cascade traces is therefore at most $O ( (\delta M)^{K})$. A union bound over pairs of low-degree $u$ and high-degree $v$ shows that
\begin{equation}
\label{eq:Tu_Tv_union_bound}
\p ( \exists u \in V, v \in \highdeg(G) : |T_i(u) - T_i(v) | \le \delta \text{ in all cascades} ) \le O \left( nm (\delta M)^{K} \right).
\end{equation}
To show that the right hand side of \eqref{eq:Tu_Tv_union_bound} is $o(1)$, we first need to choose $\delta$ such that $\delta M = o(1)$. 
Although $M$ can in principle be as large as $\deg(v)$ (i.e., if all of $v$'s neighbors are infected before $v$), we show that with high probability $M \le \sqrt{n}$ (see Lemma \ref{lemma:A} in Section \ref{sec:preliminaries}). Hence we may choose $\delta$ satisfying
\begin{equation}
\label{eq:delta_upper_bound}
\delta = o \left( n^{-1/2} \right).
\end{equation}
Writing $\delta = n^{- \beta}$ for $\beta > 0$, \eqref{eq:delta_lower_bound} and \eqref{eq:delta_upper_bound} imply that $\beta \in (1/2, 2 \alpha - 1)$. This interval is nonempty when $\alpha > 3/4$, and we may choose $\beta : = \alpha - 1/4$ to satisfy \eqref{eq:delta_lower_bound} and \eqref{eq:delta_upper_bound}. Finally, since $\delta = n^{- (\alpha - 1/4)}$ and $M \le \sqrt{n}$, $K > (\alpha - 3/4)^{-1}$ is needed for the right hand side of \eqref{eq:Tu_Tv_union_bound} to be $o(1)$.
\end{proof}

\subsection{Impossibility of estimation when $\alpha \in (0,1/2)$}
\label{sec:impossibility}

\paragraph{Connections to sparse mixture detection.} 
The cornerstone of our proof is a correspondence between high-degree estimation and the problem of sparse mixture detection. In a bit more detail, let $G'$ be a graph on $n / 2$ vertices of maximum degree $n^{o(1)}$ (i.e., all its vertices are low-degree). The graph $G$ will be formed by adding an additional $n/2$ vertices labelled $1, \ldots, n/2$ to $G'$, along with a single edge per added vertex which connects to a vertex in $G'$. A natural way of generating a graph $G$ \emph{without} any high-degree vertices is to let each of the new $n/2$ vertices connect to a uniform random parent in $G'$. Through standard probabilistic arguments, it can be seen that no more than $O ( \log n)$ neighbors are added to any particular vertex in $G'$, so $G$ contains no high-degree vertices. 

A simple modification of this procedure can also generate graphs with a high-degree vertex. Suppose we fix a vertex $v$ in $G'$. For each of the additional $n / 2$ vertices, with probability $2D / n$, it connects to $v$. Otherwise, with probability $1 - 2D / n$, it connects to a uniform random vertex in $G'$.
In expectation, $v$ receives $D$ additional connections (so, in particular, $\mathrm{deg}(v) \ge D$ in expectation). As before, all of the other vertices receive at most $O ( \log n)$ additional neighbors, with high probability. Hence we obtain a graph $G$ in which $v$ is the only high-degree vertex. 

Observe that in the second procedure, the distribution of the infection times of each of the added $n/2$ vertices can be viewed as a mixture of two distributions. To be precise, let $P$ be the distribution of a vertex's infection times in the observed cascade traces if they connect to a uniform random neighbor in $G'$. Similarly, let $Q$ be the distribution of a vertex's infection times if they connect to $v$. Then the task of differentiating between the two generative models for $G$ -- or equivalently, of testing whether $v$ is high-degree -- can be phrased as the following hypothesis testing problem:
\begin{align}
\label{eq:sparse_mixture_detection}
H_0 & : \mathbf{T}(1), \ldots \mathbf{T}(n/2) \sim P 
\hspace{1cm} H_1  : \mathbf{T}(1), \ldots, \mathbf{T}(n/2) \sim \left( \left( 1 - \epsilon_n \right) P + \epsilon_n Q  \right),
\end{align}
where we set $\epsilon_n : = 2D/n$ above, and we denote $\mathbf{T}(j) := (T_1(j), \ldots, T_K(j))$ where $T_i(j)$ is the infection time of vertex $j$ in the $i$th cascade.

It turns out that \eqref{eq:sparse_mixture_detection} is a special case of a well-studied problem known as \emph{sparse mixture detection}, which dates back to the work of Dobrusin \cite{dobrushin2958statistical}.
Burnashev \cite{burnashev1991problem} studied a variant related to Gaussian processes, and Ingster \cite{ingster1996some} identified a subtle phase transition between the possibility and impossibility of detection for mixtures of Gaussian distributions. The problem was popularized by the work of Donoho and Jin \cite{donoho2004higher}, who formulated an adaptive method in place of the likelihood ratio called \emph{higher criticism}, again for mixtures of Gaussians. Later, Cai and Wu \cite{cai2014optimal} developed an analysis of the sparse mixture problem for {generic} distributions. 
We remark that Mossel and Roch \cite{mossel2017distance} used sparse mixture detection in the inference of combinatorial structure, though in a very different context.

\paragraph{Impossibility of detecting high-degree vertices.} We solve our instance of the sparse mixture detection problem \eqref{eq:sparse_mixture_detection} using the techniques developed by Cai and Wu \cite{cai2014optimal}. That is, we bound the total variation distance between the two distributions in \eqref{eq:sparse_mixture_detection} by the $\chi^2$ divergence since it tensorizes and because it is easier to study for problems involving mixtures. Our analysis shows that if $K$ satisfies \eqref{eq:K_impossible}, then the $\chi^2$ divergence tends to zero. As a result, it is impossible to distinguish between scenarios where there are no high-degree vertices ($H_0$) and where $v$ is a high-degree vertex ($H_1$).

\paragraph{Putting everything together.} Finally, we show that the impossibility of detection implies that no estimator can successfully output the correct high-degree vertex with probability greater than $o(1)$. We remark that the ensemble $\cH$ and distribution $\mu$ stated in the theorem essentially corresponds to the random constructions for $G$ described in this subsection. For more details, see Section \ref{sec:impossibility_proof}.

\clearpage

\section{Preliminaries: Cascade dynamics and infection times}
\label{sec:preliminaries}

In this section we derive several useful results on the behavior of the SI process \eqref{eq:SI} and the distribution of infection times. Throughout, we will assume that $\cascade(t)$ is a realization of $\SI(G, v_0)$ for a particular graph $G \in \cG(n,m,d,D)$ and vertex $v_0 \in V$.

We begin by discussing an important equivalence between \eqref{eq:SI} and First Passage Percolation on $G$ with $\mathrm{Exp}(\lambda)$ edge weights. 
In a bit more detail, let $\{ \cF_t \}_{t \ge 0}$ denote the natural filtration corresponding to the SI process. Fix $t \ge 0$, and let $\{ F(e) \}_{e \in E}$ be a collection of i.i.d. $\mathrm{Exp}(\lambda)$ random variables that are independent of $\cF_t$. For a path\footnote{Recall that a path is a finite sequence of edges $e_1, \ldots, e_m$ such that for each $i \ge 1$, $e_i$ and $e_{i + 1}$ share exactly one endpoint.} $P$ in the graph, we define the \emph{weight} of the path to be $\weight(P) : = \sum_{e \in P} F(e)$. Then we have the following result.

\begin{proposition}
\label{prop:fpp}
Let $t \ge 0$, and condition on $\cF_t$. Suppose that $v \in V \setminus \cascade(t)$. Let $\cP(v,t)$ be the collection of paths from $\cascade(t)$ to $v$ where only the first edge in the path has an endpoint in $\cascade (t)$. Then the conditional distribution of $T(v)$ with respect to $\cF_t$ is given by 
\[
T(v) = t + \min_{P \in \cP(v,t)} \weight(P).
\]
\end{proposition}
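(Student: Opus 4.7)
The plan is to establish the equivalence between the SI process and First Passage Percolation through a coupling argument that exploits the memoryless property of exponentials. I would start by constructing an auxiliary model: associate to each edge $e\in E$ an independent clock $F_0(e)\sim \mathrm{Exp}(\lambda)$, and declare that $e$ \emph{fires} at time $s+F_0(e)$, where $s$ is the first time either endpoint of $e$ is infected (with $s=0$ for edges incident to $v_0$). The infection time of $v$ in this construction is then, by definition, the shortest path weight from $v_0$ to $v$ where weights are $F_0(e)$. A short calculation using the definition shows that this clock-based process satisfies the infinitesimal description \eqref{eq:SI}, so by uniqueness in law it is a realization of $\SI(G,v_0)$.

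Next I would transfer this representation to the conditional setting. Fix $t\ge 0$ and condition on $\cF_t$. Classify the edges of $G$ into three types: (i) both endpoints in $\cascade(t)$, which no longer matter; (ii) exactly one endpoint in $\cascade(t)$, whose clock has been running since that endpoint's infection time but has not yet fired by time $t$; and (iii) both endpoints in $V\setminus \cascade(t)$, whose clock has not yet started. For type (ii) edges, the strong Markov property and memorylessness of $\mathrm{Exp}(\lambda)$ imply that the residual firing time after $t$ is again $\mathrm{Exp}(\lambda)$, independent of $\cF_t$ and of all other residuals. For type (iii) edges, the clock will simply be a fresh $\mathrm{Exp}(\lambda)$ once one of its endpoints is infected. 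Setting $F(e)$ to be this post-$t$ firing duration for edges of type (ii) and (iii), the family $\{F(e)\}$ is i.i.d.\ $\mathrm{Exp}(\lambda)$ and independent of $\cF_t$, exactly as in the statement.

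Finally, for $v\in V\setminus \cascade(t)$, I would identify $T(v)-t$ with $\min_{P\in\cP(v,t)}\weight(P)$ by tracing the route taken by the infection. Any path along which infection propagates from $\cascade(t)$ to $v$ begins with an edge crossing the boundary of $\cascade(t)$ and then stays inside $V\setminus \cascade(t)$ until reaching $v$ (once a vertex in $\cascade(t)$ has been entered, we can back up to its boundary crossing without increasing weight, since clocks internal to $\cascade(t)$ do not contribute to $T(v)-t$). Hence it suffices to consider paths in $\cP(v,t)$. A standard induction on the vertices of $V\setminus \cascade(t)$ in the order they become infected, carried out with the clock semantics, shows that each such vertex is infected at time $t$ plus the minimum $\weight(P)$ among paths in $\cP(v,t)$.

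The main obstacle is the bookkeeping around conditioning on $\cF_t$: one must verify that the residual clocks in categories (ii) and (iii) really are jointly i.i.d.\ $\mathrm{Exp}(\lambda)$ given $\cF_t$, rather than carrying some subtle dependence through the history. This follows from the strong Markov property of the SI process together with the standard fact that $\mathrm{Exp}(\lambda)$ clocks conditioned on not having rung by time $t$ restart afresh, but it is the step that deserves explicit justification; once it is in place, the shortest-path identification is essentially tautological.
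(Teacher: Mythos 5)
Your proof is correct but takes a genuinely different route from the paper's. Both arguments ultimately invoke the uniqueness of the Markov process defined by \eqref{eq:SI}, but they deploy it at different places. The paper treats the FPP formula with \emph{fresh} clocks $\{F(e)\}_{e\in E}$ independent of $\cF_t$ as the \emph{definition} of a process on $[t,\infty)$ and directly verifies that this process has the infinitesimal transition rate prescribed by \eqref{eq:SI}, aided by a short auxiliary lemma ruling out two infections in a window of length $\epsilon$. This entirely sidesteps residual-clock reasoning, since no claim is ever made about clocks that have been running since before time $t$. You instead build one global clock model from time $0$, establish its equivalence with the SI process once at the outset, and then transfer the representation to the conditional setting via memorylessness. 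That transfer step --- showing that the residuals of the boundary-crossing clocks are \emph{jointly} i.i.d.\ $\mathrm{Exp}(\lambda)$ given $\cF_t$ --- is exactly what you flag as the main obstacle, and you are right to flag it: conditioning on $\cF_t$ determines which vertices are infected and when, hence which clocks have not fired and how long each has been running, and the joint freshness claim does not reduce to single-edge memorylessness without a careful argument (essentially the one the paper avoids altogether). Your route, while heavier in this respect, has the advantage of providing a single pathwise coupling of the whole trajectory, which the paper's conditional re-seeding of clocks at each $t$ does not. One caveat: your remark that the final shortest-path identification is ``essentially tautological'' is a bit optimistic --- deriving $T(v) = t + \min_{P\in\cP(v,t)}\weight(P)$ from the firing semantics, including the reduction to paths in $\cP(v,t)$ by backing up to the first boundary crossing, is a genuine (if standard) induction --- but your sketch of those steps is sound.
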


This result is standard, and essentially follows from the memoryless property of exponential distributions. We defer the reader to \cite[Chapter 6]{fpp}, or to Section \ref{sec:fpp} for full details of the proof. 

Using the representation in Proposition \ref{prop:fpp}, we can derive a number of useful results on the infection times of vertices. We start by establishing probabilistic upper and lower bounds on conditional infection times of vertices.

\begin{lemma}
\label{lemma:infection_time_upper_bound}
Let $t \ge 0$, and condition on $\cF_t$. Suppose that $v \in V \setminus \cascade(t)$. Then $T(v) - t$ is stochastically dominated by a $\mathrm{Exp} ( \lambda | \cN(v) \cap \cascade(t) | )$ random variable.
\end{lemma}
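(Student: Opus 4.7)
The plan is to read the bound directly off the First Passage Percolation representation provided by Proposition~\ref{prop:fpp}. Condition on $\cF_t$ and write $\cN_t := \cN(v) \cap \cascade(t)$; by Proposition~\ref{prop:fpp}, $T(v) - t = \min_{P \in \cP(v,t)} \weight(P)$, where the edge weights $\{F(e)\}_{e \in E}$ are i.i.d. $\mathrm{Exp}(\lambda)$ and independent of $\cF_t$.

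First, I would isolate the sub-collection $\cP^{(1)}(v,t) \subseteq \cP(v,t)$ consisting of the length-one paths: for each $u \in \cN_t$, the single edge $(u,v)$ is a valid element of $\cP(v,t)$, since its only endpoint in $\cascade(t)$ is $u$ and that endpoint lies on the first (and only) edge of the path. Because the minimum of $\weight(\cdot)$ over a subset of paths is at least the minimum over the full set, this yields the almost-sure inequality
\[
T(v) - t \;\le\; \min_{u \in \cN_t} F((u,v)).
\]

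Next, I would invoke the standard fact that the minimum of $k$ i.i.d. $\mathrm{Exp}(\lambda)$ random variables is $\mathrm{Exp}(k \lambda)$. The edges $\{(u,v) : u \in \cN_t\}$ are distinct (since $G$ is simple), and their weights are independent of $\cF_t$, so conditional on $\cF_t$ the right-hand side above is distributed as $\mathrm{Exp}(\lambda |\cN_t|)$. Together with the pointwise bound this exhibits an explicit coupling under which $T(v) - t$ is dominated by an $\mathrm{Exp}(\lambda |\cN(v) \cap \cascade(t)|)$ random variable, which is the definition of stochastic domination.

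I do not anticipate a genuine obstacle: the only points requiring a sentence of care are (i) verifying that each single-edge path genuinely belongs to $\cP(v,t)$ as defined (so that the restriction to $\cP^{(1)}(v,t)$ is legitimate), and (ii) emphasising that the argument produces an almost-sure coupling, not merely equality in distribution, so that stochastic domination (rather than a weaker tail comparison) is what comes out of the proof.
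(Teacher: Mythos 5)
Your proposal matches the paper's proof exactly: both restrict the minimum in Proposition~\ref{prop:fpp} to length-one paths, obtaining $T(v)-t \le \min_{u \in \cN(v)\cap\cascade(t)} F(u,v)$, and then use the standard fact that a minimum of independent $\mathrm{Exp}(\lambda)$ variables is $\mathrm{Exp}(\lambda k)$. The two points of care you flag (single-edge paths belonging to $\cP(v,t)$, and the argument giving a genuine almost-sure coupling) are correct and, if anything, slightly more explicit than the paper.
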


\begin{proof}
Conditioned on $\cF_t$, Proposition \ref{prop:fpp} implies that
\begin{align*}
T(v) - t & = \min_{P  \in \cP(v,t) } \weight(P)  \le \min_{P \in \cP(v,t) : |P| = 1} \weight(P)  = \min_{u \in \cN(v) \cap \cascade(t) } F(u,v).
\end{align*}
Due to properties of exponential random variables, we have that 
\[
\min_{u \in \cN(v) \cap \cascade(t)} F(u,v) \sim \mathrm{Exp} \left( \lambda | \cN(v) \cap \cascade(t) | \right),
\]
which proves the claimed result.
\end{proof}

\begin{lemma}
\label{lemma:infection_time_lower_bound}
Let $t \ge 0$, condition on $\cF_t$, and suppose that $v \in V \setminus \cascade (t)$. Then $T(v) - t$ stochastically dominates a $\mathrm{Exp} ( \lambda | \cN(v)| )$ random variable.
\end{lemma}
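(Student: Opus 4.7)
The plan is to mimic the proof of Lemma \ref{lemma:infection_time_upper_bound} but bound the minimum path weight from below rather than above, using the first passage percolation representation from Proposition \ref{prop:fpp}.

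First I would condition on $\cF_t$ and write $T(v) - t = \min_{P \in \cP(v,t)} \weight(P)$, using the fresh i.i.d.\ collection $\{F(e)\}_{e \in E}$ of $\mathrm{Exp}(\lambda)$ edge weights. The key observation is that every path $P \in \cP(v,t)$ terminates at $v$, so its last edge $e_{\mathrm{last}}(P)$ is incident to $v$, i.e., of the form $(u,v)$ for some $u \in \cN(v)$. Since the edge weights are nonnegative,
\[
\weight(P) \;=\; \sum_{e \in P} F(e) \;\ge\; F\bigl(e_{\mathrm{last}}(P)\bigr).
\]
Taking minima over $P \in \cP(v,t)$, and noting that $\{e_{\mathrm{last}}(P) : P \in \cP(v,t)\} \subseteq \{(u,v) : u \in \cN(v)\}$, I get
\[
T(v) - t \;=\; \min_{P \in \cP(v,t)} \weight(P) \;\ge\; \min_{u \in \cN(v)} F(u,v)
\]
almost surely, conditional on $\cF_t$.

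Next I would invoke the standard fact that the minimum of $|\cN(v)|$ independent $\mathrm{Exp}(\lambda)$ random variables is distributed as $\mathrm{Exp}(\lambda |\cN(v)|)$. Since the $F(u,v)$ are independent of $\cF_t$ by the setup of Proposition \ref{prop:fpp}, the conditional distribution of $\min_{u \in \cN(v)} F(u,v)$ given $\cF_t$ is exactly $\mathrm{Exp}(\lambda |\cN(v)|)$. Because an almost sure inequality implies stochastic domination, $T(v) - t$ stochastically dominates $\mathrm{Exp}(\lambda |\cN(v)|)$, as claimed.

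I do not expect any real obstacle here: the argument is essentially a one-line consequence of Proposition \ref{prop:fpp} combined with the trivial bound $\weight(P) \ge F(e_{\mathrm{last}}(P))$. The only mild point to be careful about is confirming that enlarging the index set from terminal edges of paths in $\cP(v,t)$ to all edges incident to $v$ only decreases the minimum, which preserves the direction of the inequality needed for stochastic domination.
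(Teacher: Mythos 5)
Your proposal is correct and matches the paper's proof essentially verbatim: both invoke Proposition~\ref{prop:fpp}, lower-bound $\weight(P)$ by the weight of the edge of $P$ incident to $v$, take a minimum over $u \in \cN(v)$, and conclude via the distribution of a minimum of i.i.d.\ exponentials. The only difference is that you spell out the path-to-last-edge step in slightly more detail.
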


\begin{proof}
Conditioned on $\cF_t$, we have that
\[
T(v) - t = \min_{P \in \cP(v,t)} \weight(P) \ge \min_{u \in \cN(v)} F(u,v),
\]
where the inequality follows since any path ending in $v$ must include one of the edges incident to $v$. The claim follows since $\min_{u \in \cN(v) } F(u,v) \sim \mathrm{Exp} ( \lambda | \cN(v) | )$.
\end{proof}

We next define a useful event concerning the number of infected neighbors of high-degree vertices. We remark that the control of this quantity is especially important for us, as it is equal to the bias of our estimate of vertex degrees (see \eqref{eq:J}).

\begin{definition}
\label{def:A}
The event $\cA$ holds if and only if 
\[
| \cN(v) \cap \cascade (T(v)) | \le 2d \sqrt{n} \log^2 n,
\]
for all $v \in V$.
\end{definition}

\begin{remark}
As we assume throughout this paper that $d = n^{o(1)}$, we will often use that $| \cN(v) \cap \cascade (T(v)) | \le n^{1/2 + o(1)}$ when using the event $\cA$.
\end{remark}

\begin{remark}
Assuming that $G \in \cG(n,m,d,D)$, the event $\cA$ is only relevant for high-degree vertices, since 
\[
| \cN(v) \cap \cascade ( T(v)) | \le \deg(v) = n^{o(1)} \le 2d \sqrt{n} \log^2 n,
\]
otherwise.
\end{remark}

\begin{lemma}
\label{lemma:A}
Suppose that $m = o (\log^2 n)$. Then $\p ( \cA) = 1 - o(1)$. 
\end{lemma}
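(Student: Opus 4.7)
The plan is to reduce the statement to the high-degree vertices, then handle each one via an exponential race argument and a union bound. I would first observe that for any $v$ with $\deg(v) \le 2d\sqrt{n}\log^2 n$, the bound defining $\cA$ is automatic since $|\cN(v) \cap \cascade(T(v))| \le \deg(v)$. Setting $K_0 := d\sqrt{n}\log^2 n$, it then suffices to bound $\p(|\cN(v) \cap \cascade(T(v))| > 2K_0)$ for each high-degree $v$ with $\deg(v) > 2K_0$ and union bound over the at most $m = o(\log^2 n)$ such vertices.

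Fix such a $v$, and let $\tau$ denote the first time $v$ has $K_0$ infected neighbors, with $\tau := \infty$ if this never occurs. On $\{\tau > T(v)\}$ the conclusion is immediate, so I focus on $\{\tau \le T(v)\}$ and aim to show that with high probability at most $K_0$ additional neighbors of $v$ are infected in $[\tau, T(v))$. For this I would invoke an exponential-race argument. By \eqref{eq:SI}, at any $t \in [\tau, T(v))$ the instantaneous rate at which $v$ is infected is $\lambda |\cN(v) \cap \cascade(t)| \ge \lambda K_0$, while the total rate at which some neighbor of $v$ is infected next is at most $\lambda \sum_{u \in \cN(v)} \deg(u)$. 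Since at most $m-1$ of $v$'s neighbors are themselves high-degree while the rest have degree at most $d$, this sum is bounded by $(m-1)n + d|\cN(v)| \le (m+d)n$. The memoryless property of the exponentials then gives that, conditional on the next jump of the SI process involving $\{v\} \cup \cN(v)$, the probability it is $v$ is uniformly at least $p := K_0 / (K_0 + (m+d)n)$. Hence the count of neighbor-of-$v$ infections in $[\tau, T(v))$ is stochastically dominated by $\mathrm{Geom}(p) - 1$.

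Combining the above, the probability that more than $K_0$ extra neighbors are infected is at most $(1 - p)^{K_0} \le e^{-K_0 p}$. Plugging in $K_0 = d\sqrt{n}\log^2 n$ and using $d = n^{o(1)}$ together with $m = o(\log^2 n)$, a short calculation yields $K_0 p \ge K_0^2 / (2(m+d)n) = \Omega(\log^2 n)$, so the probability is at most $e^{-\Omega(\log^2 n)}$. A union bound over the at most $m$ high-degree vertices then gives $\p(\cA^c) \le m \cdot e^{-\Omega(\log^2 n)} = o(1)$, as desired.

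The hardest part will be rigorously justifying the stochastic domination by a geometric random variable in the race argument: between successive $\{v\} \cup \cN(v)$-jumps, other vertices may be infected and shift the rates at which $v$'s neighbors get infected. The key observation that makes this work is that $|\cN(v) \cap \cascade(t)|$ is nondecreasing in $t$ on $[\tau, T(v))$ and the upper bound $\sum_{u \in \cN(v)} \deg(u)$ is time-independent, so the lower bound $p$ on ``$v$ wins the race'' holds uniformly in the history. This should allow one to couple the sequence of $\{v\} \cup \cN(v)$-jumps with an i.i.d.\ $\mathrm{Bernoulli}(p)$ sequence via the strong Markov property of the SI process.
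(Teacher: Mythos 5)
Your proposal is correct, and it takes a genuinely different route from the paper's proof. The paper stops at the smaller threshold of $\sqrt{n}\log n$ infected neighbors (a stopping time $T$), then uses Lemma~\ref{lemma:infection_time_upper_bound} to argue that once $v$ has that many infected neighbors, $T(v) - T$ is dominated by an $\mathrm{Exp}(\lambda\sqrt{n}\log n)$ variable so that $T(v) \le T + 1/(\lambda\sqrt{n})$ with probability $1-1/n$, and finally bounds $\E[|\cN(v)\cap\cascade[T,T']|\mid\cF_T]$ by $2d\sqrt{n}$ and applies Markov's inequality. This first-moment approach yields a bound of order $1/\log^2 n$ on each individual bad event $\cE_v$, which suffices to union bound over the at most $m = o(\log^2 n)$ high-degree vertices. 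You instead stop at the larger threshold $K_0 = d\sqrt{n}\log^2 n$ and run an exponential race: once $v$ has $K_0$ infected neighbors, its instantaneous infection rate $\ge \lambda K_0$ is maintained (monotonicity of $|\cN(v)\cap\cascade(t)|$) while the aggregate rate of new infections among $\cN(v)$ stays below the time-independent ceiling $\lambda(m+d)n$, so each successive infection within $\{v\}\cup\cN(v)$ is $v$ with conditional probability at least $p = K_0/(K_0+(m+d)n)$. Coupling with i.i.d.\ $\mathrm{Bernoulli}(p)$ variables via the strong Markov property gives the geometric domination, and the tail $e^{-K_0 p} = e^{-\Omega(\log^2 n)}$ is far sharper than what Markov provides. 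Your identification of the key monotonicity and time-independence facts is exactly what makes the coupling rigorous; the approach would in fact accommodate a considerably larger $m$ than the lemma hypothesizes. In short: same reduction to high-degree vertices and same union bound, but a different (and quantitatively stronger) per-vertex estimate, obtained by a second-moment-free race argument rather than a first-moment computation over a short time window.
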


\begin{proof}
If $\deg(v) \le 2d \sqrt{n} \log^2 n$, then 
\[
| \cN(v) \cap \cascade (T(v)) | \le | \cN(v) | \le 2d \sqrt{n} \log^2 n,
\]
which satisfies the condition in $\cA$ corresponding to $v$. It therefore suffices to consider the case $\deg(v) \ge 2d \sqrt{n} \log^2 n$. Define the event 
\[
\cE_v : = \left \{ | \cN(v) \cap \cascade (T(v)) | \ge 2d \sqrt{n} \log^2 n \right \},
\]
and let $T$ be the stopping time at which $\sqrt{n} \log n$ neighbors of $v$ are first infected. 

Let $t \ge 0$, and condition on $\cF_t$. Let us also assume that $T = t$ (an event that is $\cF_t$-measurable), so that $| \cN(v) \cap \cascade (t) | = \sqrt{n} \log n$. If $T(v) \le t$, then we have that $\p ( \cE_v \vert \cF_t ) = 0$. We therefore proceed by considering the case where $T(v) > t$. Notice that if $T = t$,
\[
| \cN(v) \cap \cascade(T(v)) | = | \cN(v) \cap \cascade(t) | + | \cN(v) \cap \cascade[t, T(v)] | = \sqrt{n} \log n + | \cN(v) \cap \cascade[t, T(v)] |.
\]
As a result, we have that
\begin{equation}
\label{eq:Ev_bound_1}
\p ( \cE_v \vert \cF_t )\mathbf{1}(T = t) \le \p ( | \cN(v) \cap \cascade[t, T(v)] | \ge d \sqrt{n} \log^2 n \vert \cF_t ) \mathbf{1}(T = t).
\end{equation}
To bound the probability on the right hand side, we start by bounding $T(v)$.
Lemma \ref{lemma:infection_time_upper_bound} implies that $T(v) - t$ is stochastically dominated by a $\mathrm{Exp}( \lambda \sqrt{n} \log n)$ random variable. Hence
\begin{equation}
\label{eq:T_v_bound}
\p \left( \left. T(v) \ge t + \frac{1}{\lambda \sqrt{n}} \right \vert \cF_t \right) \le \frac{1}{n}.
\end{equation}
We can now bound the probability of interest as follows.
\begin{align}
\p ( | \cN(v) \cap \cascade[t, T(v)] | \ge d \sqrt{n } \log^2 n | \cF_t ) & \stackrel{(a)}{\le} \p (T(v) \ge t' \vert \cF_t ) + \p ( | \cN(v) \cap \cascade[t, t'] | \ge d \sqrt{n} \log^2 n \vert \cF_t ) \nonumber \\
\label{eq:Ev_bound_2}
& \stackrel{(b)}{\le} \frac{1}{n} + \frac{ \E [ | \cN(v) \cap \cascade[t, t'] | \vert \cF_t]}{d \sqrt{n} \log^2 n}.
\end{align}
Above, $(a)$ is due to a union bound, and since $| \cN(v) \cap \cascade[t, T(v)] | \le | \cN(v) \cap \cascade[t, t']|$ whenever $T(v) < t'$. The inequality $(b)$ is due to \eqref{eq:T_v_bound} and Markov's inequality.

We proceed by bounding $\E [ | \cN(v) \cap \cascade[t, t'] | \vert \cF_t ]$; to do so, we study the probability that a given $u \in \cN(v) \setminus \cascade(t)$ is infected in the interval $[t, t']$. If $u \in \highdeg(G)$, we trivially bound the probability by 1. If $u \notin \highdeg(G)$, $\deg(u) \le d$ so by Lemma \ref{lemma:infection_time_lower_bound}, $T(u) - t$ stochastically dominates a $\mathrm{Exp} ( \lambda d)$ random variable. As a result, for such $u$, we have that
\[
\p ( T(u) \in [t, t'] \vert \cF_t ) \le 1 - e^{- \lambda (t' - t)} \le \frac{d}{\sqrt{n}}.
\]
In the final inequality above, we have used $1 - e^{-x} \le x$ as well as the definition of $t'$. It follows that
\begin{equation}
\label{eq:Ev_bound_3}
\E [ | \cN(v) \cap \cascade[t, t'] | \vert \cF_t] = \sum_{u \in \cN(v) \setminus \cascade(t)} \p ( T(u) \in [t, t'] \vert \cF_t ) \le m + d \sqrt{n} \le 2 d \sqrt{n}.
\end{equation}
In the first inequality above, we have used that there are at most $m$ high-degree vertices in $\cN(v)$ and at most $n$ low-degree vertices.
Together, \eqref{eq:Ev_bound_1}, \eqref{eq:Ev_bound_2} and \eqref{eq:Ev_bound_3} show that 
\[
\p ( \cE_v \vert \cF_t ) \mathbf{1}( T = t) \le \frac{3}{\log^2 n} \mathbf{1}(T = t).
\]
Since $T < \infty$ almost surely (since all vertices become infected in finite time), it holds that
\[
\p ( \cE_v \vert \cF_T ) = \int_0^\infty \p ( \cE_v \vert \cF_t ) \mathbf{1}(T = t) dt \le \frac{3}{\log^2 n}.
\]
Taking an expectation over $\cF_T$ shows that $\p ( \cE_v ) \le 3/ \log^2 (n)$. Finally, we note that 
\[
\cA^c = \bigcup_{v \in V: \deg(v) \ge 2d \sqrt{n} \log^2 n} \cE_v,
\]
hence a union bound over these at most $m$ events shows that
\[
\p( \cA^c) \le \frac{3m}{\log n} = o(1).
\]
\end{proof}

As the following result shows, the event $\cA$ is crucially used to obtain sharper characterizations of infection times than what Lemma \ref{lemma:infection_time_lower_bound} can provide, especially for high-degree vertices.

\begin{lemma}
\label{lemma:T_v_small_interval}
Let $v \in V$ and let $\delta > 0$ satisfy $\lambda^2 \delta^2 m n \le 1$. Fix $t \ge 0$, and let $v \in V \setminus \cascade(t)$. Then 
\[
\p \left(\left.  \{ T(v) \in [t, t + \delta ] \} \cap \cA \right \vert \cF_t \right) \le 3d \lambda \delta \sqrt{n} \log^2 (n).
\]
\end{lemma}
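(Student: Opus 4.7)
The plan is to use a stopping time argument combined with the intensity representation of the SI process. The key insight is that on the event $\cA$, the count $|\cN(v) \cap \cascade(s)|$ stays below $2d\sqrt{n}\log^2 n$ all the way up to $T(v)$, which will let me directly control the cumulative transition rate into $v$ during $[t,t+\delta]$.

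First, I would introduce the $\cF_s$-stopping time
\[
\tau := \inf\{s \ge t : |\cN(v) \cap \cascade(s)| > 2d\sqrt{n}\log^2 n\}.
\]
Since $s \mapsto |\cN(v) \cap \cascade(s)|$ is non-decreasing, the $v$-component of $\cA$ forces $\tau > T(v)$: otherwise $|\cN(v) \cap \cascade(T(v))| \ge |\cN(v) \cap \cascade(\tau)| > 2d\sqrt{n}\log^2 n$, contradicting $\cA$. This observation yields the set containment
\[
\{T(v) \in [t, t+\delta]\} \cap \cA \;\subseteq\; \{T(v) \le (t+\delta) \wedge \tau\},
\]
which reduces the task to bounding the probability of the right-hand event.

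For that bound I would invoke the Doob--Meyer decomposition of the counting process $N^{(v)}_s := \mathbf{1}(v \in \cascade(s))$. By \eqref{eq:SI}, its compensator is $\int_0^s \lambda |\cN(v) \cap \cascade(u)|\,\mathbf{1}(T(v) > u)\,du$. Applying optional stopping at the bounded stopping time $(t+\delta) \wedge \tau$, and using $N^{(v)}_t = 0$ since $v \notin \cascade(t)$, yields
\[
\p\!\left(T(v) \le (t+\delta) \wedge \tau \,\big|\, \cF_t\right) = \lambda\,\E\!\left[\int_t^{(t+\delta) \wedge \tau} |\cN(v) \cap \cascade(u)|\,\mathbf{1}(T(v) > u)\,du \,\Big|\, \cF_t\right].
\]
For every $u < \tau$, the definition of $\tau$ gives $|\cN(v) \cap \cascade(u)| \le 2d\sqrt{n}\log^2 n$, so the integral is at most $2d\delta\sqrt{n}\log^2 n$. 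This produces the bound $2d\lambda\delta\sqrt{n}\log^2 n \le 3d\lambda\delta\sqrt{n}\log^2 n$, as required.

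The main technical point requiring care is the compensator / optional-stopping argument, but this is standard for continuous-time jump processes with bounded jumps. The stopping time $\tau$ is the essential device here, since it converts the future-dependent event $\cA$ into a deterministic pointwise bound on the integrand. I note that my argument does not appear to use the hypothesis $\lambda^2 \delta^2 m n \le 1$; this condition may be a technical convenience for downstream applications (e.g., a union bound over many vertices $v$) rather than something intrinsic to the statement of this lemma.
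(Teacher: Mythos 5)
Your proof is correct and takes a genuinely different route than the paper's. The paper replaces $\cA$ with the $\cF_t$-measurable superset $\cA_t$ (the condition $|\cN(w)\cap\cascade(t)| \le 2d\sqrt{n}\log^2 n$ for all not-yet-infected $w$), then bounds $\p(T(v) \le t+\delta \mid \cF_t)\mathbf{1}(\cA_t)$ by a case split on $\deg(v)$: for low-degree $v$ it uses the stochastic lower bound $T(v)-t \succcurlyeq \mathrm{Exp}(\lambda d)$, and for high-degree $v$ it invokes the First Passage Percolation representation (Proposition~\ref{prop:fpp}), splits paths by length, bounds the length-$1$ contribution via $\cA_t$ and the length-$\ge 2$ contribution via a union bound over $|\cP_2(v)| \le (d+m)n$ paths of weight $\sim\mathrm{Gamma}(2,\lambda)$, and it is precisely this last term that requires the hypothesis $\lambda^2\delta^2 mn \le 1$. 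Your proof sidesteps all of this: by stopping the counting process $N^{(v)}_s=\mathbf{1}(v\in\cascade(s))$ at $\tau$, you enforce the bound $|\cN(v)\cap\cascade(u)|\le 2d\sqrt{n}\log^2 n$ \emph{dynamically} throughout the whole interval $[t,(t+\delta)\wedge\tau)$ rather than only at the left endpoint $t$, which removes the need for the length-$\ge 2$ path term and the case split. You are also right that, as a consequence, your argument does not use $\lambda^2\delta^2 mn\le 1$ at all; the hypothesis is an artifact of the paper's path-decomposition method. Both arguments yield the same bound (yours gives the slightly cleaner constant $2$ instead of $3$), but your stopping-time/compensator approach is arguably the more natural one, since $\tau$ is exactly the device that converts the non-$\cF_t$-measurable event $\cA$ into a pointwise bound on the intensity.
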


\begin{proof}
Define the event
\[
\cA_t : = \left \{ \text{For all $v \in V$ such that $T(v) > t$, it holds that $| \cN(v) \cap \cascade (t) | \le 2d \sqrt{ n } \log^2 n$} \right \}.
\]
Notice that $\cA_t$ is $\cF_t$-measurable, and $\cA \subset \cA_t$. Moreover, for $v$ such that $T(v) > t$,
\begin{equation}
\label{eq:T_v_E_bound}
\p \left( \{ T(v) \le t + \delta \} \cap \cA \vert \cF_t \right) \le \p \left( \{ T(v) \le t + \delta \} \cap \cA_t \vert \cF_t \right) = \p ( T(v) \le t + \delta \vert \cF_t ) \mathbf{1}(\cA_t).
\end{equation}
We will therefore proceed by bounding $\p ( T(v) \le t + \delta \vert \cF_t)$ under the assumption that $\cA_t$ holds. 

Suppose that $\deg(v) \le d$. Then by Lemma \ref{lemma:infection_time_lower_bound}, $T(v) - t$ stochastically dominates a $\mathrm{Exp}( \lambda d)$ random variable, so that
\begin{equation}
\label{eq:T_v_bound_low_degree}
\p ( T(v) \le t + \delta \vert \cF_t ) \le 1 - e^{ - \delta \lambda d} \le \delta \lambda d.
\end{equation}
Next, suppose that $\deg(v) \ge D$, and let $\cP_2(v)$ be the set of paths of length 2 starting from $v$. Then by Proposition \ref{prop:fpp},
\begin{align}
\label{eq:T_v_lower_bound_1}
T(v) - t & = \min_{P \in \cP(v,t) } \weight(P) \\
\label{eq:T_v_lower_bound_2}
& \ge \min \left \{ \min_{u \in \cN(v) \cap \cascade (t) } F(u,v) , \min_{P \in \cP_2(v) } \weight(P) \right \},
\end{align}
which can be explained as follows. If the minimizing path $P \in \cP(v,t)$ in \eqref{eq:T_v_lower_bound_1} has length 1, then the minimum is achieved by $F(u,v)$ for some $u \in \cN(v) \cap \cascade (t)$. Otherwise, the minimizing path in \eqref{eq:T_v_lower_bound_1} has length at least 2, which must include a path of length 2 starting at $v$. In light of \eqref{eq:T_v_lower_bound_2},
\begin{align}
\p ( T(v) \le t + \delta \vert \cF_t ) & \le \p \left( \left. \min \left \{ \min_{u \in \cN(v) \cap \cascade (t) } F(u,v) , \min_{P \in \cP_2(v) } \weight(P) \right \} \le \delta \right \vert \cF_t \right) \nonumber \\
& \le \sum_{u \in \cN(v) \cap \cascade (t) } \p ( F(u,v) \le \delta  \vert \cF_t ) + \sum_{P \in \cP_2(v) } \p ( \weight(P) \le \delta \vert \cF_t ) \nonumber \\
& \stackrel{(a)}{\le} | \cN(v) \cap \cascade (t) | \lambda \delta + | \cP_2(v) | \lambda^2 \delta^2 \nonumber \\
& \stackrel{(b)}{\le} 2d \lambda \delta \sqrt{n} \log^2 (n) + (dn + mn) \lambda^2 \delta^2  \nonumber \\
\label{eq:T_v_bound_high_degree}
& \stackrel{(c)}{\le} 3d \lambda \delta \sqrt{n} \log^2 (n),
\end{align}
where the inequality $(a)$ follows since $F(u,v) \sim \mathrm{Exp}(\lambda)$ and $\weight(P) \sim \mathrm{Gamma}(2, \lambda)$ for $P \in \cP_2(v)$. The inequality $(b)$ uses the bound $| \cN(v) \cap \cascade (t) | \le 2d \sqrt{n} \log^2 n$ which holds on the event $\cA_t$, and also uses that the number of paths of length 2 starting from $v$ is at most $dn + mn$, since there are at most $n$ neighbors of degree at most $d$ and at most $m$ neighbors of degree at most $n$.
Finally, $(c)$ holds by our assumption that $\lambda^2 \delta^2 m n \le 1$.
\end{proof}

A useful consequence of Lemma \ref{lemma:T_v_small_interval} is a bound on the probability that $T(u)$ and $T(v)$ are close, for distinct vertices $u,v$.

\begin{lemma}
\label{lemma:temporal_separation}
Let $u,v \in V$ be distinct vertices and let $\delta > 0$ satisfy $\lambda^2 \delta^2 n \le 1$. Then 
\[
\p ( \{ |T(u) - T(v) | \le \delta \} \cap \cA ) \le 6d \lambda \delta \sqrt{n} \log^2(n).
\]
\end{lemma}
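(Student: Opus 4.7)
The plan is to reduce the lemma to Lemma \ref{lemma:T_v_small_interval} by conditioning on which of $u,v$ becomes infected first, and then applying Lemma \ref{lemma:T_v_small_interval} at the (random) infection time of that vertex. Specifically, since $u \neq v$ and the cascade is continuous-time, almost surely $T(u) \neq T(v)$, so
\[
\{|T(u) - T(v)| \le \delta\} = \{T(u) < T(v) \le T(u) + \delta\} \cup \{T(v) < T(u) \le T(v) + \delta\}
\]
up to a null set. By the $u \leftrightarrow v$ symmetry of the two events, it suffices to bound the probability of the first event intersected with $\cA$ and then double the bound.

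Next, I would condition on $\cF_{T(u)}$. Note that $T(u)$ is a stopping time of the natural filtration, and on the event $\{T(u) < T(v)\}$ we have $v \in V \setminus \cascade(T(u))$. Using Proposition \ref{prop:fpp} (the FPP representation), the edge weights $\{F(e)\}$ that govern infections after time $T(u)$ are independent of $\cF_{T(u)}$, so the entire proof of Lemma \ref{lemma:T_v_small_interval} carries over verbatim with the deterministic time $t$ replaced by the stopping time $T(u)$. This yields
\[
\p\!\left(\{T(v) \le T(u) + \delta\} \cap \cA \,\big|\, \cF_{T(u)}\right)\mathbf{1}\{T(u) < T(v)\} \le 3 d \lambda \delta \sqrt{n}\log^2(n),
\]
as long as the hypothesis $\lambda^2 \delta^2 m n \le 1$ of Lemma \ref{lemma:T_v_small_interval} is in force. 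Since $m$ is a constant under Assumption \ref{as:G_conditions}, the assumption $\lambda^2 \delta^2 n \le 1$ in the statement here implies (up to a constant that can be absorbed in the final $6$) the hypothesis we need. Taking expectations over $\cF_{T(u)}$ and then adding the symmetric bound gives the claimed $6 d \lambda \delta \sqrt{n} \log^2(n)$.

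The main subtlety, and essentially the only thing that requires care, is the transfer of Lemma \ref{lemma:T_v_small_interval} from a deterministic $t$ to the random stopping time $T(u)$. There are two clean ways to handle this: (i) integrate the bound of Lemma \ref{lemma:T_v_small_interval} against the law of $T(u)$, which works because the lemma's bound is uniform in $t$; or (ii) invoke the strong Markov property directly, justified by the fact that the residual FPP edge weights are i.i.d.\ $\mathrm{Exp}(\lambda)$ and independent of $\cF_{T(u)}$. Either way, no new probabilistic estimates are needed beyond what is already established, so the proof is short once the decomposition and conditioning are set up.
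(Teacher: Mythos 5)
Your proposal is correct and follows essentially the same route as the paper: decompose according to which of $u,v$ is infected first (the paper uses a WLOG; you make the symmetric decomposition explicit), condition on $\cF_{T(u)}$, apply Lemma~\ref{lemma:T_v_small_interval} on the slice $\{T(u)=t\}$, then integrate against the law of $T(u)$ and double the bound.

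One small inaccuracy worth flagging: you write that $\lambda^2\delta^2 n \le 1$ ``implies'' $\lambda^2\delta^2 m n \le 1$. It does not --- the implication runs the other way when $m\ge 1$. What is actually true, and what you should say, is that the conclusion of Lemma~\ref{lemma:T_v_small_interval} still holds under the weaker hypothesis $\lambda^2\delta^2 n \le 1$: in step $(c)$ of that lemma's proof one needs $(dn+mn)\lambda^2\delta^2 \le d\lambda\delta\sqrt{n}\log^2 n$, and from $\lambda\delta \le n^{-1/2}$ one gets $(dn+mn)\lambda^2\delta^2 \le (d+m)\lambda\delta\sqrt{n}$, which is $\le d\lambda\delta\sqrt{n}\log^2 n$ for $n$ large since $m$ is constant. (The paper's own proof of Lemma~\ref{lemma:temporal_separation} silently elides this hypothesis check as well, so your proposal is, if anything, more forthcoming about it.)
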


\begin{proof}
Suppose without loss of generality that $T(u) < T(v)$, so that $|T(u) - T(v) | \le \delta$ is equivalent to $T(v) \in [T(u), T(u) + \delta]$. Then by Lemma \ref{lemma:T_v_small_interval}, 
\begin{align*}
\p ( \{ T(v) \in [T(u), T(u) + \delta] \} \cap \cA \vert \cF_t ) \mathbf{1}( T(u) = t) & = \p ( \{ T(v) \in [t, t + \delta] \} \cap \cA \vert \cF_t ) \mathbf{1}(T(u) = t) \\
& \le 3d \lambda \delta \sqrt{n} \log^2 (n).
\end{align*}
Since $T(u) < \infty$ almost surely, we have that 
\begin{align*}
\p ( \{ T(v) \in [T(u), T(u) + \delta] \} \cap \cA \vert \cF_{T(u)} ) & = \int_0^\infty \p ( \{ T(v) \in [T(u), T(u) + \delta] \} \cap \cA \vert \cF_t ) \mathbf{1}( T(u) = t) dt \\
& \le 3d \lambda \delta \sqrt{n} \log^2 (n).
\end{align*}
Finally, the claim follows from taking an expectation over $\cF_{T(u)}$.
\end{proof}

\begin{lemma}
\label{lemma:infections_poisson_bound}
For any $0 \le t_1 < t_2 <\infty$, $| \cascade(t_2) \setminus \cascade(t_1) |$ is stochastically bounded by a $\mathrm{Poi} ( \lambda mnd (t_2 - t_1))$ random variable.
\end{lemma}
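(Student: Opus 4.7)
The plan is to show that the instantaneous rate of new infections in the SI process is uniformly bounded by $\lambda mnd$, and then to couple the infection stream with a homogeneous Poisson process of this rate; stochastic domination follows immediately.

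First I would obtain the uniform rate bound. By \eqref{eq:J_definition}, conditional on $\cF_t$, new infections arrive at instantaneous rate $\lambda \cut(\cascade(t))$, and trivially $\cut(S) \le |E|$ for every $S \subseteq V$. Since $G \in \cG(n,m,d,D)$ has at most $m$ vertices whose degree is bounded (trivially) by $n$ and the remaining $\ge n - m$ vertices of degree at most $d$,
\[
2|E| \;=\; \sum_{v \in V} \deg(v) \;\le\; mn + (n-m)d \;\le\; n(m+d).
\]
For $m, d \ge 1$ the elementary inequality $m + d \le 2md$ yields $|E| \le mnd$, so $\lambda \cut(\cascade(t)) \le \lambda mnd$ uniformly in $t$ and in the history $\cF_t$.

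Next I would conclude via a standard uniformization coupling. Construct a homogeneous Poisson process $N$ of rate $\lambda mnd$ on $[0,\infty)$, and at each event time $t$ independently ``accept'' with probability $\cut(\cascade(t^-))/(mnd) \in [0,1]$; if accepted, the newly infected vertex $v \notin \cascade(t^-)$ is chosen with probability $|\cN(v) \cap \cascade(t^-)|/\cut(\cascade(t^-))$, and otherwise nothing changes. Conditional on a Poisson event from state $S$, the probability of adding $v \notin S$ is $|\cN(v)\cap S|/(mnd)$, so the overall rate of adding $v$ equals $\lambda mnd \cdot |\cN(v) \cap S|/(mnd) = \lambda |\cN(v) \cap S|$, matching \eqref{eq:SI}. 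Since every new infection corresponds to some event of $N$, one has $|\cascade(t_2) \setminus \cascade(t_1)| \le N[t_1, t_2] \sim \mathrm{Poi}(\lambda mnd (t_2 - t_1))$, as required.

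There is no substantive obstacle here; the argument is essentially bookkeeping. The only places warranting a moment of care are the arithmetic step $m + d \le 2md$ (valid whenever $m, d \ge 1$, which is consistent with Assumption \ref{as:G_conditions} and the presence of at least one high-degree vertex) and the verification that the uniformized construction has the correct SI law, which is classical for continuous-time Markov chains on a finite state space with a uniform upper bound on jump rates.
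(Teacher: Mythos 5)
Your proof is correct, but it takes a genuinely different route from the paper's. The paper works through its FPP representation (Proposition \ref{prop:fpp}): it lets $\tau_k$ denote the $k$th post-$t_1$ infection time and proves by induction that, conditionally on $\cF_{t_1}$, the increments $\tau_{k+1} - \tau_k$ stochastically dominate i.i.d.\ $\mathrm{Exp}(\lambda|E|)$ variables, so the infection stream is dominated eventwise by a homogeneous Poisson process of rate $\lambda|E|$, and then bounds $|E|$. You instead invoke the classical uniformization/thinning construction: fix a Poisson clock of rate $\lambda mnd$, accept each tick with state-dependent probability $\cut(\cascade(t^-))/(mnd)$, verify the accepted process has the correct SI generator, and observe that infections are a subset of the clock ticks. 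Both are valid. Your approach is more standard and arguably cleaner for anyone familiar with CTMC uniformization; the paper's approach is more self-contained, leaning on the FPP machinery it has already set up and avoiding the need to assert the correctness of the uniformized chain. One small point in your favor: your edge-count bookkeeping ($2|E| = \sum_v \deg(v) \le n(m+d) \le 2mnd$, so $|E| \le mnd$ using $m+d \le 2md$ for $m,d\ge 1$) is actually tighter and cleaner than the paper's chain $|E| \le \sum_v \deg(v) \le nd+mn \le mnd$, whose last step would require $\tfrac{1}{m}+\tfrac{1}{d}\le 1$; neither gap matters asymptotically, but your derivation needs the weaker hypothesis.
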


\begin{proof}
Recall that $|E|$ is the number of edges in the graph, and let $A_1, A_2, \ldots$ be a sequence of i.i.d. $\mathrm{Exp}(\lambda |E|)$ random variables. As a shorthand, for a positive integer $k$, let $\tau_k$ denote the time at which the $k$th vertex is infected, after time $t_1$. We claim that it suffices to show that, conditioned on $\cF_{t_1}$, 
\begin{equation}
\label{eq:infection_time_stoch_lower_bound}
\tau_k - t_1 \succcurlyeq \sum_{i = 1}^k A_i.
\end{equation}
With \eqref{eq:infection_time_stoch_lower_bound} in hand, it is readily seen that the arrival times of vertices after time $t_1$ stochastically bounds the arrival times of a homogeneous Poisson point process with rate $\lambda |E|$. As a result, $| \cascade(t_2) \setminus \cascade(t_1)|$ is stochastically bounded by a $\mathrm{Poi}( \lambda |E| (t_2 - t_1))$ random variable. Finally, the claim follows since we can upper bound 
\[
|E| \le \sum_{v \in V} \deg(v) \le nd + mn \le mnd,
\]
where the second inequality above follows since there are at most $n$ vertices of degree at most $d$ and there are $m$ vertices of degree at most $n$.

It remains to prove \eqref{eq:infection_time_stoch_lower_bound}; we do so by induction. Let $E(t_1) \subset E$ be the set of edges where one endpoint is in $\cascade(t)$ and the other is not in $\cascade(t)$. Since the next infection must be a vertex which neighbors a currently infected vertex, we have by Proposition \ref{prop:fpp} that $\tau_1 - t_1 \ge \min_{e \in E(t_1)} F(e)$, where we recall that $\{F(e) \}_{e \in E}$ is a collection of i.i.d. $\mathrm{Exp}(\lambda)$ random variables. Notice that by properties of Exponential random variables, $\min_{e \in E(t_1)} F(e)$ is equal in distribution to a $\mathrm{Exp}(\lambda |E(t_1)|)$ random variable, which in turn stochastically bounds a $\mathrm{Exp}(\lambda |E| )$ random variable. This proves the base case. 

Now let $k \ge 1$, and assume that \eqref{eq:infection_time_stoch_lower_bound} holds for $k$. We may assume without loss of generality that $\tau_{k + 1} < \infty$ almost surely. Otherwise, since $G$ is connected, this must mean that there are no more infection events that can happen after the $k$th infection. In this case, $\tau_{k + 1} = \infty$, hence \eqref{eq:infection_time_stoch_lower_bound} trivially holds with $k$ replaced with $k + 1$. Let us now proceed with the assumption that $\tau_{k + 1} < \infty$ almost surely. Fix $s \ge t_1$, condition on $\cF_s$, and let us assume that $\tau_k = s$ (an event that is $\cF_s$-measurable). Repeating the same arguments above, we see that, conditioned on $\cF_s$ and assuming $\tau_k = s$, we have that $\tau_{k + 1} - s$ stochastically dominates a $\mathrm{Exp}(\lambda |E|)$ random variable. Hence, letting $x \ge 0$ and $A \sim \mathrm{Exp}( \lambda |E|)$, we have that
\[
\p ( A \ge x ) \mathbf{1}(\tau_k = s) \le \p ( \tau_{k + 1} - s \ge x \vert \cF_s ) \mathbf{1}(\tau_k = s) = \p ( \tau_{k + 1} - \tau_k \ge x \vert \cF_s ) \mathbf{1}(\tau_k = s).
\]
Taking an integral over $0 \le s < \infty$, we have that
\[
\p ( A \ge x) \le \int_0^\infty \p ( \tau_{k + 1} - \tau_k \ge x \vert \cF_s ) \mathbf{1} ( \tau_k = s) ds = \p ( \tau_{k + 1} - \tau_k \ge x \vert \cF_{\tau_k}).
\]
In particular, $\tau_{k + 1} - \tau_k \succcurlyeq A$ conditioned on $\tau_k$. As $A$ is independent of $\tau_k$ and \eqref{eq:infection_time_stoch_lower_bound} holds, it immediately follows that \eqref{eq:infection_time_stoch_lower_bound} also holds with $k$ replaced with $k + 1$. 
\end{proof}

\section{Analysis of Algorithm 1: Proof of Theorem \ref{thm:alg_3/4}}
\label{sec:alg_proof}

The following lemma, which characterizes when $\widehat{\deg}_\delta(v)$ is a good approximation for $\deg(v)$, is the main result of this section. 

\begin{lemma}
\label{lemma:degree_concentration}
Suppose that $\beta, \gamma \in (0,1)$ satisfy $\gamma > \max \{ 1 - \beta, (1 + \beta) / 2, \beta \}$, and let $\delta : = n^{-\beta}$. It holds with probability $1 - o(1)$ that for all $v \in V$, at least one of the following conditions is true:
\begin{enumerate}
    \item[(C1)] $\left| \widehat{\deg}_\delta(v) - \lambda \deg(v) \right| \le n^\gamma$.
    \item [(C2)] There exists $u \in \highdeg(G) \setminus \{v \}$ such that $|T(v) - T(u) | < n^{-\beta}$.
\end{enumerate}
\end{lemma}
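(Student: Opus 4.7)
The plan is to use the identity $J(T(v)) - J(T(v)^-) = \lambda \deg(v) - 2\lambda|\cN(v) \cap \cascade(T(v))|$ from \eqref{eq:J} as the target, and show that the two discrete derivatives comprising $\widehat{\deg}_\delta(v)$ closely track $J(T(v))$ and $J(T(v)^-)$ respectively. I would work on the event $\cA$ from Definition \ref{def:A}, which holds with probability $1-o(1)$ by Lemma \ref{lemma:A}; on $\cA$ the correction $2\lambda |\cN(v) \cap \cascade(T(v))|$ is $n^{1/2+o(1)}$, comfortably below $n^\gamma$ since $\gamma > (1+\beta)/2 > 1/2$. Fix $v \in V$ and assume (C2) fails, namely no other high-degree vertex is infected within $\delta$ of $T(v)$. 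It then suffices to show that both of
\[
\frac{I[T(v), T(v)+\delta]}{\delta} - J(T(v)) \quad \text{and} \quad \frac{I[T(v)-\delta, T(v)]}{\delta} - J(T(v)^-)
\]
are $o(n^\gamma)$ with probability $1 - n^{-\omega(1)}$, so that a final union bound over $v \in V$ proves the lemma.

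For the forward interval I would decompose the error as a bias term plus a martingale fluctuation:
\[
\frac{I[T(v), T(v)+\delta]}{\delta} - J(T(v)) \;=\; \frac{1}{\delta}\int_{T(v)}^{T(v)+\delta} \bigl(J(s) - J(T(v))\bigr)\,ds \;+\; \frac{M_\delta}{\delta},
\]
where $M_s := I[T(v), T(v)+s] - \int_{T(v)}^{T(v)+s} J(r)\,dr$ is a martingale in $s$ with respect to $\{\cF_{T(v)+s}\}_{s \ge 0}$, using that $T(v)$ is a stopping time and invoking the strong Markov property implicit in Proposition \ref{prop:fpp}. Because (C2) fails for $v$, every infection in $(T(v), T(v)+\delta]$ is at a low-degree vertex, so by \eqref{eq:J} the jumps of $J$ in this window are each of size at most $3\lambda d = n^{o(1)}$; Lemma \ref{lemma:infections_poisson_bound} together with Poisson tail bounds yields $n^{1 - \beta + o(1)}$ such infections with probability $1 - n^{-\omega(1)}$, so $|J(s) - J(T(v))| = n^{1-\beta+o(1)}$ throughout, bounding the bias term by $n^{1-\beta + o(1)} < n^\gamma$ using $\gamma > 1 - \beta$. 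For $M_\delta$, Freedman's inequality applied with predictable quadratic variation $\int J(r)\,dr \le \lambda|E|\delta = n^{1-\beta+o(1)}$ and unit jumps gives $|M_\delta| = n^{(1-\beta)/2 + o(1)}$ with probability $1 - n^{-\omega(1)}$; dividing by $\delta$ yields fluctuation $n^{(1+\beta)/2 + o(1)} < n^\gamma$ using $\gamma > (1+\beta)/2$.

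The backward interval $[T(v)-\delta, T(v)]$ is handled similarly, with one twist: $T(v)-\delta$ is not a stopping time. I would either discretize the time axis on a fine grid (say of spacing $n^{-C}$ for large $C$), apply the same bias-plus-martingale analysis at each grid point and union-bound, then approximate $T(v)-\delta$ by the closest grid point; or, equivalently, rerun the analysis on the length-$\delta$ window ending at the stopping time $T(v)$ by time-reversal. Under (C2) failure no high-degree vertex other than $v$ is infected in $[T(v)-\delta, T(v))$, so the same estimates go through and give $|I[T(v)-\delta, T(v)]/\delta - J(T(v)^-)| = n^{\max\{1-\beta,\, (1+\beta)/2\} + o(1)} < n^\gamma$.

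The main obstacle I anticipate is handling the random time $T(v)$ rigorously, particularly in the backward direction where $T(v)-\delta$ is not a stopping time and the clean strong Markov argument does not directly apply. A secondary subtlety is that the failure of (C2) is measurable with respect to the full trace rather than $\cF_{T(v)}$; however, the forward-interval bias bound only requires that no high-degree vertex be infected in $(T(v), T(v)+\delta]$, a statement whose probability can be controlled via Lemma \ref{lemma:temporal_separation} and absorbed into the same union bound over $v$ and the at most $m$ high-degree vertices.
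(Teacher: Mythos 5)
Your approach has the right ingredients and ultimately converges to the paper's, but you route through two stopping-time arguments that the paper deliberately avoids, and one of your two proposed fixes for the backward interval would not work.

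The paper does not attempt to control $I[T(v),T(v)+\delta]/\delta - J(T(v))$ directly at the random time $T(v)$. Instead, Lemma~\ref{lemma:second_derivative_approximation} proves the concentration bound $\p(\{|\hatderivative(t) - \derivative(t)| \ge n^\gamma\} \cap \cB_t) \le \exp(-n^{2\gamma-(1+\beta)+o(1)})$ at every \emph{fixed deterministic} $t$, where the ``no nearby high-degree infection'' event $\cB_t$ enters only as an intersection, not as a conditioning. Lemma~\ref{lemma:cut_approximation} (the Freedman step) in fact needs no reference to $\cB_t$ at all; $\cB_t$ is used only in Lemma~\ref{lemma:cut_changes} to ensure every jump of $\cut(\cascade(\cdot))$ in the $\delta$-window has size $\le d$. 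Lemma~\ref{lemma:discretization} then introduces a grid $\mathbb{T}_n$ of spacing $n^{-5}$, and a union bound over the $\sim n^7$ grid points together with a pathwise continuity argument transfers the fixed-time bound to a uniform-in-$t$ bound, which is finally evaluated at $T(v)$. This cleanly sidesteps the measurability issues you flag: there is never a need to condition on a non-$\cF_t$-measurable event, nor to run a martingale from a stopping time.

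Your forward-direction argument at the stopping time $T(v)$ can probably be made rigorous via the strong Markov property (and you correctly decouple the bias bound, which only needs the pathwise event that no high-degree vertex is infected in $(T(v),T(v)+\delta]$, from the Freedman bound, which needs no such event), but it is more delicate than necessary and buys you nothing: as you yourself observe, the backward interval forces you into a grid-discretization argument anyway. Once you have that discretization, applying it symmetrically to both intervals is simpler and is exactly what the paper does. Your alternative fix for the backward interval, ``time-reversal,'' is a dead end: the SI process on a general graph has no tractable time-reversal, and there is no stopping time playing the role of $T(v)-\delta$ to anchor a reversed martingale. You should discard that option and commit to discretization throughout. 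With that adjustment, your plan and the paper's proof are essentially the same argument; the bias-versus-fluctuation split, the use of $\cA$, the Poisson/Bernstein bound via Lemma~\ref{lemma:infections_poisson_bound}, the Freedman inequality, and the final thresholds $\gamma > 1-\beta$ and $\gamma > (1+\beta)/2$ all match.
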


Before proving Lemma \ref{lemma:degree_concentration}, we show how it can be used to prove Theorem \ref{thm:alg_3/4}. We first analyze what can be learned from a single cascade.

\begin{lemma}
\label{lemma:single_cascade}
Suppose that $\alpha, \beta \in (0,1)$ satisfy $\alpha > \max \{ 1 - \beta, (1 + \beta ) / 2 , \beta \}$ and $\beta > 1/2$.
Define 
\begin{equation}
\label{eq:HD_estimate}
S : = \left \{ v : \widehat{\deg}(v) \ge \frac{n^\alpha}{\log n} \right \}.
\end{equation}
Then with probability $1 - o(1)$,
\begin{equation}
\label{eq:high_degree_inclusions}
\highdeg(G) \subseteq S \subseteq \left \{ u \in V : \exists v \in \highdeg(G) \text{ with } |T(u) - T(v) | \le \frac{1}{n^\beta} \right \}.
\end{equation}
\end{lemma}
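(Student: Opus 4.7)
The plan is to invoke Lemma~\ref{lemma:degree_concentration} with a value $\gamma$ chosen strictly less than $\alpha$ but still larger than $\max\{1-\beta,(1+\beta)/2,\beta\}$, which is possible because $\alpha$ satisfies the same inequalities strictly. This produces an event of probability $1-o(1)$ on which every $v\in V$ satisfies either (C1) $|\widehat{\deg}(v)-\lambda\deg(v)|\le n^\gamma$, or (C2) there exists $u\in\highdeg(G)\setminus\{v\}$ with $|T(v)-T(u)|<n^{-\beta}$. All subsequent arguments will be carried out on this event.

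To establish $\highdeg(G)\subseteq S$, the key step is to rule out alternative (C2) for every $v\in\highdeg(G)$. By Lemma~\ref{lemma:temporal_separation} applied with $\delta=n^{-\beta}$, any two specified vertices have infection times within $n^{-\beta}$ on the event $\cA$ with probability at most $O(d\sqrt{n}\,n^{-\beta}\log^2 n)=O(n^{1/2-\beta+o(1)})$, which is $o(1)$ since $\beta>1/2$. A union bound over the $\binom{m}{2}=O(1)$ pairs inside $\highdeg(G)$ (together with Lemma~\ref{lemma:A} to control $\cA^c$) shows that no two high-degree vertices are temporally $n^{-\beta}$-close. On this sub-event, (C2) is vacuous for each $v\in\highdeg(G)$, so (C1) must hold, giving $\widehat{\deg}(v)\ge\lambda\deg(v)-n^\gamma\ge\lambda n^\alpha-n^\gamma\ge n^\alpha/\log n$ for large $n$, hence $v\in S$.

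For the reverse inclusion, let $v\in S$. If (C1) holds, then $\lambda\deg(v)\ge\widehat{\deg}(v)-n^\gamma\ge n^\alpha/\log n-n^\gamma$, and since the right-hand side grows faster than $\lambda d=n^{o(1)}$, the degree dichotomy built into Definition~\ref{def:G} forces $\deg(v)\ge D$, so $v\in\highdeg(G)$ and we may take $u=v$ as the required witness. If instead (C1) fails, then by the trichotomy in Lemma~\ref{lemma:degree_concentration} the alternative (C2) holds, and the witnessing vertex $u$ lies in $\highdeg(G)$ and satisfies $|T(u)-T(v)|<n^{-\beta}$ by construction, which is exactly the condition defining the right-hand set.

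The main obstacle is really confined to the forward inclusion: Lemma~\ref{lemma:degree_concentration} on its own permits $\widehat{\deg}(v)$ to be small for a high-degree $v$ when another high-degree vertex is infected almost simultaneously (its burst gets folded into the $I[T(v)-\delta,T(v)]$ term, cancelling the signal). The hypothesis $\beta>1/2$ is essential here because it is precisely what makes the temporal-separation bound from Lemma~\ref{lemma:temporal_separation} strong enough, when combined with the constant-size union bound over $\highdeg(G)\times\highdeg(G)$, to exclude this simultaneous-burst scenario with probability $1-o(1)$.
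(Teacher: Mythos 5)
Your proof is correct and follows essentially the same route as the paper's: invoke Lemma~\ref{lemma:degree_concentration} with a $\gamma$ sandwiched between $\max\{1-\beta,(1+\beta)/2,\beta\}$ and $\alpha$, then use Lemma~\ref{lemma:temporal_separation} together with a union bound over the $O(m^2)$ pairs in $\highdeg(G)$ (and Lemma~\ref{lemma:A} for $\cA^c$) to rule out option (C2) for high-degree vertices, which is where $\beta>1/2$ enters. The only difference is cosmetic: for the reverse inclusion you case-split on whether (C1) holds for $v\in S$ (and deduce $v\in\highdeg(G)$ from the degree dichotomy in Definition~\ref{def:G} when it does), whereas the paper case-splits directly on whether $v\in\highdeg(G)$; the two case analyses are logically equivalent.
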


\begin{proof}
Throughout this proof, we will assume that the event described in Lemma \ref{lemma:degree_concentration} holds. We may do so since this event holds with probability $1 - o(1)$.

We start by proving that the first inclusion in \eqref{eq:high_degree_inclusions} holds with probability $1 - o(1)$. To this end, define the event 
\[
\cE : = \left \{ \text{For all distinct $u,v \in \highdeg(G)$, it holds that $| T(u) - T(v) | > \frac{1}{n^\beta}$} \right \}.
\]
Suppose that $\cE$ holds, and choose $\gamma$ such that $\alpha > \gamma > \max \{ 1 - \beta, (1 + \beta)/2, \beta \}$.
Then for all $v \in \highdeg(G)$, we must have that $| \widehat{\deg}(v) - \lambda \deg(v) | \le n^\gamma$, since the condition (C2) in Lemma \ref{lemma:degree_concentration} cannot hold on $\cE$ as $v \in \highdeg(G)$. Hence
\begin{equation}
\label{eq:estimated_degree_lower_bound}
\widehat{\deg}(v) \ge \lambda \deg(v) - n^\gamma \ge \lambda n^\alpha - n^\gamma \ge \frac{n^\alpha}{\log n},
\end{equation}
where we have used the fact that $\lambda$ is constant with respect to $n$, and that $\alpha > \gamma$. Moreover, an important consequence of \eqref{eq:estimated_degree_lower_bound} is that $\highdeg(G) \subseteq S$. To complete the proof of the first inclusion in \eqref{eq:high_degree_inclusions}, we need to show that $\cE$ holds with probability $1 - o(1)$. To this end, let us recall the event $\cA$ (see Definition \ref{def:A}). We can bound
\begin{equation}
\label{eq:E_cap_A}
\p ( \cE^c \cap \cA) \le \sum_{u,v \in \highdeg(G) : u \neq v} \p \left( \left \{  | T(u) - T(v) | \le \frac{1}{n^\beta} \right \} \cap \cA \right) \stackrel{(a)}{\le} n^{- (\beta - 1/2) + o(1)} \stackrel{(b)}{=} o(1).
\end{equation}
Above, $(a)$ follows since there are at most $m^2 = n^{o(1)}$ terms in the summation, and each term in the summation can be bounded via Lemma \ref{lemma:temporal_separation}. The equality $(b)$ holds under our assumption that $\beta > 1/2$. By a union bound, we have that
\[
\p ( \cE^c) \le \p ( \cE^c \cap \cA ) + \p ( \cA^c) = o(1),
\]
where the bound on the right hand side is due to \eqref{eq:E_cap_A} and Lemma \ref{lemma:A}.

We now turn to the proof of the second inclusion in \eqref{eq:high_degree_inclusions}. As a shorthand, let us denote 
\[
S' : = \left \{ u \in V : \exists v \in \highdeg(G) \text{ with } | T(u) - T(v) | \le \frac{1}{n^\beta} \right \}.
\]
Let $u \in S$. If $u \in \highdeg(G)$, then we trivially have that $u \in S'$. On the other hand, if $u \notin \highdeg(G)$, then $\deg(u) \le n^{o(1)}$ so, under condition (C1) of Lemma \ref{lemma:degree_concentration}, it cannot be the case that $u \in S$. Thus, (C2) must hold, which implies that $u \in S'$ in this case as well. Both cases prove that $S \subseteq S'$.
\end{proof}

We are now ready to prove the main result of this section.

\begin{proof}[Proof of Theorem \ref{thm:alg_3/4}]
Let $S_i$ denote the version of the set in \eqref{eq:HD_estimate} corresponding to the $i$th cascade, for $i \in [K]$. In particular, we have that $\HDestimate = \bigcap_{i= 1}^K S_i$, where $\HDestimate$ is the output of Algorithm \ref{alg:second_derivative}.
With this notation in hand, it suffices to show that $\bigcap_{i = 1}^K S_i = \highdeg(G)$ with probability $1 - o(1)$.
By Lemma \ref{lemma:single_cascade}, we have with probability $1 - o(1)$ that $\bigcap_{i = 1}^K S_i \supseteq \highdeg(G)$, provided $K$ is constant with respect to $n$. In the remainder of the proof, we therefore focus on proving that $\bigcap_{i = 1}^K S_i \subseteq \highdeg(G)$ with high probability. 
It will be useful to define $\cA_i$ to be the event described in Definition \ref{def:A} which concerns the $i$th cascade, for each $i \in [K]$.

We proceed by bounding the probability that there exists a vertex $u$ satisfying $u \notin \highdeg(G)$ and $u \in S_i$, for a fixed $i \in [K]$. If there exists such a vertex, then by the characterization in Lemma \ref{lemma:single_cascade}, there must exist $v \in \highdeg(G)$ such that $|T_i(u) - T_i(v) | \le n^{-\beta}$. We have that
\begin{align*}
\p \left ( \left \{ u \in S_i \right \} \cap \cA_i \right ) & \le \p \left( \left \{ \exists v \in \highdeg(G) : |T_i(u) - T_i(v) | \le \frac{1}{n^\beta} \right \} \cap \cA_i \right) \\
& \le \sum_{v \in \highdeg(G)} \p \left( \left \{ |T_i(u) - T_i(v) | \le \frac{1}{n^\beta} \right \} \cap \cA_i \right) \\
& \le n^{- ( \beta - 1/2) + o(1)}.
\end{align*}
Above, the final inequality follows from Lemma \ref{lemma:temporal_separation}, and since $|\highdeg(G) | \le m = n^{o(1)}$. Next, by the independence of cascades, we have that for $u \notin \highdeg(G)$,
\[
\p \left( \left \{ u \in \bigcap_{i = 1}^K S_i \right \} \cap \bigcap_{i = 1}^K \cA_i \right) = \prod_{i = 1}^K \p \left( \left \{ u \in S_i \right \} \cap \cA_i \right) = n^{- K ( \beta - 1/2) + o(1)}.
\]
Putting everything together, we have that
\begin{align*}
\p \left( \left \{ \left| \left( \bigcap_{i = 1}^K S_i \right) \setminus \highdeg(G) \right| \ge 1 \right \} \cap \bigcap_{i = 1}^K \cA_i \right) & \le \E \left [ \left| \left( \bigcap_{i = 1}^K S_i \right) \setminus \highdeg(G) \right| \mathbf{1} \left( \bigcap_{i = 1}^K \cA_i \right) \right ] \\
& = \sum_{u \notin \highdeg(G)} \p \left( \left \{ u \in \bigcap_{i = 1}^K S_i \right \} \cap \bigcap_{i = 1}^K \cA_i \right) \\
& = n^{1 - K ( \beta - 1/2) + o(1)} \\
& = o(1),
\end{align*}
where $(a)$ is due to Markov's inequality, and $(b)$ holds whenever $K > 1 / (\beta - 1/2)$. Finally, the theorem follows from setting $\beta = \alpha - 1/4$, and by noting that $\p ( \bigcap_{i = 1}^K \cA_i ) = 1 - o(1)$ in light of Lemma \ref{lemma:A}.
\end{proof}

In the rest of this section, we will focus on proving Lemma \ref{lemma:degree_concentration}. For our analysis, it will be convenient to define the quantities
\begin{align}
\label{eq:empirical_derivative}
\hatderivative(t) &: = n^\beta  \left( I \left[ t, t + \frac{1}{n^\beta} \right] - I \left[t - \frac{1}{n^\beta}, t \right] \right) \\
\label{eq:expected_derivative}
\derivative(t) & : = \lambda \left( \cut ( \cascade(t)) - \cut ( \cascade(t^-)) \right).
\end{align}
In words, $\hatderivative(t)$ and $\derivative(t)$ are the discretized and expected second derivatives of the infection curve, respectively. Recall that we can relate these to the quantities of interest via $\widehat{\deg}_\delta(v) = \hatderivative(T(v))$ for $\delta = n^{-\beta}$ and $\derivative(T(v)) = \lambda ( \deg(v)- 2 | \cN(v) \cap \cascade(T(v)) | )$.

Our first intermediate result towards proving Lemma \ref{lemma:degree_concentration} controls the difference between $\hatderivative(t)$ and $\derivative(t)$ for a \emph{fixed} $t \ge 0$.

\begin{lemma}
\label{lemma:second_derivative_approximation}
Let $\beta, \gamma \in (0,1)$ satisfy $\gamma > \max \{ 1 - \beta, (1 + \beta) / 2 \}$. Let us also define the event
\begin{equation}
\label{eq:Bt}
\cB_t : = \left \{ \forall v \in \highdeg(G), T(v) \notin \left(t - \frac{1}{n^\beta}, t \right) \cup \left( t, t + \frac{1}{n^\beta} \right) \right \}.
\end{equation}
Then for any $t \ge 0$, it holds that
\[
\p \left( \left \{  \left | \hatderivative(t) -  \derivative(t) \right| \ge  n^\gamma \right \} \cap \cB_t \right) \le \exp \left( - n^{2 \gamma - (1 + \beta) + o(1)} \right).
\]
\end{lemma}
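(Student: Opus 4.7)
My plan is to control the two one-sided increments composing $\hatderivative(t)$ separately. With $\delta = n^{-\beta}$, the triangle inequality gives
\[
|\hatderivative(t) - \derivative(t)| \le n^\beta \bigl|I[t, t+\delta] - \delta\lambda\cut(\cascade(t))\bigr| + n^\beta \bigl|I[t-\delta, t] - \delta\lambda\cut(\cascade(t^-))\bigr|,
\]
so it suffices to show that each right-hand term is at most $n^\gamma/2$ with the claimed probability on the corresponding half of $\cB_t = \cB_t^- \cap \cB_t^+$ (the natural splitting of the event \eqref{eq:Bt} into the exclusion intervals before and after $t$). I focus on the forward piece conditionally on $\cF_t$; the backward one is handled symmetrically by conditioning on $\cF_{t-\delta}$, with $\cB_t^-$ playing the role of $\cB_t^+$.

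Given $\cF_t$, I view $N(s) := I[t, t+s]$ as a counting process with predictable intensity $\lambda\cut(\cascade(t+s))$, compensator $A(s) := \int_0^s \lambda\cut(\cascade(t+u))\,du$, and Doob martingale $M(s) := N(s) - A(s)$. The decomposition
\[
I[t, t+\delta] - \delta\lambda\cut(\cascade(t)) = M(\delta) + \bigl(A(\delta) - \delta\lambda\cut(\cascade(t))\bigr)
\]
reduces the task to bounding a bias term (on $\cB_t^+$) and a fluctuation term, each by $n^{\gamma-\beta}/4$.

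For the bias, on $\cB_t^+$ only low-degree vertices are infected in $(t, t+\delta)$, and each such infection changes the cut by at most $d = n^{o(1)}$. Hence $|\cut(\cascade(t+u)) - \cut(\cascade(t))| \le d N(\delta)$ uniformly for $u \in [0,\delta]$, which integrates to $|A(\delta) - \delta\lambda\cut(\cascade(t))| \le \lambda d \delta N(\delta)$. By Lemma \ref{lemma:infections_poisson_bound}, $N(\delta)$ is stochastically dominated by $\mathrm{Poi}(\lambda m n d \delta) = \mathrm{Poi}(n^{1-\beta+o(1)})$, so $N(\delta) \le n^{1-\beta+o(1)}$ outside an event of probability $e^{-n^{\Omega(1)}}$; the bias is then at most $n^{1-2\beta+o(1)}$, which is within budget since $\gamma > 1-\beta$.

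For the fluctuation, I apply Freedman's inequality to $M$, whose jumps have size $1$ and whose predictable quadratic variation satisfies $\langle M\rangle(\delta) = A(\delta) \le \lambda\delta|E(G)| \le n^{1-\beta+o(1)}$ (using the crude bound $|E(G)| \le mnd = n^{1+o(1)}$). With $x = n^{\gamma-\beta}/4$, Freedman yields
\[
\p(|M(\delta)| \ge x \mid \cF_t) \le 2\exp\!\Bigl(-\frac{x^2}{2\langle M\rangle(\delta) + 2x/3}\Bigr),
\]
and since $\gamma < 1$ forces $x \ll \langle M\rangle(\delta)$, the exponent is of order $x^2/\langle M\rangle(\delta) = n^{2\gamma - 2\beta - (1-\beta) + o(1)} = n^{2\gamma - (1+\beta) + o(1)}$, matching the target rate; a union bound over the rare event $\{N(\delta) > n^{1-\beta+o(1)}\}$ closes the forward estimate. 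The main conceptual point driving the choice of $\cB_t$, and the step I expect to require the most care in a full write-up, is that a single high-degree infection inside $(t, t+\delta)$ would shift the cut by up to $n^\alpha \gg n^{\gamma-\beta}$, instantly destroying the bias estimate; excluding this scenario via $\cB_t^+$ is precisely what makes the argument go through, and an analogous role is played by $\cB_t^-$ for the backward increment.
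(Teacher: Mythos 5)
Your proposal is correct and follows essentially the same route as the paper: you split $\hatderivative(t)-\derivative(t)$ into the two one-sided increments, and for each you decompose $I[t,t+\delta]-\delta\lambda\cut(\cascade(t))$ into the Doob martingale $M(\delta)$ (controlled by Freedman's inequality, exactly as in Lemma~\ref{lemma:cut_approximation}) and the compensator bias $A(\delta)-\delta\lambda\cut(\cascade(t))$ (controlled via the event $\cB_t$, the bound $d$ on cut changes, and Poisson domination of the infection count from Lemma~\ref{lemma:infections_poisson_bound}, exactly as in Lemma~\ref{lemma:cut_changes}). The only cosmetic difference is that the paper phrases the split as two standalone lemmas with integrals indexed by absolute time, whereas you use counting-process/compensator language; the underlying estimates and the role of $\cB_t$ are identical.
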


Since the proof of the lemma is somewhat involved, we defer the details to Section \ref{sec:second_derivative_proof}.

Our next goal is to show that $\hatderivative(t)$ and $\derivative(t)$ are close for \emph{all} $t \ge 0$ whenever $\cB_t$ holds, rather than for just a fixed $t \ge 0$. To prove this, we will employ a discretization argument. Define the finite set of time indices
\[
\mathbb{T}_n : = \left \{ \frac{k}{n^5} : k \in \mathbb{Z}_{\ge 0} \right \} \cap [0,n^2].
\]
The following result shows that the discretization $\mathbb{T}_n$ is rich enough to capture the continuous-time dynamics of the cascade.

\begin{lemma}
\label{lemma:discretization}
With probability $1 - o(1)$, for each $v \in V$ there exists a unique $t \in \mathbb{T}_n$ such that $t - 1/n^5 \le T(v) \le t$.
\end{lemma}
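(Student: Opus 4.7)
The plan is to verify two separate per-vertex properties: \emph{existence}, i.e., $T(v) \in [0, n^2]$ so that some $t \in \mathbb{T}_n$ satisfies $t - 1/n^5 \le T(v) \le t$; and \emph{uniqueness}, i.e., $T(v)$ does not coincide with an interior grid point $k/n^5$, which would otherwise be shared by two adjacent candidate intervals. These are the only two ways the claim could fail, so bounding the probability of each gives the result by a union bound.

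For uniqueness I would invoke Proposition \ref{prop:fpp} to realize each infection time $T(v)$ (for $v \neq v_0$) as the minimum, over a finite collection of paths from $v_0$ to $v$, of sums of i.i.d.~$\mathrm{Exp}(\lambda)$ edge weights. Any such minimum has an absolutely continuous distribution, so $\p(T(v) = k/n^5) = 0$ for every fixed $k$, and a union bound over the countably many grid points and the $n - 1$ non-source vertices shows that no $T(v)$ lands on a grid point almost surely. The source $v_0$ satisfies $T(v_0) = 0$, for which we associate the unique point $t = 1/n^5$ by convention, resolving the boundary ambiguity there.

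For existence, let $\tau_1 = 0 < \tau_2 < \cdots < \tau_n$ be the successive infection times and set $\Delta_j := \tau_j - \tau_{j-1}$ for $j \ge 2$. At time $\tau_{j-1}$, fewer than $n$ vertices are infected, so connectivity of $G$ guarantees at least one active edge between $\cascade(\tau_{j-1})$ and its complement; by the memoryless property (or by Proposition \ref{prop:fpp} applied to paths of length one) the interarrival $\Delta_j$, conditionally on $\cF_{\tau_{j-1}}$, is $\mathrm{Exp}(\lambda N_{j-1})$ with $N_{j-1} \ge 1$, hence conditionally stochastically dominated by $\mathrm{Exp}(\lambda)$. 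Coupling successively in $j$, the total time $\tau_n$ is stochastically dominated by a sum of $n - 1$ i.i.d.~$\mathrm{Exp}(\lambda)$ random variables, whose mean is $(n-1)/\lambda$. A Markov bound then yields $\p(\tau_n > n^2) = O(1/n) = o(1)$, so with probability $1 - o(1)$ every $T(v)$ lies in $[0, n^2]$ and thus inside some interval of the discretization.

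Combining the existence and uniqueness events by a union bound gives the claim. The only mildly delicate point is maintaining the stochastic domination $\Delta_j \preccurlyeq \mathrm{Exp}(\lambda)$ \emph{conditionally} on the filtration so that the i.i.d.~exponential coupling of the total time is legitimate; everything else is standard bookkeeping, and the numerical choice of the horizon $n^2$ (rather than, say, $n \log n$) is generous enough that Markov's inequality suffices without sharper concentration.
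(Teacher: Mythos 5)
Your proof correctly establishes the lemma as literally worded, and your existence argument (stochastic domination of the interarrival times $\Delta_j$ by i.i.d.\ $\mathrm{Exp}(\lambda)$, then Markov) is essentially the paper's argument in a mildly cleaner form --- the paper instead dominates $\max_v T(v)$ by $\sum_{e\in E} F(e)$ and uses $|E| \le n^{1+o(1)}$. The trouble is in how you read ``unique.'' You treat uniqueness as the measure-zero event that some $T(v)$ lands exactly on a grid point $k/n^5$, and you dispose of it via absolute continuity. That is fine as far as it goes, but it is not the content of the lemma the paper actually relies on.

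What the paper's proof establishes --- and what the proof of Lemma~\ref{lemma:degree_concentration} then invokes explicitly --- is the \emph{spacing} property that each interval $[t, t+1/n^5]$ with $t\in\mathbb{T}_n$ contains \emph{at most one} infection event. This is how the authors justify the three-case analysis showing $\derivative(t) \in \{\derivative(t'), \derivative(t'+n^{-5})\}$, and it is also used to bound $I[t'+n^{-\beta}, t+n^{-\beta}]$ by 2. Two distinct vertices could each have a perfectly unique associated $t$ while still sharing the same discretization interval, so injectivity of the $v\mapsto t$ map does not follow from your argument. The spacing property requires its own probabilistic input: the paper conditions on $\cF_t$, observes that two infections in $[t, t+1/n^5]$ would force two distinct edges $e_1,e_2$ with $F(e_1), F(e_2) \le n^{-5}$, bounds this by $\binom{|E|}{2}(1-e^{-\lambda/n^5})^2 = n^{-8+o(1)}$, and union bounds over the $O(n^7)$ grid points. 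This step is absent from your proposal, so while you have proved the literal statement, you have not proved what the lemma is used for, and a reader following your proof would find the downstream argument unjustified.
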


\begin{proof}
We first show that $\max_{v \in V} T(v) \le n^2$. From the representation in Proposition \ref{prop:fpp}, we can upper bound $\max_{v \in V} T(v)$ by $\sum_{e \in E} F(e)$, so we focus on characterizing the latter quantity. In particular, we have that
\[
\E \left[ \sum_{e \in E} F(e) \right] = \frac{|E|}{\lambda}.
\]
We can upper bound the expectation by noting that $|E| \le dn + mn = n^{1 + o(1)}$, which holds since there are at most $n$ vertices of degree  at most $d$ and at most $m$ vertices of degree at most $n$ for $G \in \cG(n,m,d,D)$. Markov's inequality now implies
\[
\p \left( \max_{v \in V} T(v) \ge n^2 \right) \le n^{-1 + o(1)}.
\]
Hence there must exist some interval of the form $[t, t + 1/n^5]$ which contains $T(v)$, for every $v \in V$.

We next show that in each interval of the form $[t, t + 1/n^5]$, there exists at most one infection event. Condition on $\cF_t$. If there are at least 2 infection events in $[t, t + 1/n^5]$, then there must exist distinct $e_1, e_2 \in E$ such that $F(e_1), F(e_2) \le n^{-5}$ (if not, at most one vertex is infected in the interval). The probability of this occurring is at most 
\[
{|E| \choose 2} \left( 1 - e^{ - \lambda / n^5} \right)^2 \le n^{2 + o(1)} \left(\frac{\lambda}{n^5} \right)^2 = n^{-8 + o(1)}.
\]
We conclude by taking a union bound over the $O(n^7)$ elements of $\mathbb{T}_n$.
\end{proof}

Together, Lemmas \ref{lemma:second_derivative_approximation} and \ref{lemma:discretization} prove Lemma \ref{lemma:degree_concentration}.

\begin{proof}[Proof of Lemma \ref{lemma:degree_concentration}]
We start by relating maximal deviations between $\widehat{\deg}_\delta(v)$ and $\lambda \deg(v)$ to maximal deviations between $\hatderivative(t)$ and $\derivative(t)$. 
Recall that $\hatderivative(T(v)) = \widehat{\deg}_\delta(v)$ for $\delta = n^{-\beta}$ and that $\derivative(T(v)) = \lambda ( \deg(v) - 2 | \cN(v) \cap \cascade(T(v)) |)$. By the triangle inequality, we have that
\begin{align*}
 \left| \widehat{\deg}_\delta(v) - \lambda \deg(v) \right|  & \le  \left| \hatderivative(T(v)) - \derivative(T(v)) \right| + 2 \lambda \left| \cN(v) \cap \cascade(T(v)) \right |.
\end{align*}
Recall the event $\cA$ from Definition \ref{def:A} as well as the event $\cB_t$ from \eqref{eq:Bt}. On $\cA$, it holds that
\begin{align}
\max_{v \in V} \left| \widehat{\deg}_\delta(v) - \lambda \deg(v) \right | \mathbf{1}( \cB_{T(v)} ) & \le \max_{v \in V} \left| \hatderivative(T(v)) - \derivative(T(v)) \right| \mathbf{1}( \cB_{T(v)} ) + 2 \lambda \max_{v \in V} \left | \cN(v) \cap \cascade(T(v)) \right| \nonumber \\
\label{eq:degree_derivative_deviation}
& \le \sup_{t \ge 0} \left| \hatderivative(t) - \derivative(t) \right| \mathbf{1}( \cB_t ) + 4 \lambda d \sqrt{n} \log^2 n.
\end{align}
In light of \eqref{eq:degree_derivative_deviation}, we have for any $\gamma$ satisfying the conditions of the lemma that
\begin{align}
\p \left( \max_{v \in V} \left| \widehat{\deg}_\delta(v) - \lambda \deg(v) \right| \mathbf{1}( \cB_{T(v)} ) \ge n^\gamma \right) & \le \p ( \cA^c ) + \p \left( \sup_{t \ge 0} \left| \hatderivative(t) - \derivative(t) \right| \mathbf{1}( \cB_t ) \ge n^\gamma - 4 \lambda d \sqrt{n} \log^2 n \right) \nonumber \\
\label{eq:degree_deviation_bound}
& \le o(1) + \p \left( \sup_{t \ge 0} \left| \hatderivative(t) - \derivative(t) \right| \mathbf{1}( \cB_t ) \ge n^{\gamma'} \right).
\end{align}
In the second line, the $o(1)$ bound is due to Lemma \ref{lemma:A}, and we have used the fact that $\gamma > 1/2$ to argue that $n^\gamma - 4\lambda d \sqrt{n} \log^2 n \ge n^{\gamma'}$ for some $\gamma'<\gamma$ satisfying the same conditions as $\gamma$ in the lemma statement. In the remainder of the proof, we show that the probability on the right hand side in \eqref{eq:degree_deviation_bound} is also $o(1)$. Once this is shown, the proof is concluded, since the event 
\[
\left \{ \max_{v \in V} \left| \widehat{\deg}_\delta(v) - \lambda \deg(v) \right| \mathbf{1}( \cB_{T(v)} ) \le n^\gamma \right \}
\]
is equivalent to the one described in the statement of Lemma \ref{lemma:degree_concentration}.

We therefore proceed by upper bounding the probability on the right hand side of \eqref{eq:degree_deviation_bound}.
Assume that the event described in the statement of Lemma \ref{lemma:discretization} holds. 
Fix $t \ge 0$, and let $t' \in \mathbb{T}_n$ satisfy $t' \le t < t' + n^{-5}$. We first claim that $\derivative(t) = \derivative(t')$ or $\derivative(t) = \derivative(t' + n^{-5})$. To see why, we consider three cases. In the first case, suppose that no vertex is infected in the interval $[t', t' + n^{-5})$. Then there is no change in $\cascade(t)$ across the entire interval, hence $\derivative(t) = \derivative(t') = \derivative(t' + n^{-5})$. In the second case, suppose that there is a single infection event in the interval, which occurs in the sub-interval $[t', t)$. Then there is no change in $\cascade(t)$ in the interval $[t, t' + n^{-5}]$, hence $\derivative(t) = \derivative(t' + n^{-5})$. Finally, in the third case, suppose again that there is a single infection event in the interval, which occurs in the sub-interval $[t, t' + n^{-5})$. By analogous arguments, it follows that $\derivative(t) = \derivative(t')$. There are no other cases since, on the event described in the statement of Lemma \ref{lemma:discretization}, at most one infection event can occur in $[t', t' + n^{-5})$.

Next, we study $\widehat{\derivative}(t)$. By the triangle inequality, 
\begin{align*}
\frac{1}{n^\beta} \left| \widehat{\derivative}(t) - \widehat{\derivative}(t') \right| & \le \left| I \left [ t, t + \frac{1}{n^\beta} \right ] - I \left [ t', t' + \frac{1}{n^\beta} \right] \right| + \left| I \left[ t - \frac{1}{n^\beta}, t \right] - I \left[ t' - \frac{1}{n^\beta}, t' \right ] \right| \\
& = \left| I \left[ t' + \frac{1}{n^\beta} , t + \frac{1}{n^\beta} \right] - I [ t', t] \right| + \left| I [ t', t] - I \left[ t' - \frac{1}{n^\beta} , t - \frac{1}{n^\beta} \right] \right| \\
& \le I \left[ t' + \frac{1}{n^\beta} , t + \frac{1}{n^\beta} \right] + I \left[ t' - \frac{1}{n^\beta} , t - \frac{1}{n^\beta} \right] + 2 I [ t', t ].
\end{align*}
Notice that the interval $[t' + n^{-\beta}, t + n^{-\beta}]$ intersects at most 2 intervals of the form $[a,b]$ for $a,b \in \mathbb{T}_n$, so we can bound $I [ t' + n^{-\beta}, t + n^{-\beta} ]$ by 2. The same argument holds for the intervals $[t'- n^{-\beta}, t - n^{-\beta} ]$ and $[t', t]$, leading to the bound $| \widehat{\derivative}(t) - \widehat{\derivative}(t') | \le 8n^\beta$. Identical arguments show that $| \widehat{\derivative}(t) - \widehat{\derivative}(t' + n^{-5}) | \le 8n^\beta$ as well.

Combining our results on $\derivative(t)$ and $\widehat{\derivative}(t)$, we see that 
\begin{equation}
\label{eq:derivative_difference_discretization}
\left| \widehat{\derivative}(t) - \derivative(t) \right| \le \max \left \{ \left| \widehat{\derivative}(t') - \derivative(t') \right| , \left | \widehat{\derivative}\left( t' + \frac{1}{n^5} \right) - \derivative \left( t' + \frac{1}{n^5} \right) \right| \right \} + 8n^\beta.
\end{equation}
We can therefore bound the probability of interest as follows. 
\begin{align*}
\p \left( \sup_{t \ge 0} \left| \widehat{\derivative}(t) - \derivative(t) \right| \mathbf{1}( \cB_t ) \ge n^\gamma \right) & \stackrel{(a)}{\le} \p \left( \max_{t \in \mathbb{T}_n} \left| \widehat{\derivative}(t) - \derivative(t) \right|\mathbf{1}( \cB_t ) \ge n^\gamma - 8n^\beta \right) \\
& \le \sum_{t \in \mathbb{T}_n} \p \left( \left \{ \left| \widehat{\derivative}(t) - \derivative(t) \right| \ge n^\gamma - 8n^\beta \right \} \cap \cB_t \right) \\
& \stackrel{(b)}{\le} n^7 \exp \left( - n^{\gamma + o(1)} \right) \\
& = o(1).
\end{align*}
Above, $(a)$ is a direct consequence of \eqref{eq:derivative_difference_discretization}. Inequality $(b)$ uses that $\gamma > \beta$, that there are $n^7$ elements of $\mathbb{T}_n$, and Lemma \ref{lemma:second_derivative_approximation}.
\end{proof}

\section{Concentration of the second derivative: Proof of Lemma \ref{lemma:second_derivative_approximation}}
\label{sec:second_derivative_proof}

The proof of Lemma \ref{lemma:second_derivative_approximation} relies on the following two results. 

\begin{lemma}
\label{lemma:cut_approximation}
Let $t \ge 0$, $\beta > 0$ and assume that $\gamma \in ((1 + \beta) / 2, 1)$. Then
\[
\p \left(  \left| I\left[ t, t + \frac{1}{n^\beta} \right] - \int_t^{t + n^{-\beta}} \hspace{-0.5cm} \lambda \cut ( \cascade(s)) ds \right| \ge n^{\gamma - \beta} \right) \le \exp \left( - n^{2 \gamma - (1 + \beta) + o(1)} \right).
\]
\end{lemma}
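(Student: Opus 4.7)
The plan is to express the difference $I[t, t+\delta] - \int_t^{t+\delta} \lambda \cut(\cascade(s))\, ds$ (with $\delta := n^{-\beta}$) as a martingale increment and then invoke a Bernstein-type inequality for martingales with bounded jumps. By \eqref{eq:J_definition}, the $\cF_s$-intensity of the counting process $I(s)$ is $\lambda \cut(\cascade(s))$, so by the Doob--Meyer decomposition for counting processes,
\[
M_s := I[t, s] - \int_t^s \lambda \cut(\cascade(u))\, du, \qquad s \ge t,
\]
is a mean-zero martingale with respect to $\{ \cF_s \}_{s \ge t}$. Because the continuous-time SI dynamics ensure that at most one infection event occurs at any given time, the jumps of $M$ all have size exactly $1$.

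Next I would control the predictable quadratic variation $\langle M \rangle_{t+\delta}$. Since the jumps of $M$ are unit-sized, $\langle M \rangle_{t+\delta}$ equals the compensator $\int_t^{t+\delta} \lambda \cut(\cascade(u))\, du$, which I bound deterministically by $\lambda \delta |E|$. For $G \in \cG(n,m,d,D)$ satisfying Assumption \ref{as:G_conditions}, the total number of edges is at most $nd + mn = n^{1 + o(1)}$ (since $m$ is constant and $d = n^{o(1)}$), giving
\[
\langle M \rangle_{t+\delta} \;\le\; \lambda n^{-\beta} \cdot n^{1 + o(1)} \;=\; n^{1 - \beta + o(1)}.
\]

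The final step is to apply Freedman's inequality: for any $x > 0$,
\[
\p\bigl( |M_{t+\delta}| \ge x \bigr) \;\le\; 2\exp\!\left( - \frac{x^2}{2(\langle M \rangle_{t+\delta} + x/3)} \right).
\]
Plugging in $x = n^{\gamma - \beta}$, the denominator is at most $2 n^{1 - \beta + o(1)} + \tfrac{2}{3} n^{\gamma - \beta}$. Since $\gamma < 1$ implies $\gamma - \beta < 1 - \beta$, the first term dominates and the bound collapses to $\exp\bigl(-n^{2 \gamma - (1 + \beta) + o(1)}\bigr)$, as required. The hypothesis $\gamma > (1 + \beta)/2$ is precisely what guarantees that the resulting exponent is positive, making the tail bound meaningful.

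I do not anticipate a serious obstacle beyond recognizing the martingale structure and applying the right concentration inequality. The main subtlety is selecting a bound on $\cut(\cascade(s))$ that is both loose enough to hold deterministically (the crude $|E|$-bound suffices) and tight enough to yield the target exponent. In regimes where $\gamma$ is only marginally larger than $(1 + \beta)/2$, one could sharpen the analysis by bounding $\cut(\cascade(s))$ on a high-probability event via a stopping-time argument, but the stated lemma does not require this refinement.
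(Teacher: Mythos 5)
Your proposal is correct and follows essentially the same route as the paper: define the compensated counting-process martingale $M_s = I[t,s] - \int_t^s \lambda\cut(\cascade(u))\,du$, bound its predictable quadratic variation by $n^{1-\beta+o(1)}$, and apply Freedman's inequality with $x = n^{\gamma-\beta}$. The only cosmetic difference is in how you bound the quadratic variation — you invoke the standard fact that for a compensated counting process with unit jumps and continuous compensator, $\langle M\rangle$ equals the compensator itself, then bound the compensator by $\lambda\delta|E|$; the paper instead computes $\E[(M(y+\xi)-M(y))^2\mid\cF_y]$ directly via the Poisson-coupling bound of Lemma~\ref{lemma:infections_poisson_bound} and the elementary inequality $(a+b)^2 \le 2a^2 + 2b^2$ — but both yield the same $n^{1-\beta+o(1)}$ bound and the rest of the argument (stopping at $t+n^{-\beta}$, Freedman, two-sided union bound) is identical.
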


\begin{lemma}
\label{lemma:cut_changes}
Let $\gamma, \beta > 0$ satisfy $\gamma > 1 - \beta$. Recall the event 
\[
\cB_t : = \left \{ \forall v \in \highdeg(G), T(v) \notin \left( t - \frac{1}{n^\beta}, t \right) \cup \left( t , t + \frac{1}{n^\beta} \right) \right \}.
\]
Then the following hold:
\begin{align}
\label{eq:cut_diff_result_1}
\p \left( \left \{ \left| \int_t^{t + n^{-\beta}} \hspace{-0.5cm} \cut ( \cascade(s)) ds - \frac{ \cut ( \cascade(t))}{n^\beta} \right| \ge n^{\gamma - \beta}  \right \} \cap \cB_t \right) & \le \exp \left( - n^{\gamma + o(1)} \right), \\
\label{eq:cut_diff_result_2}
\p \left( \left \{ \left| \int_{t - n^{-\beta}}^{t} \hspace{-0.5cm} \cut ( \cascade(s)) ds - \frac{ \cut ( \cascade(t^-)) }{n^\beta} \right| \ge n^{\gamma - \beta}  \right \} \cap \cB_t \right) & \le \exp \left( - n^{\gamma + o(1)} \right).
\end{align}
\end{lemma}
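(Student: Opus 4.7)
The plan is to express the integrand $\cut(\cascade(s))$ for $s$ in a short window around $t$ as the anchor value $\cut(\cascade(t))$ (or $\cut(\cascade(t^-))$) plus a fluctuation driven entirely by the infections occurring in that window. On the event $\cB_t$ every such infection is of a low-degree vertex, so each one shifts $\cut$ by at most $d = n^{o(1)}$ in absolute value. This reduces both claims to an upper tail bound on $I[t, t + n^{-\beta}]$ (respectively $I[t - n^{-\beta}, t]$), which is exactly what Lemma \ref{lemma:infections_poisson_bound} controls.

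First I would carry out the deterministic step. The function $s \mapsto \cut(\cascade(s))$ is piecewise constant and changes only at infection times, and when a vertex $v$ becomes infected the jump equals $\deg(v) - 2|\cN(v) \cap \cascade(T(v))|$, whose absolute value is at most $\deg(v)$. On $\cB_t$, every vertex infected in $(t, t + n^{-\beta})$ has degree at most $d$, so for all $s \in [t, t + n^{-\beta}]$,
\[
\bigl| \cut(\cascade(s)) - \cut(\cascade(t)) \bigr| \le d \cdot I[t, s] \le d \cdot I[t, t + n^{-\beta}].
\]
Integrating over $s$ yields, on $\cB_t$,
\[
\biggl| \int_t^{t + n^{-\beta}} \hspace{-0.5cm} \cut(\cascade(s))\, ds - \frac{\cut(\cascade(t))}{n^\beta} \biggr| \le \frac{d \cdot I[t, t + n^{-\beta}]}{n^\beta},
\]
so the event in \eqref{eq:cut_diff_result_1} is contained in $\{ d \cdot I[t, t + n^{-\beta}] \ge n^\gamma \}$.

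Next I would apply Lemma \ref{lemma:infections_poisson_bound}, which bounds $I[t, t + n^{-\beta}]$ stochastically by a Poisson random variable of mean $\lambda m n d / n^\beta = n^{1 - \beta + o(1)}$ (using that $m$ is constant and $d = n^{o(1)}$). Since $\gamma > 1 - \beta$, the threshold $n^\gamma / d = n^{\gamma - o(1)}$ exceeds this Poisson mean by a positive power of $n$. The standard Chernoff tail $\p(\mathrm{Poi}(\mu) \ge x) \le \exp(-x \log(x/(e\mu)))$ for $x \ge \mu$ then delivers a bound of $\exp(-n^{\gamma + o(1)})$, the saving coming from the fact that $\log(x/\mu) = \Theta(\log n)$ whenever $x/\mu$ is a positive power of $n$. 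This proves \eqref{eq:cut_diff_result_1}.

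The claim \eqref{eq:cut_diff_result_2} follows by the same argument with $\cut(\cascade(t))$ replaced by the left limit $\cut(\cascade(t^-))$ and the forward window replaced by the backward window $(t - n^{-\beta}, t)$. The deterministic bound then reads $|\cut(\cascade(s)) - \cut(\cascade(t^-))| \le d \cdot I[s, t]$ for $s$ in this interval, again because $\cB_t$ forbids high-degree infections there. The only real point to be careful about is the bookkeeping, making sure $\cB_t$ genuinely excludes high-degree contributions from both the forward and backward sub-intervals and that the $n^{o(1)}$ factors absorb cleanly so the Chernoff exponent simplifies to $n^{\gamma + o(1)}$. No probabilistic input beyond Lemma \ref{lemma:infections_poisson_bound} and the standard Poisson tail is needed.
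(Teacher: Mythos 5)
Your proposal is correct and follows essentially the same route as the paper: bound the integral discrepancy by the supremum of $|\cut(\cascade(s)) - \cut(\cascade(t))|$ over the window, note that on $\cB_t$ every jump is by a low-degree vertex so this is at most $d \cdot |\cascade(t+n^{-\beta}) \setminus \cascade(t)|$, and then invoke Lemma \ref{lemma:infections_poisson_bound} for the Poisson domination. The only superficial difference is that the paper finishes with Bernstein's inequality whereas you use the Poisson Chernoff tail directly; both yield the same $\exp(-n^{\gamma + o(1)})$ bound.
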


Before proving the two lemmas, we show how they can be combined to prove the main result.

\begin{proof}[Proof of Lemma \ref{lemma:second_derivative_approximation}]
It is immediate from Lemmas \ref{lemma:cut_approximation} and \ref{lemma:cut_changes} and the triangle inequality that 
\begin{align}
\label{eq:cut_probability_upper}
\p \left( \left \{ \left| I \left[ t, t + \frac{1}{n^\beta} \right] - \frac{ \cut ( \cascade(t))}{n^\beta} \right| \ge 2n^{\gamma - \beta} \right \} \cap \cB_t \right) & \le 2 \exp \left( - n^{2\gamma - (1 + \beta) + o(1)} \right),  \\
\label{eq:cut_probability_lower}
\p \left( \left \{ \left| I \left[ t - \frac{1}{n^\beta}, t  \right] - \frac{ \cut ( \cascade(t^-))}{n^\beta} \right| \ge 2n^{\gamma - \beta} \right \} \cap \cB_t \right) & \le 2 \exp \left( - n^{2\gamma - (1 + \beta) + o(1)} \right).
\end{align}
Above, we have used the fact that $2 \gamma - (1 + \beta) < \gamma$ whenever $\gamma < 1$ to simplify the probability bounds. The bounds \eqref{eq:cut_probability_upper} and \eqref{eq:cut_probability_lower} along with the definitions of $\derivative(t)$ and $\hatderivative(t)$ (see \eqref{eq:expected_derivative} and \eqref{eq:empirical_derivative}) proves that
\[
\p \left ( \left \{ \left| \hatderivative(t) - \derivative(t) \right| \ge 4n^{\gamma} \right \} \cap \cB_t \right) \le 4 \exp \left( - n^{2 \gamma - (1 + \beta) + o(1)} \right),
\]
which is equivalent to the claimed result. 
\end{proof}

The rest of this section is devoted to the proofs of the two lemmas. The proof of Lemma \ref{lemma:cut_approximation} is in Section \ref{sec:cut_approximation_proof}, and the proof of Lemma \ref{lemma:cut_changes} is in Section \ref{sec:cut_changes_proof}.

\subsection{Approximating the cut: Proof of Lemma \ref{lemma:cut_approximation}}
\label{sec:cut_approximation_proof}

The key tool used to prove Lemma \ref{lemma:cut_approximation} is a version of Freedman's inequality due to Shorack and Wellner \cite[Appendix B]{shorack_wellner}.

\begin{lemma}[Freedman's inequality for continuous-time martingales]
\label{lemma:freedman}
Suppose that $\{ X(t) \}_{t \ge 0}$ is a real-valued continuous-time martingale adapted to the filtration $\{ \cF_t \}_{t \ge 0}$. Suppose further that $X_t$ has jumps that are bounded by $C > 0$ in absolute value, almost surely.
Define the quadratic variation of the process 
\[
\langle X \rangle_t : = \lim_{\xi \to 0} \sum_{k = 0}^{\lfloor t / \xi \rfloor} \E \left[ \left. \left( X({(k + 1) \xi}) - X({k \xi}) \right)^2 \right \vert \cF_{k\xi} \right].
\]
Then for any $x \ge 0$ and $\sigma > 0$, it holds that 
\[
\p ( X(t) - X(0) \ge x \text{ and } \langle X \rangle_t \le \sigma^2 \text{ for some $t \ge 0$} ) \le \exp \left( - \frac{x^2 / 2}{\sigma^2 + Cx / 3} \right).
\]
\end{lemma}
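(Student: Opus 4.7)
The plan is to prove this by the standard exponential martingale (Cram\'er--Chernoff) method, combined with optional stopping, and then optimize over a free parameter. The key object is the exponential process
\[
M_\theta(t) := \exp\!\bigl( \theta X(t) - \psi(\theta)\,\langle X\rangle_t \bigr),
\qquad
\psi(\theta) := \frac{e^{\theta C} - 1 - \theta C}{C^2},
\]
defined for $\theta \in (0, 3/C)$. My aim is to show that $M_\theta$ is a supermartingale, and then to convert a tail bound on $X$ into a tail bound on $M_\theta$ via Markov's inequality.

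First I would establish the bound in the discrete-time setting that the statement itself works with. For a mesh $\xi > 0$, set $X_k := X(k\xi)$ and $D_k := X_{k+1} - X_k$, so that $\E[D_k \mid \cF_{k\xi}] = 0$. Using the elementary inequality $e^y \le 1 + y + (e^a - 1 - a)(y/a)^2$ valid for $|y| \le a$, applied with $a = \theta C$ to the (almost surely bounded) increments of the jump part of $X$, I get the conditional MGF bound
\[
\E\!\left[ e^{\theta D_k} \,\middle|\, \cF_{k\xi} \right] \le 1 + \psi(\theta)\, \E\!\left[ D_k^2 \,\middle|\, \cF_{k\xi} \right] \le \exp\!\bigl( \psi(\theta)\, \E[D_k^2 \mid \cF_{k\xi}] \bigr).
\]
Telescoping over $k$ and taking $\xi \to 0$ using the definition of $\langle X\rangle_t$ given in the lemma statement, this yields $\E[M_\theta(t)] \le 1$ and, more importantly, the supermartingale property $\E[M_\theta(t) \mid \cF_s] \le M_\theta(s)$ for $s \le t$. (The one subtle point here is handling fixed-mesh increments that need not be bounded by $C$ even though the jumps of $X$ are; I would address this by approximating $X$ by its jump decomposition and using the continuity properties of $\langle X\rangle$, or equivalently by invoking It\^o's formula on the semimartingale $\theta X - \psi(\theta)\langle X\rangle$.)

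Next I would apply optional stopping. Define the stopping time
\[
\tau := \inf\bigl\{ t \ge 0 : X(t) - X(0) \ge x \text{ and } \langle X \rangle_t \le \sigma^2 \bigr\}.
\]
On $\{\tau < \infty\}$ the process satisfies $M_\theta(\tau) \ge \exp(\theta x - \psi(\theta)\sigma^2)$. Applying optional stopping to $M_\theta$ at $\tau \wedge t$, then letting $t \to \infty$ via Fatou's lemma, gives $\E[M_\theta(\tau) \mathbf{1}\{\tau < \infty\}] \le \E[M_\theta(0)] = 1$. Markov's inequality then yields
\[
\p(\tau < \infty) \le \exp\!\bigl( -\theta x + \psi(\theta)\sigma^2 \bigr).
\]

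Finally I would optimize over $\theta$. Using the Taylor-based estimate $\psi(\theta) \le \dfrac{\theta^2/2}{1 - \theta C/3}$ valid for $\theta \in (0, 3/C)$, and choosing $\theta = x/(\sigma^2 + Cx/3)$, the exponent becomes $-x^2/\bigl(2(\sigma^2 + Cx/3)\bigr)$, which is exactly the claimed bound. The step I expect to require the most care is the rigorous justification of the supermartingale property of $M_\theta$ in continuous time under only the hypotheses given (c\`adl\`ag martingale with jumps bounded by $C$ and the explicit discrete limit definition of $\langle X\rangle$); everything afterwards is a mechanical Chernoff optimization. Since the result is classical and Shorack--Wellner already provide a self-contained treatment, a clean write-up would cite their argument for this step rather than reproduce the semimartingale machinery.
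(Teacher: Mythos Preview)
The paper does not prove this lemma: it is quoted as a black-box tool from Shorack and Wellner, and the \texttt{proof} environment that immediately follows it in the paper is actually the proof of Lemma~\ref{lemma:cut_approximation}, which \emph{applies} Freedman's inequality to the compensated infection-count martingale. So there is nothing in the paper to compare your argument against.

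That said, your sketch is the standard route (and essentially the one Shorack--Wellner and Freedman himself take): build the exponential supermartingale $\exp(\theta X - \psi(\theta)\langle X\rangle)$ via the pointwise inequality $e^y \le 1 + y + \psi(\theta)C^2 (y/C)^2$, apply optional stopping at the first time the event of interest occurs, and optimize in $\theta$ using the Bernstein-type bound $\psi(\theta) \le \tfrac{\theta^2/2}{1-\theta C/3}$. Your choice $\theta = x/(\sigma^2 + Cx/3)$ is the right one and the arithmetic checks out. You correctly flag the only delicate point, namely that mesh-$\xi$ increments need not be bounded by $C$ even when the jumps are; in a full write-up this is handled either by working directly with the jump measure and It\^o's formula for semimartingales, or by the localization/approximation argument in Shorack--Wellner. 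Citing their Appendix~B for that step, as you suggest, is exactly what the paper itself does.
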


\begin{proof}
For $y \ge t$, define the process
\[
M(y) : = I(y) - I(t) - \int_t^y \lambda \cut( \cascade(s)) ds.
\]
Notice that $M(y)$ is a martingale. This is evident from the formula for the condition expectation of the derivative of $I$, given in \eqref{eq:J_definition}. To apply the concentration inequality of Lemma \ref{lemma:freedman}, we derive a few basic properties of $M(y)$. The process jumps by 1 precisely when a new vertex is infected, so we may set $C = 1$. To bound the quadratic variation, we have, for $\xi > 0$ sufficiently small, 
\begin{equation}
\label{eq:quadratic_variation_bound}
\E \left[ \left. ( M(y + \xi) - M(y) )^2 \right \vert \cF_y  \right] \le 2 \E \left[ \left. (I(y + \xi) - I(y) )^2  \right \vert \cF_y   \right] + 2 \E \left[ \left. \left( \int_y^{y + \xi} \lambda \cut( \cascade(s)) ds \right)^2 \right \vert \cF_y \right],
\end{equation}
where we have used the inequality $(a + b)^2 \le 2a^2 + 2b^2$. To bound the first term on the right hand side of \eqref{eq:quadratic_variation_bound}, we recall from Lemma \ref{lemma:infections_poisson_bound} that $I(y + \xi) - I(y)$ is stochastically bounded by a $\mathrm{Poi}( \lambda mnd \xi)$ random variable conditioned on $\cF_y$. It follows that
\begin{equation}
\label{eq:quadratic_variation_term_1}
\E \left[ \left. ( I(y + \xi) - I(y))^2 \right \vert \cF_y \right ] \le \lambda mnd \xi + (  \lambda mnd \xi)^2 =  \lambda mnd \xi + o ( \xi).
\end{equation}
To bound the second term on the right hand side of \eqref{eq:quadratic_variation_bound}, use the (somewhat loose) bound $\cut ( \cascade(s)) \le |E| \le n^2$ to arrive at
\begin{equation}
\label{eq:quadratic_variation_term_2}
\int_y^{y + \xi} \lambda \cut ( \cascade( s)) ds \le \xi \lambda n^2.
\end{equation}
Together, \eqref{eq:quadratic_variation_term_1} and \eqref{eq:quadratic_variation_term_2} show that
\[
\E \left[ \left. ( M(y + \xi) - M(y) )^2 \right \vert \cF_y  \right] \le 2 \lambda mnd \xi + o ( \xi),
\]
hence $\langle M \rangle_y \le 2 \lambda mnd (y - t)$.

Next, define the stopped process $\overline{M}(y) : = M(y \land (t + n^{-\beta}))$. For this process, we have that $M(t + n^{-\beta}) = \overline{M}(t + n^{-\beta})$ and that $\langle \overline{M} \rangle_y \le 2 \lambda md n^{1 - \beta}$ almost surely. Lemma \ref{lemma:freedman} applied to the process $\overline{M}$ shows that
\[
\p \left( {M}(t + n^{-\beta}) - {M}(t) \ge x \vert \cF_t \right) = 
\p \left( \overline{M}(t + n^{-\beta}) - \overline{M}(t) \ge x \vert \cF_t \right) \le \exp \left( - \frac{x^2 / 2}{2 \lambda md n^{1 - \beta} + x/3} \right).
\]
Applying the same analysis to the martingale $-M$ and taking a union bound shows that, for $x \ge 0$,
\[
\p \left( \left. \left| I\left[ t, t + \frac{1}{n^\beta} \right] - \int_t^{t + n^{-\beta}} \hspace{-0.5cm} \lambda \cut ( \cascade(s)) ds \right| \ge x \right \vert \cF_t \right) \le 2 \exp \left( - \frac{x^2 / 2}{2 \lambda md n^{1 - \beta} + x/3} \right).
\]
Substituting $x = n^{\gamma - \beta}$ into the expression above yields a probability bound of 
\[
 \exp \left( - n^{\min \{ \gamma - \beta, 2 \gamma - 1 - \beta \} + o(1)} \right).
\]
The desired claim follows since $2 \gamma - 1 - \beta < \gamma - \beta$ whenever $\gamma < 1$. 
\end{proof}

\subsection{Changes in the cut: Proof of Lemma \ref{lemma:cut_changes}}
\label{sec:cut_changes_proof}

As a shorthand, denote $\delta : = n^{-\beta}$. We can bound
\begin{align}
\label{eq:cut_integral}
\left| \int_t^{t + \delta} \cut ( \cascade(s)) ds - \delta \cut ( \cascade(t)) \right| & \le \int_t^{t + \delta} | \cut ( \cascade(s)) - \cut ( \cascade(t)) | ds  \le \delta \sup_{t < s < t + \delta} | \cut ( \cascade(s)) - \cut (\cascade(t)) |.
\end{align}
Recall the event $\cB_t : = \{ \forall v \in \highdeg(G), T(v) \notin (t - \delta, t) \cup (t, t + \delta) \}$. 
On $\cB_t$, all vertices in $\cascade(t + \delta) \setminus \cascade(t)$ have degree at most $d$. As a result, for all $s \in (t, t + \delta)$, it holds that
\begin{equation}
\label{eq:cut_difference}
\left| \cut ( \cascade(s) ) - \cut( \cascade(t)) \right| \le \sum_{v \in \cascade(s) \setminus \cascade(t)} \deg(v) \le d | \cascade(s) \setminus \cascade(t) | \le d | \cascade(t + \delta) \setminus \cascade(t) |.
\end{equation}
The first inequality in \eqref{eq:cut_difference} follows since the size of the cut changes by at most $\deg(v)$ vertices when $v$ is added to the cascade. The second inequality holds since $\cascade(t) \subseteq \cascade(s) \subseteq \cascade(t + \delta)$.

We proceed by studying $| \cascade(t + \delta) \setminus \cascade(t) |$. Let $X \sim \mathrm{Poi}(\lambda mn d \delta )$. By Lemma \ref{lemma:infections_poisson_bound}, it holds that
\begin{align}
\p \left( | \cascade(t + \delta) \setminus \cascade(t) | \ge \frac{n^\gamma}{d} \right) & \stackrel{(a)}{\le} \p \left ( X \ge \frac{n^\gamma}{2d} + \E [ X] \right) \nonumber \\
& \stackrel{(b)}{\le} \mathrm{exp} \left( - \frac{n^{2\gamma}/(8d^2)}{ n^{1 - \beta + o(1)} + n^\gamma / (6d)} \right) \nonumber \\
\label{eq:cut_diff_bernstein}
& \stackrel{(c)}{\le} \mathrm{exp} \left( - n^{\gamma + o(1)} \right).
\end{align}
Above, $(a)$ holds since $\E [ X ] \le \lambda m n^{1 - \beta + o(1)} \le n^\gamma /(2d)$ provided $\gamma > 1 - \beta$. The inequality $(b)$ follows from Bernstein's inequality and the fact that $\lambda mnd = n^{1 + o(1)}$. The final inequality $(c)$ holds by our assumption that $\gamma > 1 - \beta$. Together, \eqref{eq:cut_integral}, \eqref{eq:cut_difference} and \eqref{eq:cut_diff_bernstein} imply the result \eqref{eq:cut_diff_result_1}.

To prove \eqref{eq:cut_diff_result_1}, we may follow identical reasoning. Analogously to \eqref{eq:cut_integral} and \eqref{eq:cut_difference}, it holds on the event $\cB_t$ that
\[
\left| \int_{t - \delta}^t \cut ( \cascade(s)) ds - \delta \cut( \cascade(t^-)) \right| \le \delta \sup_{t - \delta < s< t} | \cut (\cascade(s)) - \cut (\cascade(t)) | \le d | \cascade(t) \setminus \cascade(t - \delta) |.
\]
We may then analyze $| \cascade(t) \setminus \cascade(t - \delta) |$ in the same manner as \eqref{eq:cut_diff_bernstein}. Putting everything together yields the second result \eqref{eq:cut_diff_result_2}.%
\hfill \qed

\section{Lower bounds for $\alpha \in (0,1/2)$: Proof of Theorem \ref{thm:impossibility}}
\label{sec:impossibility_proof}

To prove the theorem, we will construct a tailored family of graphs $\cH \subset \cG$ for which we can tractably analyze the distribution of infection times, while also constituting a ``hard'' instance for estimating high-degree vertices. To construct $\cH$, we first describe a particular deterministic graph $H$.

\begin{definition}
\label{def:H}
Fix a positive integer $N$. Let $H'$ be a connected tree with $N$ vertices, maximum degree at most 3, and diameter at most $2 \log N$ (e.g., a balanced binary tree). For each vertex in $u \in H'$, we add a path of length $\log^5 n$ starting at $u$; all added paths are disjoint. We call the resulting graph $H$. The set of endpoints of the paths added to $H'$, which are also the set of leaves of $H$, is denoted by $L$. 
\end{definition}

The addition of paths which are much longer than the diameter of $H'$ is a crucial aspect of our construction. At a high level, it ensures that the infection times of vertices in $L$ are primarily determined by the infection times of vertices in the corresponding path, rather than the infection times of vertices in $H'$. Since all the added paths are disjoint, the infection times of vertices in $L$ become \emph{almost} independent, which simplifies much of our analysis. A bit more formally, we have the following useful observation.

\begin{fact}
\label{fact:conditional_independence_H}
The infection times of vertices in $L$, given by the collection $\{ \mathbf{T}(v) \}_{v \in L}$, are conditionally independent with respect to $\{ \mathbf{T}(z) \}_{z \in H'}$, the collection of infection times of vertices in $H'$.
\end{fact}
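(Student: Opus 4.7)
The plan is to combine the First Passage Percolation (FPP) representation of the SI process from Proposition \ref{prop:fpp} with the key structural feature of $H$: each added tail is a dead-end, edge-disjoint from every other tail and from $H'$ except at its root. This will yield a clean decomposition of $T(v)$ for each leaf $v \in L$ into a contribution coming from $H'$ and an independent contribution coming from traversing the tail, from which the conditional independence will be immediate.

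In more detail, for each $v \in L$, let $u_v \in H'$ be the root of the tail containing $v$, let $P_v$ denote the unique path from $u_v$ to $v$, and set $W_v := \sum_{e \in P_v} F(e)$, where $\{F(e)\}_{e \in E(H)}$ is the collection of i.i.d.\ $\mathrm{Exp}(\lambda)$ FPP weights associated with a single cascade. The first step is to establish the decomposition $T(v) = T(u_v) + W_v$. Because $v$ is a leaf and every internal vertex of a tail has degree $2$ with its other neighbor lying deeper in the tail, any path in $H$ from the source (which, under the construction of $\mu$ in this section, will lie in $H'$) to $v$ must first reach $u_v$ and then traverse all of $P_v$; applying Proposition \ref{prop:fpp} to this observation yields the decomposition. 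The same dead-end reasoning shows that each $T(z)$ for $z \in H'$ is a measurable function of $\{F(e) : e \in E(H')\}$ alone, since any path that ventured into a tail would have to retrace its steps and thus cannot be minimizing.

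The second step is to exploit independence of the exponential weights. Since the $F(e)$'s are mutually independent and the edge sets $E(H')$ and $\{P_v\}_{v \in L}$ are pairwise disjoint, $\{W_v\}_{v \in L}$ is a mutually independent family that is jointly independent of $\{T(z)\}_{z \in H'}$. Conditional on $\{T(z)\}_{z \in H'}$, each $T(v)$ therefore equals a determined constant $T(u_v)$ plus the independent variable $W_v$, with the $W_v$'s remaining mutually independent. This gives conditional independence within a single cascade. Extending to the $K$-cascade statement $\{\mathbf{T}(v)\}_{v \in L}$ versus $\{\mathbf{T}(z)\}_{z \in H'}$ follows immediately from the independence of the $K$ cascades, each of which admits the same decomposition with a fresh set of exponential weights.

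The main (and quite modest) obstacle is carefully writing out the dead-end argument that justifies $T(v) = T(u_v) + W_v$ via Proposition \ref{prop:fpp}; once this is established, the conditional independence reduces to the elementary fact that a family of random variables each given by a conditioning-measurable constant plus an independent summand, where the summands are mutually independent, is conditionally independent.
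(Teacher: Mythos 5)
Your argument is correct and fills in exactly the reasoning the paper relies on: the decomposition $T(v) = T(u_v) + W_v$, with $W_v$ a sum of edge weights over the tail that are disjoint from the edges determining $\{T(z)\}_{z \in H'}$ and from the other tails, is the same one the paper records as \eqref{eq:T_X_distribution}, and the conditional independence then follows from the elementary fact you state. One detail worth making explicit: Fact~\ref{fact:conditional_independence_H} is invoked when the cascade runs on the full random graph $G \in \cH$, so the FPP weights live on $E(G)$, not merely $E(H)$. This is harmless because every vertex in $[N]$ has degree one in $G$, so no minimizing path between two vertices of $H$ can ever enter $[N]$; your dead-end argument therefore extends verbatim from $H$ to $G$, and the infection times of $H$-vertices are unchanged when the cascade is viewed on $G$.
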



To construct the graphs of interest, we will introduce additional vertices to $H$ which connect to $L$ in a particular manner. 

\begin{definition}
Suppose that $N$ vertices, labelled from 1 to $N$, are added to $H$. Each vertex in $[N]$ forms an edge with a single element of $L$. Define $\cH_\emptyset$ to be the set of resulting graphs for which all vertices in $L$ have degree at most $\log^2 n$. For each $v \in L$, define $\cH_v$ to be the set of graphs for which each $u \in L \setminus \{v \}$ has degree at most $\log^2 n$, but $v$ has degree at least $D / 2$. The union of all such graphs is given by $\cH : = \bigcup_{x \in L \cup \{ \emptyset \}} \cH_x$.
\end{definition}

We make a few simple observations about the family $\cH$. 
\begin{itemize}

\item If $G \in \cH_\emptyset$ then $\highdeg(G) = \emptyset$, and if $G \in \cH_v$ for $v \in L$ then $\highdeg(G) = \{v\}$.

\item The total number of vertices in $G \in \cH$ is $n : = 2N + N \log^5 N$. In particular, $N = n^{1 - o(1)}$. 

\end{itemize}

The first step in our analysis is to show that it is challenging to even detect whether the set of observed infection times are due to a graph in $\cH_\emptyset$ or in $\cH_v$ for some $v \in L$. To do so, we construct convenient distributions $\mu_\emptyset$ over elements of $\cH_\emptyset$ and $\mu_v$ over elements of $\cH_v, v \in L$. We let $P_\emptyset$ and $P_v, v \in L$ denote the induced distributions over the infection times. That is, for $x \in L \cup \{ \emptyset \}$, $P_x$ is the distribution corresponding to $\SI(G, v_0)^{\otimes K}$ where $G \sim \mu_x$ and $v_0 \i H'$ is chosen arbitrarily (but is fixed across all cascades). 
We defer the precise description of the distributions $\{ \mu_x \}_{x \in L \cup \{ \emptyset \}}$ to Section \ref{sec:graph_distributions}, and move forward with the proof for now.

\begin{lemma}
\label{lemma:detection_impossibility}
Let $\epsilon > 0$ and $\alpha \in (0,1/2)$. Suppose that $D \le n^\alpha$ for some $\alpha \in (0,1/2)$ and that 
\[
K \le \left( \frac{ 1 - 2 \alpha - \epsilon}{5} \right) \frac{\log n}{\log \log n}.
\]
Then it holds for all $v \in L$ that $\TV( P_\emptyset, P_v) = o(1)$.  
\end{lemma}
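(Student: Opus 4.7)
The plan is to reduce the problem to a conditional sparse mixture detection problem and then apply $\chi^2$ tensorization as in Cai and Wu \cite{cai2014optimal}. The first step is a crucial simplification: because every additional vertex in $[N]$ is a degree-one leaf whose single edge lands in $L$, any walk in $G$ from $\source$ to a vertex $u \in H$ that visits an extra vertex must traverse the same edge twice and so is not weight-minimal. Hence by the FPP representation (Proposition \ref{prop:fpp}), the joint distribution of $\mathbf{T}_H := \{\mathbf{T}(z)\}_{z \in H}$ is the same under $P_\emptyset$ and under each $P_v$. Using the identity $\TV(P,Q) = \E_X[\TV(P(\cdot|X), Q(\cdot|X))]$ for measures sharing an $X$-marginal, it suffices to bound $\E_{\mathbf{T}_L}[\TV(p_\emptyset^{\otimes N}, p_v^{\otimes N})]$, where, conditionally on $\mathbf{T}_L$, each extra vertex independently draws its $K$-vector of infection times from
\[
p_\emptyset(\mathbf{t}) = \frac{1}{|L|}\sum_{u \in L}\prod_{k=1}^K f_\lambda(t_k - T_k(u)), \qquad p_v = (1-\epsilon_n)\, p_\emptyset + \epsilon_n \, q_v,
\]
with $q_v(\mathbf{t}) = \prod_k f_\lambda(t_k - T_k(v))$, $f_\lambda(x) = \lambda e^{-\lambda x}\mathbf{1}(x \ge 0)$ and $\epsilon_n := 2D/n$. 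The degree caps in the definition of $\mu_\emptyset, \mu_v$ hold with probability $1-o(1)$, so I would absorb that conditioning into a vanishing error.

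Next I would apply $\chi^2$ tensorization. Since $p_v$ is an $\epsilon_n$-mixture, $\chi^2(p_v\|p_\emptyset) = \epsilon_n^2 \chi^2(q_v\|p_\emptyset)$, and combining tensorization with Pinsker's inequality gives
\[
\TV(p_\emptyset^{\otimes N}, p_v^{\otimes N})^2 \le \tfrac{1}{2}\bigl(\exp(N\epsilon_n^2 \chi^2(q_v\|p_\emptyset)) - 1\bigr).
\]
Writing $\Delta_k^u := T_k(u) - T_k(v)$, a direct computation on the support of $q_v$ yields
\[
\frac{q_v(\mathbf{t})}{p_\emptyset(\mathbf{t})} = \frac{|L|}{\sum_{u \in L} e^{\lambda \sum_k \Delta_k^u}\,\mathbf{1}(\forall k: t_k - T_k(v) \ge \Delta_k^u)} \le |L|,
\]
so the entire problem boils down to lower-bounding this random denominator with high probability over $\mathbf{T}_L$.

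The technical heart of the argument is to show that, with probability $1-o(1)$ over $\mathbf{T}_L$, $\chi^2(q_v\|p_\emptyset) \le (\log n)^{5K + o(K)}$. The key structural input is Fact \ref{fact:conditional_independence_H}: each $T_k(u)$ decomposes as $T_k(u') + S_k(u)$ where $u' \in H'$ is the base of $u$'s path and $S_k(u) \sim \mathrm{Gamma}(\log^5 n, \lambda)$ is independent across leaves $u$ (the paths are vertex-disjoint) and across cascades $k$. Since $H'$ has diameter $O(\log N)$, the $\Delta_k^u$ are dominated by the independent path-delay differences $S_k(u) - S_k(v)$, which have mean zero and standard deviation $\Theta(\log^{5/2} n)$. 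I would then use a carefully truncated second-moment bound: after removing the (rare) leaves whose paths yield extreme negative $\Delta_k^u$, estimate the contribution of the typical ``crowd'' of $u \in L$ for which $|\Delta_k^u| = O(\log^{5/2 + o(1)} n)$ simultaneously in every cascade. Plugging the resulting $\chi^2$ estimate together with $N = n^{1-o(1)}$ and $\epsilon_n = 2n^{\alpha-1}$ gives
\[
N\epsilon_n^2 \chi^2(q_v\|p_\emptyset) \le n^{2\alpha - 1 + 5K\log\log n/\log n + o(1)},
\]
which is $o(1)$ exactly when $K \le \tfrac{1-2\alpha-\epsilon}{5}\cdot\tfrac{\log n}{\log\log n}$, matching the hypothesis.

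The main obstacle will be the $\chi^2$ bound. Pointwise, $q_v/p_\emptyset$ can be as large as $|L| = n^{1-o(1)}$, and direct MGF estimates diverge because $\mathrm{Gamma}(\log^5 n, \lambda)$ has MGF exploding at $\lambda$, so one cannot hope to take expectations of $e^{\lambda \Delta_k^u}$ termwise. One therefore needs a truncation of the likelihood ratio together with a quantitative anticoncentration argument isolating a polylog-size subset of leaves that are within $\log^{5/2+o(1)} n$ of $v$ in \emph{every} cascade simultaneously. The exponent $5$ in the threshold on $K$ is calibrated precisely by the $\log^5 n$ path length chosen in Definition \ref{def:H}, and this calibration is what makes the lower bound match (up to a $\log\log n$ factor) the $\Theta(\log n)$ sample complexity of exact tree recovery given by Theorem \ref{thm:exact_recovery}.
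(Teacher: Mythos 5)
Your roadmap is the same as the paper's: reduce to the conditional product form of the infection times of $[N]$ given $\{\mathbf{T}(u)\}_{u\in L}$, apply $\chi^2$ tensorization, reduce the $\chi^2$-divergence to a lower bound on the random denominator $\frac{1}{N}\sum_{u\in L}\prod_k e^{\lambda(T_k(u)-T_k(v))}\mathbf{1}(T_k(v)\ge T_k(u))$, use the vertex-disjointness of the $\log^5 n$-length paths to get conditional independence across leaves and a $\mathrm{Gamma}(\log^5 n)$ law for each delay, and close with $N\epsilon_n^2 \cdot (\log n)^{5K}$. The correct identification of the likelihood ratio, the exact tensorization step, and the final exponent arithmetic all match.

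The gap is in the execution of the central estimate. You correctly observe that $\E[e^{\lambda X_i(u)}]$ diverges for $X_i(u)\sim\mathrm{Gamma}(R,\lambda)$, but the paper resolves this without any truncation/anticoncentration argument: the indicator $\mathbf{1}(X_i(u)\le T_i(v)-T_i(u'))$ is already the truncation, and inside that range the factor $e^{\lambda x}$ \emph{exactly cancels} the $e^{-\lambda x}$ in the Gamma density, leaving $\E\bigl[e^{\lambda X_i(u)}\mathbf{1}(X_i(u)\le T)\bigr]=T^R/\Gamma(R+1)$, which is $\approx R^{-1/2}$ per cascade by Stirling when $T\approx R$. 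The paper then computes this conditional first moment exactly and applies Hoeffding to the conditionally independent sum over $u\in L$; there is no need to isolate a polylog-size ``crowd'' of nearby leaves. Moreover, your proposed window $|\Delta_k^u|=O(\log^{5/2+o(1)} n)$ does not work as stated: leaves with $X_k(u)$ that far below $X_k(v)$ contribute $e^{-\Theta(\log^{5/2}n)}$ per cascade, which is super-polynomially negligible. The integral $\int_0^{T}x^{R-1}dx$ is dominated by $x\in[T-O(1),T]$, so the relevant leaves must be within $O(1)$ of $v$ simultaneously in all $K$ coordinates; a second-moment argument on the count of such leaves could be made to work, but it is not what you describe, and it would be strictly harder to carry out than the paper's first-moment-plus-Hoeffding computation. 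You would also need the explicit Stirling and Taylor bounds (Lemma \ref{lemma:exponential_inequality} in the paper) to turn the conditional expectation into the clean $R^{-K}$ lower bound on the denominator, and a high-probability event controlling $T_i(v')$, $T_i(v)$, and $\sum_i(X_i(v)-R)^2$; none of these are addressed in the proposal.
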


At a high level, we prove Lemma \ref{lemma:detection_impossibility} by studying the $\chi^2$-divergence between $P_\emptyset$ and $P_v$ for $v \in L$, which provides an upper bound for the total variation distance and has a useful tensorization property which we leverage in our analysis. 
Before proving the lemma, we show how it can be used to prove the theorem.


\begin{proof}[Proof of Theorem \ref{thm:impossibility}]
Define the measure $\mu$ over the ensemble $\cH$ via $\mu : = \frac{1}{N+1} \sum_{x \in L \cup \{ \emptyset \}} \mu_x$, and similarly define the induced measure over vertex infection times $P : = \frac{1}{N + 1} \sum_{x \in L \cup \{ \emptyset \}} P_x$. Let us also define $\delta : = \max_{x,y \in L \cup \{ \emptyset \}} \TV(P_x, P_y)$ and note that $\delta = o(1)$ by Lemma \ref{lemma:detection_impossibility} and the triangle inequality. For any estimator $\mathrm{HD}$, it holds that
\begin{align*}
P ( \mathrm{HD} = \highdeg(G) ) & = \frac{1}{N + 1} \sum_{x \in L \cup \{ \emptyset \}} P_x ( \mathrm{HD} = \{ x \} ) \\
& \stackrel{(a)}{=} 1 - \frac{1}{N + 1} \sum_{x,y \in L \cup \{ \emptyset \}: x \neq y} P_x ( \mathrm{HD} = \{ y \} ) \\
& \stackrel{(b)}{\le} 1 - \frac{1}{N + 1} \sum_{x,y \in L \cup \{ \emptyset \} : x \neq y} \left( P_y ( \mathrm{HD} = \{ y \} ) - \delta \right) \\
& \stackrel{(c)}{=} 1 + N \delta - N P ( \mathrm{HD} = \highdeg(G) ),
\end{align*}
where $(a)$ is due to the law of total probability, $(b)$ uses the definition of the total variation distance, and $(c)$ follows as each term $P_y ( \mathrm{HD} = \{y \})$ is counted $N$ times in the summation. Rearranging terms shows that
\[
P ( \mathrm{HD} = \highdeg(G) ) \le \frac{1 + N \delta}{1 + N} = o(1).
\]
\end{proof}

The remainder of this section is devoted to the proof of Lemma \ref{lemma:detection_impossibility}.

\subsection{Distributions of graphs in $\cH$}
\label{sec:graph_distributions}

Before proving the lemma, we must describe the distributions $\{ \mu_x \}_{x \in L \cup \{ \emptyset \}}$ over graphs in $\cH$. To this end, it is useful to first define corresponding ``auxiliary'' distributions $\{ \mu_x' \}_{x \in L \cup \{\emptyset \}}$. We say that $G \sim \mu_\emptyset'$ if each vertex in $[N]$ chooses a uniform random element of $L$ with which they form an edge. On the other hand, we say that $G \sim \mu_v'$ if, for each $u \in [N]$, $u$ connects to a uniform random element of $L$ with probability $1 - D / N$, otherwise $u$ connects to $v$ with probability $D / N$. Finally, for $x \in L \cup \{ \emptyset \}$, the measure $\mu_x$ is equal to $\mu_x'$ conditioned on the sampled graph being an element of $\cH_x$. 

For $x \in L \cup \{ \emptyset \}$, we recall that $P_x$ is the distribution corresponding to $\SI(G, v_0)^{\otimes K}$, where $G \sim \mu_x$. Similarly, we define $P_x'$ to be the distribution corresponding to $\SI(G, v_0)^{\otimes K}$, where $G \sim \mu_x'$. The following result shows that the total variation distance between $P_x$ and $P_x'$ is small. 

\begin{lemma}
\label{lemma:tv_auxiliary}
It holds for any $x \in L \cup \{ \emptyset \}$ that $\TV( P_x, P_x') = o(1)$. 
\end{lemma}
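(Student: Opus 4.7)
The plan is to reduce the claim to a standard concentration statement about the unconditioned random graph $G \sim \mu_x'$. Two classical observations suffice for this reduction. First, since the cascade traces under $P_x$ and $P_x'$ are both obtained by pushing $\mu_x$ and $\mu_x'$ through the same Markov kernel $G \mapsto \SI(G, v_0)^{\otimes K}$, a coupling argument (couple the cascades to reuse the same randomness whenever the graph coupling agrees) gives $\TV(P_x, P_x') \le \TV(\mu_x, \mu_x')$. Second, $\mu_x$ is by definition $\mu_x'$ conditioned on the event $\{G \in \cH_x\}$, and a short computation with the law of total probability yields the identity $\TV(\mu_x, \mu_x') = \mu_x'(G \notin \cH_x)$. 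Combining these gives $\TV(P_x, P_x') \le \mu_x'(G \notin \cH_x)$, so it suffices to show $\mu_x'(\cH_x) = 1 - o(1)$ for each $x \in L \cup \{ \emptyset \}$.

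For $x = \emptyset$, each of the $N$ added vertices in $[N]$ is independently attached to a uniformly random element of $L$, and since $|L| = N$ the number of such edges incident to any fixed $u \in L$ is distributed as $\mathrm{Bin}(N, 1/N)$, which has mean $1$. A Chernoff bound together with a union bound over the $N$ leaves shows that the maximum number of edges added to any element of $L$ is $O(\log n / \log \log n) = o(\log^2 n)$ with probability $1 - o(1)$; adding the contribution of $1$ from the path in $H$ still leaves us below $\log^2 n$, so $G \in \cH_\emptyset$ with high probability.

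For $x = v \in L$, I would verify two high-probability events separately. The number of $[N]$-vertices attaching to $v$ stochastically dominates a $\mathrm{Bin}(N, D/N)$ variable, whose mean is $D = n^\alpha$; a Chernoff bound then yields $\deg(v) \ge D/2$ except with probability $\exp(-\Omega(n^\alpha))$. For each $u \in L \setminus \{v\}$, the attachment probability of any given $[N]$-vertex is at most $1/N$, so the ball-in-bin argument from the previous paragraph applies verbatim to bound the maximum such degree by $\log^2 n$ with probability $1 - o(1)$. Intersecting these two events gives $G \in \cH_v$ with probability $1 - o(1)$. I do not expect any genuine technical obstacle; the main things that need care are the conditional-TV identity and the coupling inequality used in the reduction, both of which are standard, together with the bookkeeping that $N = n^{1 - o(1)}$ and hence $D/N = n^{-(1 - \alpha) + o(1)}$ is still small enough to justify the Chernoff estimates used above.
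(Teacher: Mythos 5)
Your proposal is correct and follows essentially the same route as the paper: reduce $\TV(P_x, P_x')$ to $\TV(\mu_x, \mu_x')$ via the data processing inequality (which is what your coupling argument amounts to), identify this with $\mu_x'(\cH_x^c)$ using the conditioning identity, and then control the degrees of vertices in $L$ by binomial concentration plus a union bound. The only cosmetic differences are that you invoke Chernoff bounds where the paper uses Bernstein's inequality, and you mention the sharp $O(\log n/\log\log n)$ balls-in-bins max-load when the crude bound $\log^2 n$ with tail $\exp(-\Omega(\log^2 N))$ already suffices.
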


The lemma implies that we can essentially replace $P_x$ with $P_x'$ in our analysis moving forward. Indeed, by the triangle inequality for the total variation distance, we have for any $v \in L$ that
\begin{equation}
\label{eq:tv_auxiliary}
\TV( P_\emptyset, P_v) \le \TV( P_\emptyset', P_v') + \TV( P_\emptyset', P_\emptyset) + \TV( P_v', P_v) = \TV ( P_\emptyset', P_v') + o(1),
\end{equation}
where the final equality is due to Lemma \ref{lemma:tv_auxiliary}. Hence it suffices to bound the total variation between the auxiliary distributions. 

\begin{proof}[Proof of Lemma \ref{lemma:tv_auxiliary}]
Let $x \in L \cup \{ \emptyset \}$. By the data processing inequality for the total variation distance, it holds that $\TV(P_x, P_x') \le \TV( \mu_x, \mu_x')$. By the definition of $\mu_x$ and $\mu_x'$, the two distributions can be coupled precisely when $G \sim \mu_x'$ satisfies $G \in \cH_x$. Hence 
\begin{equation}
\label{eq:tv_auxiliary_bound}
\TV(P_x, P_x') \le \TV(\mu_x, \mu_x') = \mu_x'(\cH_x^c).
\end{equation}
In light of \eqref{eq:tv_auxiliary_bound}, we proceed by proving that $\mu_x'( \cH_x^c) = o(1)$. Notice that for $G \sim \mu_x'$, all vertices except for those in $L$ have a deterministic degree which is at most 3, hence it suffices to study the degrees of vertices in $L$ alone. In the case $x = \emptyset$, it holds for every $v \in L$ that $\deg_G(v) - 1 \sim \Bin(N, 1/N)$. Bernstein's inequality implies
\begin{equation}
\label{eq:degree_bound}
\mu_\emptyset' \left( \deg_G(v) \ge \log^2 N \right) \le \mu_\emptyset' \left( \deg_G(v) - 1 \ge 1 + \frac{1}{2} \log^2 N \right) \le \mathrm{exp} \left( - \frac{2}{3} \log^2 N \right).
\end{equation}
It follows that
\begin{align}
\mu_\emptyset' \left( \cH_\emptyset^c \right) & = \mu_\emptyset' \left( \exists v \in L : \deg_G(v) > \log^2 n \right) \nonumber \\
& \stackrel{(a)}{\le} \mu_\emptyset' \left( \exists v \in L : \deg_G(v) > \log^2 N \right) \nonumber \\
\label{eq:auxiliary_prob_bound_emptyset}
& \stackrel{(b)}{\le} N \exp \left( - \frac{2}{3} \log^2 N \right) = o(1).
\end{align}
Above, $(a)$ follows since $N \le n$, and $(b)$ is due to a union bound over the $N$ elements of $L$. 

We now turn to the case where $x \in L$. If $v \in L \setminus \{x \}$, then $\deg_G(v) - 1 \sim \Bin \left( N , \left( 1 - \frac{D}{N} \right) \frac{1}{N} \right)$, which is stochastically bounded by $\Bin( N , 1/N)$. Following the same steps as \eqref{eq:degree_bound}, we have that
\[
\mu_x' \left( \deg_G(v) \ge \log^2 n \right) \le \mu_x' \left( \deg_G(v) \ge \log^2 N \right) \le \exp \left( - \frac{2}{3} \log^2 N \right).
\]
On the other hand, when $v = x$, $\deg_G(v)$ stochastically dominates $\Bin( N, D / N)$. By Bernstein's inequality, 
\[
\mu_x' \left( \deg_G(v) \le \frac{D}{2} \right) \le \exp \left( - \frac{3}{56} D \right).
\]
It follows that 
\begin{align}
\mu_x' \left( \cH_x^c \right) & = \mu_x' \left( \exists v \in L \setminus \{x \} : \deg_G (v) > \log^2 n \text{ or } \deg_G(x) < \frac{D}{2} \right) \nonumber \\
& \le \sum_{v \in L \setminus \{x \} } \mu_x' \left( \deg_G(v) > \log^2 n \right) + \mu_x' \left( \deg_G(x) < \frac{D}{2} \right) \nonumber \\
\label{eq:auxiliary_prob_bound}
& \le N \exp \left ( - \frac{2}{3} \log^2 N \right) + \exp \left( - \frac{3}{56} D \right) = o(1).
\end{align}
Together, \eqref{eq:auxiliary_prob_bound_emptyset}, \eqref{eq:auxiliary_prob_bound} and \eqref{eq:tv_auxiliary_bound} prove the claim.
\end{proof}

\subsection{Impossibility of detection: Proof of Lemma \ref{lemma:detection_impossibility}}

The advantage of using the auxiliary distributions is that they are product distributions conditioned on the infection times in $H$, which makes it easier to analyze the infection times in $[N]$. To see this concretely, let us assume without loss of generality that $\lambda = 1$, and define
\[
\mathbf{E}(j) = (E_1(j), \ldots, E_K(j) ) \stackrel{i.i.d.}{\sim} \mathrm{Exp}(1)^{\otimes K}, \qquad j \in [N].
\]
Let us also define the collection of independent variables $\{ U(j) \}_{j \in [N]}$ where $U(j) \sim \mathrm{Unif}(L)$ denotes the random neighbor of the vertex $j$ in the construction of $G \sim P_\emptyset'$. We then have the representation
\begin{equation}
\label{eq:infection_times_P0}
\mathbf{T}(j) = \mathbf{T}( U(j)) + \mathbf{E}(j), \qquad j \in [N].
\end{equation}
On the other hand, if $G \sim P_v'$ for some $v \in L$, then we have the representation
\begin{equation}
\label{eq:infection_times_Pv}
\mathbf{T}(j) = \begin{cases}
\mathbf{T}( U(j) ) + \mathbf{E}(j) & \text{ with probability $1 - D / N$} \\
\mathbf{T}(v) + \mathbf{E}(j) & \text{ with probability $D / N$.}
\end{cases}
\end{equation}

We can also succinctly characterize the distributions of infection times in $[N]$ for the cases described in \eqref{eq:infection_times_P0} and \eqref{eq:infection_times_Pv}. Define the following (conditional) distributions over $\mathbf{t} : = (t_1, \ldots, t_K) \in \R_+^K$,
\begin{align*}
f_v(\mathbf{t}) & : = \prod_{i = 1}^K e^{ - (t_i - T_i(v))} \mathbf{1}( t_i \ge T_i(v) ) = e^{- \sum_{i = 1}^K t_i } \prod_{i = 1}^K e^{T_i(v) } \mathbf{1} ( t_i \ge T_i(v) ) \\
f_\emptyset ( \mathbf{t}) & : = \frac{1}{N} \sum_{w \in L} f_w(\mathbf{t}) = \frac{e^{ - \sum_{i = 1}^K t_i } }{N} \sum_{w \in L} \prod_{i = 1}^K e^{T_i(w)} \mathbf{1} ( t_i \ge T_i(w) ).
\end{align*}
Then it is readily seen that 
\begin{equation}
\label{eq:conditional_infection_times}
\mathrm{Law} \left( \left. \{ \mathbf{T}(j) \}_{j \in [N]} \right| \{ \mathbf{T}(u) \}_{u \in L} \right) = \begin{cases}
f_\emptyset^{\otimes N} & G \sim P_\emptyset' \\
\left[ \left( 1 - \frac{D}{N} \right) f_\emptyset + \frac{D}{N} f_v \right]^{\otimes N} & G \sim P_v', v \in L.
\end{cases}
\end{equation}
Our strategy moving forward is to determine when the distributions in \eqref{eq:conditional_infection_times} are indistinguishable, which amounts to bounding the total variation distance between them. To do so, we will instead bound a larger distance measure -- the $\chi^2$ divergence. We recall that the $\chi^2$ divergence between probability measures $P$ and $Q$ is defined as 
\[
\chi^2 (P \| Q) : = \int \frac{( dP - dQ)^2 }{dQ} .
\]
The total variation distance and the $\chi^2$ divergence can be related through the inequality \cite[Proposition 7.15]{polyanskiy_wu_information_theory}
\[
\TV( P, Q) \le 2 \sqrt{ \chi^2 ( P \| Q )}.
\]
For our setting, the $\chi^2$ divergence has two advantageous properties. First, it tensorizes; that is, for any positive integer $k$, it holds that \cite[Chapter 7.12]{polyanskiy_wu_information_theory}
\[
\chi^2 \left( \left.  P^{\otimes k} \right \| Q^{\otimes k} \right) = \left( 1 + \chi^2 ( P \| Q ) \right)^k - 1.
\]
Additionally, the formula for the $\chi^2$ divergence involves the difference between the densities of $P$ and $Q$, which makes it convenient to analyze mixture distributions, as is the case for our setting.

We apply these ideas to the conditional distributions in \eqref{eq:conditional_infection_times}. Conditioned on $\{ \mathbf{T}(u) \}_{u \in H}$, we have that
\begin{align*}
 \TV \left( f_\emptyset^{\otimes N}, \left[ \left( 1 - \frac{D}{N} \right) f_\emptyset + \frac{D}{N} f_v \right]^{\otimes N} \right) & \le 2 \sqrt{  \chi^2 \left(\left. \left[   \left( 1 - \frac{D}{N} \right) f_\emptyset + \frac{D}{N} f_v \right]^{\otimes N} \right \| f_\emptyset^{\otimes N} \right) } \\
 & = 2 \sqrt{  \left[ 1 + \chi^2 \left(\left.  \left( 1 - \frac{D}{N} \right) f_\emptyset + \frac{D}{N} f_v \right \| f_\emptyset \right) \right]^N - 1 } \\
 & \le 2 \sqrt{   \exp \left \{ N \chi^2 \left(\left.  \left( 1 - \frac{D}{N} \right) f_\emptyset + \frac{D}{N} f_v \right \| f_\emptyset \right)  \right \} - 1 }.
\end{align*}
In particular, we see that the conditional total variation tends to 0 if 
\begin{equation}
\label{eq:chi_squared_condition}
\chi^2 \left(\left.  \left( 1 - \frac{D}{N} \right) f_\emptyset + \frac{D}{N} f_v \right \| f_\emptyset \right) = o \left( \frac{1}{N} \right).
\end{equation}
We proceed by bounding the $\chi^2$ divergence between the two distributions. Letting $\mathbf{t} = ( t_1, \ldots, t_K) \in \R^K$, it holds that
\begin{equation}
\label{eq:chi_squared_bound}
\chi^2 \left(\left.  \left( 1 - \frac{D}{N} \right) f_\emptyset + \frac{D}{N} f_v \right \| f_\emptyset \right)  = \left( \frac{D}{N} \right)^2 \int_{\R_+^K} \frac{ (f_v( \mathbf{t}) - f_\emptyset ( \mathbf{t}))^2 }{f_\emptyset ( \mathbf{t})}  d\mathbf{t} \le \left( \frac{D}{N} \right)^2 \int_{\R_+^K} \left( \frac{f_v ( \mathbf{t}) }{f_\emptyset( \mathbf{t})} \right) f_v( \mathbf{t})  d\mathbf{t}.  
\end{equation}
To bound the $\chi^2$ divergence, we therefore need to study the likelihood ratio $f_v / f_\emptyset$. This is accomplished in the following result. 

\begin{lemma}
\label{lemma:likelihood_ratio}
For every $v \in L$, it holds with probability $1 - o(1)$ that
\[
\sup_{\mathbf{t} \in \R_+^K} \frac{ f_v(\mathbf{t}) }{ f_\emptyset( \mathbf{t} ) } \le \log^{5K} n.
\]
\end{lemma}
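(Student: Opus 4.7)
The first move is to note that the supremum is attained at $\mathbf{t} = \mathbf{T}(v)$. Indeed, $f_v(\mathbf{t})$ vanishes unless $\mathbf{t} \ge \mathbf{T}(v)$ componentwise, on this region the numerator of $f_v/f_\emptyset$ equals the constant $N e^{\sum_i T_i(v)}$, and the denominator $\sum_{w \in L} e^{\sum_i T_i(w)} \mathbf{1}(\mathbf{T}(w) \le \mathbf{t})$ is non-decreasing in every $t_i$. Hence the lemma reduces to showing that with probability $1-o(1)$,
\[
\sum_{w \in L} e^{\sum_i T_i(w)} \mathbf{1}(\mathbf{T}(w) \le \mathbf{T}(v)) \;\ge\; \frac{N \, e^{\sum_i T_i(v)}}{\log^{5K} n}.
\]

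To analyze the left-hand side, I would use the decomposition $T_i(w) = T_i(u_w) + Y_i^{(w)}$, where $u_w \in H'$ is the endpoint of the length-$k$ path attached to $w$, $k := \log^5 n$, and $\{Y_i^{(w)}\}_{w \in L, i \in [K]}$ are independent $\mathrm{Gamma}(k,1)$ variables, also independent of $\mathcal{T}_{H'} := \{T_i(u)\}_{u \in H', i \in [K]}$; this is immediate from Proposition \ref{prop:fpp} and the disjointness of the added paths. Two high-probability ``typical'' events will suffice: (E1) $\max_{u \in H', i} T_i(u) \le C \log n$ for some constant $C = C(K)$, obtained from Gamma tail bounds on the $v_0$-to-$u$ paths of length $\le 2 \log N$ in $H'$ followed by a union bound over $u$ and $i$; and (E2) $Y_i^{(v)} \in [k - c\sqrt{k}, k + c\sqrt{k}]$ for all $i$ and some absolute constant $c$, by standard Gamma concentration.

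The core of the argument is a count of ``competitor'' leaves. Conditioning on $\mathcal{T}_{H'}$ and $\mathbf{T}(v)$ restricted to $(\mathrm{E1}) \cap (\mathrm{E2})$, define $W^* := \{w \in L \setminus \{v\} : T_i(w) \in [T_i(v) - 1, T_i(v)] \text{ for every } i\}$. The indicators $\mathbf{1}(w \in W^*)$ are conditionally independent across $w \neq v$, and $w \in W^*$ iff $Y_i^{(w)} \in [b_i^{(w)} - 1, b_i^{(w)}]$ for all $i$, where $b_i^{(w)} := T_i(v) - T_i(u_w)$. On the typical event each $b_i^{(w)}$ lies in $[k - 2c\sqrt{k}, k + 2c\sqrt{k}]$, because $T_i(u_v), T_i(u_w) = O(\log n)$ is negligible compared to $\sqrt{k} = \log^{5/2} n$. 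A Stirling/local-limit estimate then yields $f_{\mathrm{Gamma}(k,1)}(y) \ge (2 \pi k)^{-1/2} e^{-2c^2}$ uniformly on this range, so $\Pr(w \in W^* \mid \mathcal{T}_{H'}, \mathbf{T}(v)) \ge C_1^{-1} k^{-K/2}$ for a constant $C_1 = C_1(c, K)$. Since $\mathbb{E}[|W^*| \mid \cdot\,] \ge N / (C_1 k^{K/2}) = N^{1 - o(1)}$, Chernoff yields $|W^*| \ge N / (2 C_1 k^{K/2})$ with probability $1 - o(1)$. Each $w \in W^*$ contributes at least $e^{\sum_i T_i(v) - K}$ to the denominator, so the denominator is at least $N e^{-K} e^{\sum_i T_i(v)} / (2 C_1 \log^{5K/2} n)$, giving $\sup_\mathbf{t} f_v(\mathbf{t})/f_\emptyset(\mathbf{t}) \le 2 C_1 e^K \log^{5K/2} n$, which is $\le \log^{5K} n$ for all sufficiently large $n$.

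The main obstacle is the counting step: we need an accurate lower bound on the Gamma$(k, 1)$ density over a $\sqrt{k}$-neighborhood of its mean, and we need the diameter of $H'$ to be negligible compared to $\sqrt{k}$ so that this density lower bound applies uniformly across leaves. Fortunately the lemma's generous $\log^{5K}$ bound (rather than the sharper $\log^{5K/2}$ bound the argument actually delivers) leaves plenty of slack to absorb the constants from the local-limit and Chernoff estimates.
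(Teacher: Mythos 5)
Your first move — that the supremum of $f_v/f_\emptyset$ is attained at $\mathbf{t}=\mathbf{T}(v)$ and the problem reduces to lower-bounding $\frac{1}{N}\sum_{w\in L}\prod_i e^{T_i(w)-T_i(v)}\mathbf 1(T_i(w)\le T_i(v))$ — matches the paper exactly, as does your reliance on the FPP decomposition $T_i(w)=T_i(u_w)+Y_i^{(w)}$ and the conditional independence across leaves. Where you diverge is in the middle: the paper conditions on $\{\mathbf T(z)\}_{z\in H'}$ and $\mathbf T(v)$, applies Hoeffding to the $[0,1]$-bounded summands, and lower-bounds the \emph{exact} conditional expectation via Stirling and a Taylor inequality for $\log(1+z)$ (Lemma \ref{lemma:exponential_inequality}); you instead count the ``competitor'' set $W^*$ and apply Chernoff. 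The two are morally the same — both ultimately hinge on the Gamma$(k,1)$ density being $\Theta(k^{-1/2})$ within $O(\sqrt{k})$ of its mean — but the expectation computation avoids the need for a pointwise local-limit estimate and keeps the constants tame by design.

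The genuine gap is your event (E2). As written it asserts $|Y_i^{(v)}-k|\le c\sqrt{k}$ \emph{for every} $i\in[K]$ with $c$ an absolute constant and claims this has probability $1-o(1)$. That is false: by the CLT, $\p(|Y_i^{(v)}-k|\le c\sqrt k)$ converges to a fixed constant $p_0(c)<1$, so $\p(\text{E2})\to p_0(c)^K$, which is bounded away from $1$ even for constant $K$ and tends to $0$ as $K\to\infty$ — and the theorem's regime allows $K=\Theta(\log n/\log\log n)$. If you instead let $c$ grow so that (E2) does hold whp, you need roughly $c^2=\Theta(\log\log n)$, and then the uniform density bound $e^{-2c^2}$ contributes $e^{-\Theta(K\log\log n)}=n^{-\Theta(1)}$ to $C_1$; at that point $N/(C_1 k^{K/2})$ is no longer $N^{1-o(1)}$ for $\alpha$ near $0$, and the Chernoff step breaks. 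The fix is to abandon coordinatewise control in favor of the aggregate condition $\sum_{i=1}^K(Y_i^{(v)}-k)^2\le 2Kk$ — this is precisely condition 3 of the paper's Definition \ref{def:E} — and to lower-bound the product of densities by $\prod_i f(b_i^{(w)})\gtrsim (2\pi k)^{-K/2}\exp\bigl(-\sum_i(b_i^{(w)}-k)^2/(2k)\bigr)\ge (2\pi k)^{-K/2}e^{-O(K)}$, using the Gaussian form of the Gamma density in the bulk. Then $C_1=e^{O(K)}=n^{o(1)}$ and the rest of your counting argument goes through, even delivering the slightly sharper $\log^{5K/2}n$ bound you noted. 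So the architecture is sound and a legitimate alternative to the paper's Hoeffding route, but as stated the proof has a real hole in the concentration step that would have to be repaired along these lines.
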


Before proving Lemma \ref{lemma:likelihood_ratio}, we show how it can be used to prove the main result.

\begin{proof}[Proof of Lemma \ref{lemma:detection_impossibility}]
Let $\cA_v$ denote the event described in Lemma \ref{lemma:likelihood_ratio}; note that $\cA_v$ is measurable with respect to $\{ \mathbf{T}(u) \}_{u \in L}$. On this event, it follows from \eqref{eq:chi_squared_bound} that
\[
\chi^2 \left(\left.  \left( 1 - \frac{D}{N} \right) f_\emptyset + \frac{D}{N} f_v \right \| f_\emptyset \right) \le \left( \frac{D}{N} \right)^2 \int_{\R_+^K} \left( \log^{5K} n \right) f_v ( \mathbf{t} ) d \mathbf{t} \le \left( \frac{D}{N} \right)^2 \log^{5K} n. 
\]
Since $D/N = n^{\alpha - 1 + o(1)}$, for any $\epsilon > 0$ the condition \eqref{eq:chi_squared_condition} is satisfied if
\begin{equation}
\label{eq:TV_K_bound}
K \le \frac{1 - 2 \alpha - \epsilon}{5} \left( \frac{\log n}{ \log \log n} \right).
\end{equation}
We now translate our results on the conditional distributions $f_\emptyset^{\otimes N}$ and $[ (1 - D / N) f_\emptyset + (D/N) f_v ]^{\otimes N}$ to their \emph{unconditional} counterparts $P_\emptyset'$ and $P_v'$. If \eqref{eq:TV_K_bound} holds, then
\begin{align*}
\TV( P_\emptyset', P_v') & \stackrel{(a)}{\le} \E \left[ \TV \left( f_\emptyset^{\otimes N}, \left[ \left( 1 - \frac{D}{N} \right) f_\emptyset + \frac{D}{N} f_v \right]^{\otimes N} \right) \right] \\
& \stackrel{(b)}{\le} \E \left[ \TV \left( f_\emptyset^{\otimes N}, \left[ \left( 1 - \frac{D}{N} \right) f_\emptyset + \frac{D}{N} f_v \right]^{\otimes N} \right) \mathbf{1}( \cA_v ) \right] + \p ( \cA_v^c) = o(1).
\end{align*}
Above, the inequality $(a)$ follows from the property that conditioning increases the total variation \cite[Theorem 7.5]{polyanskiy_wu_information_theory}, and inequality $(b)$ holds since the total variation is always upper bounded by 1. Finally, $\TV( P_\emptyset', P_v') = o(1)$ implies that $\TV(P_\emptyset, P_v) = o(1)$ as well in light of \eqref{eq:tv_auxiliary}. 
\end{proof}

In the remainder of this subsection, we study the likelihood ratio
\[
\frac{f_v( \mathbf{t}) }{f_\emptyset( \mathbf{t} ) } = \frac{ \prod_{i = 1}^K e^{T_i(v) } \mathbf{1}( t_i \ge T_i(v) ) }{\frac{1}{N} \sum_{w \in L} \prod_{i = 1}^K e^{T_i(w)} \mathbf{1}( t_i \ge T_i(w))}.
\]
When there exists $i \in [K]$ such that $t_i < T_i(v)$, it is clear from the formula that the likelihood ratio is zero. On the other hand, when $t_i \ge T_i(v)$ for all $i \in [K]$, 
\begin{align*}
\frac{f_v( \mathbf{t}) }{f_\emptyset( \mathbf{t} ) } & = \frac{ \prod_{i = 1}^K e^{T_i(v)} }{ \frac{1}{N} \sum_{w \in L} \prod_{i = 1}^K e^{T_i(w)} \mathbf{1}(t_i \ge T_i(w)) } \\
& \le \frac{ \prod_{i = 1}^K e^{T_i(v) } }{ \frac{1}{N} \sum_{w \in L} \prod_{i = 1}^K e^{T_i(w)} \mathbf{1}( T_i(v) \ge T_i(w)) } \\
& = \left( \frac{1}{N} \sum_{w \in L} \prod_{i = 1}^K e^{T_i(w) - T_i(v)} \mathbf{1}( T_i(v) \ge T_i(w) ) \right)^{-1}.
\end{align*}
Above, the inequality on the second line holds since $\mathbf{1}(T_i(v) \ge T_i(w)) \le \mathbf{1}( t_i \ge T_i(w))$ if $t_i \ge T_i(v)$. Importantly, the upper bound derived for the likelihood ratio does not depend on $\mathbf{t}$. Hence, to control $\sup_{\mathbf{t} \in \R_+^K} f_v(\mathbf{t}) / f_\emptyset( \mathbf{t})$, we establish a lower bound for the quantity
\begin{equation}
\label{eq:likelihood_inverse}
\frac{1}{N} \sum_{w \in L} \prod_{i = 1}^K e^{T_i(w) - T_i(v)} \mathbf{1}( T_i(v) \ge T_i(w) ) .
\end{equation}

We remark that the analysis of \eqref{eq:likelihood_inverse} is greatly facilitated by the way in which the graph $H$ is constructed (see Definition \ref{def:H}). To see this concretely, for each $v \in L$ let $v'$ be the first vertex in $H'$ on the unique path in $H$ connecting $v$ to $H'$. Then we can write
\begin{equation}
\label{eq:T_X_distribution}
T_i(v) = T_i(v') + X_i(v), \qquad i \in [K],
\end{equation}
where $X_i(v)$ is the time it takes for the cascade to spread from $v'$ to $v$. Importantly, as the paths added to $H'$ to form $H$ are all disjoint, $\{ X_i(v) \}_{v \in L}$ is a collection of i.i.d. $\mathrm{Gamma}( \log^5 N)$ random variables. 
Since the diameter of $H'$ is assumed to be much smaller than $\log^5 N$, we can show that $T_i(v') = o (\log^5 N)$, and the behavior of $T_i(v)$ is largely dictated by $X_i(v)$, a random variable that is independent across $v \in L$. As we shall see, this reduction to i.i.d. random variables leads to a tractable analysis of \eqref{eq:likelihood_inverse}. 

To formalize these ideas, we begin by defining the following ``nice'' event. 

\begin{definition}
\label{def:E}
Let $v \in L$. The event $\cE_v$ holds if and only if the following conditions are satisfied:
\begin{enumerate}
\item $T_i(z) \le \log^2 N$ for all $z \in H'$ and $i \in [K]$.
\item  $T_i(v) \ge \frac{2}{3} \log^5 N$ for all $i \in [K]$.
\item $ \sum_{i = 1}^K \left ( X_i(v) - \log^5 N \right )^2 \le 2K \log^5 N$.
\end{enumerate}
\end{definition}

Importantly, as the following result shows, $\cE_v$ holds with high probability. 

\begin{lemma}
For all $v \in L$, it holds that $\p( \cE_v) = 1 - o(1)$. 
\end{lemma}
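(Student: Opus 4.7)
My plan is to establish each of the three conditions defining $\cE_v$ individually with probability $1 - o(1)$ and then take a union bound. For all three, I would exploit the first-passage-percolation representation in Proposition~\ref{prop:fpp}, which realizes infection times as minimum-weight paths with i.i.d.\ $\mathrm{Exp}(\lambda)$ edge weights.

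Conditions~(1) and~(2) are straightforward Chernoff arguments. For condition~(1), since $H'$ is a tree of diameter at most $2 \log N$, every $z \in H'$ is reachable from $v_0$ via a path of length at most $2 \log N$, so by Proposition~\ref{prop:fpp}, $T_i(z)$ is stochastically dominated by a $\mathrm{Gamma}(2 \log N, \lambda)$ variable; a Chernoff tail bound gives $\p(T_i(z) > \log^2 N) \le \exp(-\Omega(\log^2 N))$, and a union bound over the at most $N$ vertices of $H'$ and the $K$ cascades closes the case. For condition~(2), $T_i(v) \ge X_i(v)$ with $X_i(v) \sim \mathrm{Gamma}(\log^5 N, \lambda)$, and $X_i(v)$ is independent of the dynamics inside $H'$ because the added path is edge-disjoint from $H'$; the lower-tail Chernoff bound for Gamma yields $\p(X_i(v) < \frac{2}{3} \log^5 N) \le \exp(-c \log^5 N)$ for some $c > 0$, and a union bound over the $K$ cascades finishes.

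The main obstacle is condition~(3). Set $s := \log^5 N$. Each $X_i(v) - s$ is a sum of $s$ centered exponentials with mean $0$ and variance $s/\lambda^2$; using the fourth central moment of a Gamma, $\mathrm{Var}((X_i(v) - s)^2) \asymp s^2$, so $\mathrm{Var}\left(\sum_{i=1}^K (X_i(v) - s)^2\right) \asymp Ks^2$ while the mean is $Ks$. Chebyshev then yields failure probability $O(1/K)$, which is $o(1)$ once $K \to \infty$. The subtle point is that the condition demands that a sum of squared Gamma deviations remain within a constant factor of its mean, and Chebyshev is only adequate when $K$ grows with $N$; for the sharper part of the range I would instead invoke a chi-squared-style exponential concentration in the spirit of Laurent--Massart, justified by the fact that $(X_i(v) - s)/\sqrt{s}$ is essentially $N(0,1)$ when $s = \log^5 N \to \infty$, so that $\sum_i (X_i(v) - s)^2 / s$ behaves like $\chi^2_K$ up to negligible error. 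Combining the three estimates via a union bound then yields $\p(\cE_v^c) = o(1)$.
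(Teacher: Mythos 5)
Your proposal is essentially the paper's argument, with one structural shortcut that the paper takes and you do not. Condition (1) is handled identically in both: Proposition~\ref{prop:fpp} gives stochastic domination of $T_i(z)$ by a $\mathrm{Gamma}(2\log N,\lambda)$ variable, a Chernoff bound gives a tail of order $e^{-c\log^2 N}$, and a union bound over $z \in H'$ and $i \in [K]$ closes the case. For condition (3), the paper computes the same Gamma fourth central moment (yielding $\mathrm{Var}\big((X_i(v)-R)^2\big) = 2R^2 + 6R$ with $R = \log^5 N$, after WLOG setting $\lambda = 1$) and applies Chebyshev to get the same $O(1/K)$ bound. The one difference is condition (2): you bound it directly with a lower-tail Chernoff on $X_i(v) \sim \mathrm{Gamma}(\log^5 N, \lambda)$, whereas the paper notes that on $\cE_{v,1}$, a failure of $\cE_{v,2}$ forces $(X_i(v)-\log^5 N)^2 \ge (\log^5 N / 4)^2 \gg 2K\log^5 N$, so $\cE_{v,1} \cap \cE_{v,2}^c \subseteq \cE_{v,3}^c$ and condition (2) is subsumed. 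Both routes are correct; the paper's simply avoids one extra Chernoff estimate.

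One caveat on your hedge: the Laurent--Massart / $\chi^2$ fix does not actually rescue the constant-$K$ case. With threshold $2K\log^5 N$, the relevant deviation in your Gaussian approximation is $\chi^2_K > 2K$, which for constant $K$ has a fixed, $n$-independent probability no matter how sharp the concentration inequality — so no exponential tail bound can make it $o(1)$. The paper's $8/K$ bound likewise requires $K \to \infty$, which is the regime of Theorem~\ref{thm:impossibility}; the constant-$K$ case of that theorem is recovered by monotonicity (impossibility with more traces implies impossibility with fewer), not by sharpening this lemma.
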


\begin{proof}
We let $\cE_{v,1}, \cE_{v,2}$ and $\cE_{v,3}$ denote the events corresponding to the first, second and third conditions of $\cE_v$ in Definition \ref{def:E}. 
We start by showing that $\cE_{v,1}$ holds with probability $1 - o(1)$. 
Since $H'$ is a tree and the diameter of $H'$ is at most $2 \log N$ (see Definition \ref{def:H}), we have that $T_i(z)$ is stochastically upper bounded by a $\mathrm{Gamma}(2 \log N, 1)$ random variable. Hence
\[
 \p \left( T_i(z) \ge \log^2 N \right) \le e^{- \frac{1}{2} \log^2 N} 2^{\log N} \le e^{- \frac{1}{4} \log^2 N},
\]
where the first inequality is due to a Chernoff bound, and the second inequality holds for $N$ sufficiently large. Taking a union bound over all $z \in H'$ and $i \in [K]$, we obtain 
\begin{equation}
\label{eq:E1_bound}
\p ( \cE_{v,1}^c) = \p \left( \max_{z \in H', i \in [K]} T_i(z) \ge \log^2 N \right) \le \sum_{z \in H', i \in [K]} \p \left( T_i(z) \ge \log^2 N \right) \le N K e^{- \frac{1}{4} \log^2 N} = o(1).
\end{equation}
We now turn to the events $\cE_{v,2}$ and $\cE_{v,3}$. Notice that if $\cE_{v,1}$ holds and there exists $i \in [K]$ such that $T_i(v) < \frac{2}{3} \log^5 N$, then 
\begin{equation}
\label{eq:E2_implies_E3}
\sum_{i = 1}^K \left( X_i(v) - \log^5 N \right)^2 \ge \left( \frac{\log^5 N}{4} \right)^2 > 2K \log^5 N,
\end{equation}
where the final inequality holds for $N$ sufficiently large. As a consequence of \eqref{eq:E2_implies_E3}, we have that 
\begin{equation}
\label{eq:E2_bound}
\cE_{v,1} \cap \cE_{v,2}^c \subseteq \cE_{v,3}^c,
\end{equation}
so we focus on bounding the probability of the latter event. We do so through a second moment argument. As a shorthand, let $R : = \log^5 N$. 
The first moment of each of the summands is $\E [ (X_i(v) - R)^2 ] = \mathrm{Var}(X_i(v)) = R$, where we have used the fact that $X_i(v) \sim \mathrm{Gamma}(R,1)$. The second moment is
\begin{align*}
\E \left[ (X_i(v) - R)^4 \right] & = \E [ X_i(v)^4 ] - 4 R \E [ X_i(v)^3 ] + 6 R^2 \E [ X_i(v)^2 ] - 4 R^3 \E [ X_i(v) ] + R^4 \\
& = R ( R + 1) ( R + 2) ( R + 3) - 4R^2 (R + 1) (R + 2) + 6 R^3 (R + 1) - 4 R^4 + R^4 \\
& = 3R^2 + 6R.
\end{align*}
In particular, $\mathrm{Var} ( (X_i(v) - R)^2 ) = 2R^2 + 6R \le 8R^2$, and furthermore, 
\[
\Var \left (  \sum_{i = 1}^K ( X_i(v) - R)^2 \right) =  \sum_{i = 1}^K \Var ( (X_i(v)-  R)^2 ) \le 8K R^2.
\]
Finally, by Chebyshev's inequality, 
\begin{equation}
\label{eq:E3_bound}
\p ( \cE_{v,3}^c ) = \p \left(  \sum_{i = 1}^K ( X_i(v) - R)^2 > 2KR \right) \le \frac{1}{K^2 R^2} \Var \left (\sum_{i = 1}^K ( X_i(v) - R)^2 \right) \le \frac{8}{K} = o(1). 
\end{equation}
Together, \eqref{eq:E1_bound}, \eqref{eq:E2_bound} and \eqref{eq:E3_bound} prove that $\p ( \cE_v^c) = o(1)$ as claimed.
\end{proof}

Next, we state a simple consequence of Taylor's theorem that will be useful for the proof of Lemma \ref{lemma:likelihood_ratio}.

\begin{lemma}
\label{lemma:exponential_inequality}
Suppose that $a,b$ are real numbers such that $b \neq 0$ and $a / b \ge - 1/2$. Then 
\[
e^{-a} \left( 1 + \frac{a}{b} \right)^b \ge e^{ - 2a^2 / b}.
\]
\end{lemma}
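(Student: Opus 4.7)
The plan is to take logarithms of both sides and make the substitution $x := a/b$. The hypothesis $a/b \ge -1/2$ ensures $1 + x \ge 1/2 > 0$, so both sides of the claimed inequality are positive and this transformation is valid. Writing $a = bx$ and $a^2/b = bx^2$, the desired inequality $e^{-a}(1 + a/b)^b \ge e^{-2a^2/b}$ rearranges to
\[
b \,\bigl[ \log(1+x) - x + 2x^2 \bigr] \;\ge\; 0.
\]
In all applications of the lemma in the paper (e.g., via the $\mathrm{Gamma}(\log^5 N)$ variables $X_i(v)$), the parameter $b$ is a large positive quantity, so it suffices to prove the scalar inequality $f(x) := \log(1+x) - x + 2x^2 \ge 0$ for every $x \ge -1/2$.

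For this scalar inequality I would use elementary calculus. A direct computation gives
\[
f'(x) \;=\; \frac{1}{1+x} - 1 + 4x \;=\; \frac{x(3 + 4x)}{1 + x}.
\]
On the interval $[-1/2, 0]$ the factor $x$ is nonpositive while $3 + 4x \ge 1$ and $1 + x > 0$, so $f'(x) \le 0$; on $[0, \infty)$ all three factors are nonnegative, so $f'(x) \ge 0$. Hence $f$ decreases on $[-1/2, 0]$ and increases on $[0, \infty)$, attaining its global minimum on $[-1/2, \infty)$ at $x = 0$, where $f(0) = 0$. Therefore $f(x) \ge 0$ throughout the domain, which is the required inequality.

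I do not anticipate any genuine obstacle here, since the problem reduces to a standard single-variable convexity/monotonicity argument. The only mild subtlety is that the hypothesis merely asks for $b \neq 0$: multiplying by $b$ in the reduction above reverses the inequality when $b < 0$, and indeed one can check that the stated bound fails in that regime (for instance at $a = b = -1$ one gets $e/2 < e^2$). Since the downstream invocation in the proof of Lemma~\ref{lemma:likelihood_ratio} is with $b > 0$, this caveat does not affect the subsequent argument.
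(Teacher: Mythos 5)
Your argument is correct, and it is essentially the same reduction the paper uses: taking logarithms, the claim becomes $b\,[\log(1+z) - z + 2z^2] \ge 0$ with $z = a/b \ge -1/2$, which (for $b > 0$) follows from the scalar bound $\log(1+z) \ge z - 2z^2$. Where you differ is in how you justify that scalar bound: the paper simply attributes it to ``Taylor's theorem,'' which is really a pointer rather than a proof (one still has to bound the remainder term, and the factor $2$ is not what a naive second-order Taylor expansion around $z=0$ produces). Your monotonicity argument---computing $f'(x) = x(3+4x)/(1+x)$, checking its sign on $[-1/2,0]$ and $[0,\infty)$, and noting $f(0)=0$---is a self-contained and cleaner proof of exactly the inequality the paper invokes.

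You also make an observation that is, strictly speaking, an erratum for the lemma as stated: the hypothesis $b \neq 0$ is too weak, since multiplying the scalar inequality by a negative $b$ reverses it, and your counterexample $a = b = -1$ (giving $e/2 < e^2$) confirms the conclusion fails there. The hypothesis should read $b > 0$. This does not affect the paper's downstream use in the proof of Lemma~\ref{lemma:likelihood_ratio}, where $b = R = \log^5 N$ is positive, but it is a genuine gap in the lemma statement that the paper's own proof silently assumes away when it multiplies through by $b$.
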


\begin{proof}
Taylor's theorem implies that for any $z \ge -1/2$, it holds that $\log(1 + z) \ge z - 2 z^2$. Using this inequality, we have that
\begin{align*}
\log \left( e^{-a} \left( 1 + \frac{a}{b} \right)^b \right) & = - a + b \log \left( 1 + \frac{a}{b} \right)  \ge - a + b \left( \frac{a}{b} - 2 \left( \frac{a}{b} \right)^2 \right) = - \frac{2a^2}{b}.
\end{align*}
\end{proof}

We now prove the main result of this section.

\begin{proof}[Proof of Lemma \ref{lemma:likelihood_ratio}]
Throughout this proof, we shall condition on the infection times in $H'$, given by the collection $\{ \mathbf{T}(z) \}_{z \in H'}$, as well as $v$, given by $\mathbf{T}(v)$. We will also assume that the ``nice'' event $\cE_v$ holds, which is measurable with respect to $\{ \mathbf{T}(z) \}_{z \in H'}, \mathbf{T}(v)$. Finally, as a shorthand, we will denote $R := \log^5 N$.

Recall from Fact \ref{fact:conditional_independence_H} that $\{ \mathbf{T}(u) \}_{u \in L}$ is a collection of conditionally independent variables with respect to $\{ \mathbf{T}(z) \}_{z \in H'}$. As a result, if we condition on both $\{ \mathbf{T}(z) \}_{z \in H'}$ and $\mathbf{T}(v)$, then the terms
\begin{equation}
\label{eq:B_product_terms}
\prod_{i = 1}^K e^{T_i(w) - T_i(v)} \mathbf{1}( T_i(v) \ge T_i(w) ), \qquad w \in L,
\end{equation}
are mutually (conditionally) independent. Moreover, each of the terms in \eqref{eq:B_product_terms} are bounded between 0 and 1. Hoeffding's inequality therefore implies that
\begin{multline}
 \frac{1}{N} \sum_{w \in L} \prod_{i = 1}^K e^{T_i(w) - T_i(v)} \mathbf{1} ( T_i(v) \ge T_i(w))  \\
 \label{eq:B_conditional_expectation}
 \ge \E \left[ \left. \frac{1}{N} \sum_{w \in L} \prod_{i = 1}^K e^{T_i(w) - T_i(v)} \mathbf{1} ( T_i(v) \ge T_i(w)) \right \vert \{ \mathbf{T}(z) \}_{z \in H'}, \mathbf{T}(v) \right] - N^{-1/3},
\end{multline}
with probability at least $1 - 2 \exp( - 2 N^{1/3} )$. We proceed by studying the conditional expectation in \eqref{eq:B_conditional_expectation}. To this end, for any $w \in L$, we denote $w'$ to be the last vertex in $H'$ on the unique path starting from $v_0$ and ending at $w$; such a vertex exists since $H$ is connected. We have that
\begin{align}
& \E \left[ \left. \frac{1}{N} \sum_{w \in L} \prod_{i = 1}^K e^{T_i(w) - T_i(v)} \mathbf{1} ( T_i(v) \ge T_i(w)) \right \vert \{ \mathbf{T}(z) \}_{z \in H'}, \mathbf{T}(v) \right] \nonumber \\
 & \hspace{2cm} = \frac{1}{N} \sum_{w \in L} \E \left[ \left. \prod_{i = 1}^K e^{T_i(w) - T_i(v) } \mathbf{1}( T_i(v) \ge T_i(w)) \right \vert \{ \mathbf{T}(z) \}_{z \in H'}, \mathbf{T}(v) \right] \nonumber \\
 & \hspace{2cm} = \frac{1}{N} \sum_{w \in L} \E \left[ \left. \prod_{i = 1}^K e^{T_i(w) - T_i(v)} \mathbf{1}(T_i(v) \ge T_i(w)) \right \vert \mathbf{T}(w'), \mathbf{T}(v) \right]  \nonumber \\
 \label{eq:B_conditional_expectation_terms}
 & \hspace{2cm} = \frac{1}{N} \sum_{w \in L} \prod_{i = 1}^K \E \left [ \left. e^{T_i(w) - T_i(v) } \mathbf{1}(T_i(v) \ge T_i(w)) \right \vert T_i(w'), T_i(v) \right]. 
\end{align}
Above, the second equality uses that the conditional expectation depends only on the random variables $T_i(w), T_i(v)$, and that the distribution of $T_i(w)$ conditioned on $\{ \mathbf{T}(z) \}_{z \in H'}$ is the same as the distribution conditioned on $T(w')$ in light of \eqref{eq:T_X_distribution}. The final equality follows from the independence of the $T_i(w)$'s over $i \in [K]$. 

We continue by lower bounding the conditional expectations in \eqref{eq:B_conditional_expectation_terms}. Recall that for $w \in L$, we have from \eqref{eq:T_X_distribution} that $T_i(w) = T_i(w') + X_i(w)$ where $X_i(w) \sim \mathrm{Gamma}(R, 1)$. Then
\begin{align}
& \E \left[ \left. e^{T_i(w) - T_i(v) } \mathbf{1} ( T_i(v) \ge T_i(w) ) \right \vert  T_i(w'), T_i(v) \right] \nonumber \\
& \hspace{2cm} = e^{T_i(w') - T_i(v) } \E \left[ \left. e^{X_i(w)} \mathbf{1} (  T_i(v) - T_i(w') \ge X_i(w) ) \right \vert  T_i(w'), T_i(v) \right] \nonumber \\
& \hspace{2cm} \stackrel{(a)}{=} \frac{e^{T_i(w') - T_i(v) }}{\Gamma(R) } \int_0^{T_i(v) - T_i(w') } x^{R - 1} dx\nonumber  \\
& \hspace{2cm}= \frac{ e^{T_i(w') - T_i(v) } ( T_i(v) - T_i(w') )^R }{\Gamma(R + 1) } \nonumber \\
& \hspace{2cm} \stackrel{(b)}{\ge} \frac{ e^{T_i(w') - T_i(v) } ( T_i(v) - T_i(w') )^R }{3 \sqrt{R} (R / e)^R } \nonumber \\
& \hspace{2cm}= \frac{1}{3 \sqrt{R}} e^{ - (T_i(v) - T_i(w') - R)} \left( 1 + \frac{T_i(v) - T_i(w') - R}{R} \right)^R \nonumber \\
& \hspace{2cm} \stackrel{(c)}{\ge}  \frac{1}{3 \sqrt{R}} \exp \left \{ - \frac{2 ( T_i(v) - T_i(w') - R)^2 }{R} \right \} \nonumber \\
& \hspace{2cm} \stackrel{(d)}{\ge} \frac{1}{3 \sqrt{R} } \exp \left \{ - \frac{4}{R} ( X_i(v) - R)^2 - \frac{4}{R} \left( T_i(v') + T_i(w') \right)^2 \right \} \nonumber \\
\label{eq:B_term_expectation_lower_bound}
& \hspace{2cm} \stackrel{(e)}{\ge} \frac{e^{-16}}{3 \sqrt{R}} \exp \left \{ - \frac{4}{R} ( X_i(v) - R)^2  \right \}.
\end{align}
In the display above, the equality $(a)$ uses that $T_i(v) - T_i(w') \ge 0$, which holds since $T_i(v) \ge 2R / 3 \ge \sqrt{R} \ge \max_{z \in G'} T_i(z)$ on the event $\cE_v$. The inequality $(b)$ is due to Stirling's approximation. The inequality $(c)$ follows from Lemma \ref{lemma:exponential_inequality}, which can be applied here since $T_i(v) - T_i(w') \ge R / 2$ on $\cE_v$. The inequality $(d)$ uses $(a + b)^2 \le 2a^2 + 2b^2$, and the inequality $(e)$ follows since $T_i(v'), T_i(w') \le \sqrt{R}$ on the event $\cE_v$. 

Substituting the lower bound \eqref{eq:B_term_expectation_lower_bound} into \eqref{eq:B_conditional_expectation_terms}, we obtain 
\begin{multline}
\label{eq:B_expectation_lower_bound}
 \E \left[ \left. \frac{1}{N} \sum_{w \in L} \prod_{i = 1}^K e^{T_i(w) - T_i(v)} \mathbf{1} ( T_i(v) \ge T_i(w)) \right \vert \{ \mathbf{T}(z) \}_{z \in H'}, \mathbf{T}(v) \right] \\
 \ge \left( \frac{e^{-16}}{3 \sqrt{R}} \right)^K \exp \left \{ - \frac{4}{R} \sum_{i = 1}^K ( X_i(v) - R)^2 \right \} \stackrel{(f)}{\ge} \left( \frac{e^{-30} }{\sqrt{R}} \right)^K,
\end{multline}
where $(f)$ holds on the event $\cE_v$. 

Together, \eqref{eq:B_conditional_expectation} and \eqref{eq:B_expectation_lower_bound} imply that conditioned on $\{ \mathbf{T}(z) \}_{z \in H'}, \mathbf{T}(v)$, and provided $\cE_v$ holds, we have with probability at least $1 - 2 \exp ( 2 N^{1/3} )$ that 
\[
\frac{1}{N} \sum_{w \in L} \prod_{i = 1}^K e^{T_i(w) - T_i(v)} \mathbf{1}( T_i(v) \ge T_i(w) ) \ge \left( \frac{e^{-30}}{\sqrt{R}} \right)^K - N^{-1/3} \ge R^{-K}.
\]
Finally, the desired result holds since $\p (\cE_v) = 1 - o(1)$.
\end{proof}

\section{Proof of Proposition \ref{prop:fpp}}
\label{sec:fpp}

It suffices to show that the model described in Proposition \ref{prop:fpp} satisfies the equation \eqref{eq:SI}. As \eqref{eq:SI} describes a \emph{unique} Markov process, the claim will then follow. We start by showing that it is unlikely for two vertices to become infected in a very small time interval.

\begin{lemma}
\label{lemma:cascade_multiple_agents}
For any $t \ge 0$, it holds that
\[
\p \left( \left. | \cascade [t, t + \epsilon] | \ge 2 \right \vert \cF_t \right) = o ( \epsilon).
\]
\end{lemma}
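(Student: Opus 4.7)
The plan is to work directly in the first passage percolation (FPP) formulation -- since that is the model whose equivalence with \eqref{eq:SI} we are verifying -- and bound the probability by decomposing the event and repeatedly exploiting the memoryless property of exponential random variables. Throughout, let $B(t)$ denote the set of \emph{boundary edges} at time $t$, i.e., edges with exactly one endpoint in $\cascade(t)$; note $|B(t)| \le |E| < \infty$ almost surely.

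The first step is to observe that, conditioned on $\cF_t$, for each boundary edge $e = (u,v)$ with $u \in \cascade(t)$ and $v \notin \cascade(t)$, the fact that $v$ is not yet infected along $e$ means $F(e) > t - T(u)$, and by the memoryless property the conditional distribution of $F(e) - (t - T(u))$ is $\mathrm{Exp}(\lambda)$, independently across $e \in B(t)$. Interior edges with both endpoints in $V \setminus \cascade(t)$ have weights independent of $\cF_t$. This lets us ``reset the clocks'' from time $t$.

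The second step is to decompose $\{|\cascade[t, t + \epsilon]| \ge 2\}$ into two cases: (A) two \emph{distinct} susceptible vertices in $V \setminus \cascade(t)$ each have a boundary edge fire within $\epsilon$ of $t$; or (B) some boundary edge fires at a time $s \in [t, t + \epsilon]$ infecting a new vertex $v$, which in turn infects some $w \notin \cascade(t) \cup \{v\}$ via an edge $(v,w)$ by time $t + \epsilon$. Every realization of $\{|\cascade[t, t + \epsilon]| \ge 2\}$ falls into (A) or (B).

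The third step is a direct union-bound estimate on each case. For (A), summing over pairs of boundary edges with distinct susceptible endpoints, each pair fires within $\epsilon$ with probability at most $(1 - e^{-\lambda \epsilon})^2 \le \lambda^2 \epsilon^2$, yielding $\p(A \mid \cF_t) \le \binom{|B(t)|}{2} \lambda^2 \epsilon^2 = O(\epsilon^2)$. For (B), one fixes a boundary edge $e_1$ and a second (interior or boundary) edge $e_2$ sharing the newly infected endpoint $v$; conditioning on $F(e_1) = s - T(u) \in [t - T(u), t + \epsilon - T(u)]$ and integrating, the joint probability that $e_1$ fires in $[t, t + \epsilon]$ and then $e_2$ fires before $t + \epsilon$ is bounded by $\int_0^\epsilon \lambda e^{-\lambda r} \lambda (\epsilon - r) \, dr \le \lambda^2 \epsilon^2 / 2$. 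Summing over the finitely many such edge pairs gives $\p(B \mid \cF_t) = O(\epsilon^2)$. Combining, $\p(|\cascade[t, t + \epsilon]| \ge 2 \mid \cF_t) = O(\epsilon^2) = o(\epsilon)$.

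The main obstacle is bookkeeping: we are proving Proposition~\ref{prop:fpp} and so cannot invoke it, nor can we use Lemma~\ref{lemma:infections_poisson_bound} which depends on it. Thus the memoryless/independence arguments must be carried out from first principles on the underlying product of exponentials. Once this is handled, the $O(\epsilon^2)$ bound is essentially automatic because every way of obtaining two infections requires two independent exponential clocks to each be at most $\epsilon$.
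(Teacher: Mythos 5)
Your proof is correct and arrives at the same $O(\epsilon^2)$ bound, but by a more granular route than the paper's. The paper's argument is more compressed: it defines $\tau_1, \tau_2$, the first two post-$t$ infection times, and observes that conditionally on $\cF_t$ each inter-arrival time is (by the memoryless property) a minimum of at most $n^2$ i.i.d.\ $\mathrm{Exp}(\lambda)$ residuals, hence stochastically dominates an $\mathrm{Exp}(\lambda n^2)$ random variable; the product bound then gives $\p(\tau_2 - t \le \epsilon \mid \cF_t) \le (1 - e^{-\epsilon\lambda n^2})^2 \le \lambda^2 n^4 \epsilon^2$, with no need to enumerate which edges were responsible for the two infections. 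Your approach instead classifies the two mechanisms by which a second infection can occur -- another boundary edge firing (your case (A)) or an edge out of the first newly infected vertex firing (your case (B)) -- and union-bounds over edge pairs. The case split is complete (the second infected vertex is adjacent either to $\cascade(t)$ or to the first new vertex, and nothing else), and your integral bound for (B) checks out. What you gain is explicitness about the edge-level geometry; what you pay is the extra bookkeeping you flag yourself, plus one small blemish: in (B) the edge $(v,w)$ has both endpoints outside $\cascade(t)$, so it is never a boundary edge at time $t$, and the ``(interior or boundary)'' parenthetical is misleading. Finally, note that both your proof and the paper's quietly lean on the same nontrivial fact -- that conditionally on $\cF_t$ the residual weights on boundary edges are i.i.d.\ $\mathrm{Exp}(\lambda)$ -- which, as you correctly note, cannot be taken from Proposition~\ref{prop:fpp} itself; a one-line justification of this ``clock reset'' from the underlying product of exponentials would tighten either writeup.
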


\begin{proof}
Define $\tau_1$ and $\tau_2$ to be the first and second times after $t$ when a new vertex is added to the cascade, respectively. 
Conditionally on $\cF_t$, the memoryless property of Exponential distributions implies that $\tau_1 - t$ is equal in distribution to $\mathrm{Exp}(\lambda |\cut( \cascade [0,t])| )$, which stochastically dominates a $\mathrm{Exp}( \lambda n^2 )$ random variable (here, we have used that the number of edges in $G$ is at most $n^2$). By the memoryless property of Exponential distributions, $\tau_2 - \tau_1$ is independent of $\tau_1 - t$. Through similar argument as above, it holds that $\tau_2 - \tau_1$ stochastically dominates a $\mathrm{Exp}( \lambda n^2)$ random variable as well. Consequently, the probability that at least two vertices join the cascade in the time interval $[t, t + \epsilon]$ is  
\begin{align*}
\p ( \tau_2 - t \le \epsilon \vert \cF_t ) & \le \p \left( \tau_1 - t \le \epsilon \text{ and } \tau_2 - \tau_1 \le \epsilon  \vert \cF_t \right) \\
& = \p \left( \tau_1 - t \le \epsilon \vert \cF_t \right) \p \left( \tau_2 - \tau_1 \le \epsilon \vert \cF_t \right) \\
& \le \left( 1 - e^{- \epsilon \lambda n^2 } \right)^2 \\
& \le \epsilon^2 \lambda^2 n^4 = o(\epsilon).
\end{align*}
\end{proof}

We now turn to the proof of the main result of this section. 

\begin{proof}[Proof of Proposition \ref{prop:fpp}]
We start by defining some relevant notation. Condition on $\cF_t$, and let $v \in V \setminus \cascade [0,t]$. Recalling that $\{ F(e) \}_{e \in E}$ is a collection of i.i.d. $\mathrm{Exp}(\lambda)$ random variables, we define
\begin{align*}
W & : = \min \{ F(w,v) : w \in \cascade[0,t], (w,v) \in E \} .
\end{align*}
By basic properties of Exponential distributions, we have that $W \sim \mathrm{Exp} ( \lambda | \cN(v) \cap \cascade [0,t] |)$.
Additionally define the event $\cE$ to hold if and only if at most one vertex joins the cascade in the interval $(t, t + \epsilon]$.

We claim that if $\cE$ holds, then $v \in \cascade [0, t + \epsilon] \iff W \le \epsilon$. Indeed, if $W \le \epsilon$, then from the description of $T(v)$ in the proposition statement, we have that $T(v) \le t + \epsilon$, which implies that $v \in \cascade [t, t + \epsilon]$. On the other hand, if $W > \epsilon$, then we consider two cases: (1) no vertex becomes infected in $[t, t + \epsilon]$ or (2) one vertex becomes infected in $[t, t + \epsilon]$. Note that no other case is possible if $\cE$ holds. In the first case, it holds for all $e \in \cut(\cascade [0,t])$ that $F(e) > \epsilon$, hence the weight of any path starting in $\cascade [0,t]$ and ending at $v$ is at least $\epsilon$, implying that $T(v) > t + \epsilon$. In the second case, if $v \in \cascade [t, t + \epsilon]$ despite $W > \epsilon$, there must exist a path $P \in \cP(v,t)$ of length at least 2 such that $\weight(P,t) \le \epsilon$. However, this implies the existence of another vertex $u$ on the path which is \emph{not} an element of $I[0,t]$. Since the portion of the path halting at $u$ has a smaller weight than $P$, it follows that $T(u) \in \cascade [t, t + \epsilon]$. However, it is not possible for both $u$ and $v$ to be elements of $\cascade [t, t + \epsilon]$ if $\cE$ holds, hence we must have that $T(v) > t + \epsilon$.

As a result of the argument above, we have that
\begin{align*}
\p \left( \{ v \in \cascade [0, t + \epsilon] \} \cap \cE \vert \cF_t \right) & = \p \left( \{ W \le \epsilon \} \cap \cE \vert \cF_t \right) \\
& = \p ( W \le \epsilon \vert \cF_t ) - \p \left( \{ W \le \epsilon \} \cap \cE^c \vert \cF_t \right) \\
& = \epsilon \lambda | \cN(v) \cap \cascade [0,t] | + o(\epsilon).
\end{align*}
Finally, since $\p ( \cE \vert \cF_t ) = 1 - o(\epsilon)$ by Lemma \ref{lemma:cascade_multiple_agents}, it holds that 
\[
\p \left( v \in \cascade [0, t + \epsilon] \vert \cF_t \right) = \epsilon \lambda | \cN(v) \cap \cascade [0,t] | + o(\epsilon),
\]
hence \eqref{eq:SI} is satisfied.
\end{proof}


{\small
\bibliographystyle{abbrv}
\bibliography{references}
}

\end{document}